\numberwithin{equation}{section}
\numberwithin{figure}{section}
\theoremstyle{plain}
\newtheorem{thm}{Theorem}[section]
\theoremstyle{definition}
\newtheorem{defn}[thm]{Definition}
\theoremstyle{plain}
\newtheorem{prop}[thm]{Proposition}
\theoremstyle{remark}
\newtheorem{rem}[thm]{Remark}
\theoremstyle{plain}
\newtheorem{cor}[thm]{Corollary}
\theoremstyle{plain}
\newtheorem{lem}[thm]{Lemma}
\theoremstyle{remark}
\newtheorem*{rem*}{Remark}
\newcommand{\abs}[1]{\left\vert#1\right\vert}
\newcommand{\set}[1]{\left\{#1\right\}}
\newcommand{\pset}[1]{\left(#1\right)}
\newcommand{\bset}[1]{\left\langle #1 \right\rangle}
\newcommand{\sqset}[1]{\left[ #1\right]}
\newcommand{\Real}{\mathbb{R}}
\newcommand{\eps}{\varepsilon}
\newcommand{\sph}{\mathbb{S}}
\newcommand{\To}{\rightarrow}
\newcommand{\pd}{\partial}
\newcommand{\PRT}{\mathscr{P}(R_T)}
\newcommand{\SRT}{\mathscr{S}(R_T)}
\newcommand{\bRT}{\bar R_T}
\newcommand{\POmega}{\mathscr{P}\Omega}
\DeclareMathOperator{\diam}{diam}
\DeclareMathOperator{\Erf}{Erf}
\DeclareMathOperator{\sgn}{sgn}
\newcommand{\qand}{\quad\text{and}\quad}
\title[Fundamental gap in spheres]{Fundamental Gap of Convex Domains \\ in the Spheres \\
{\tiny 	--- with Appendix\,B by Qi S. Zhang }}
\author{Chenxu He}
\address{Department of Mathematics, University of California, Riverside, CA 92521}
\email{chenxuhe@math.ucr.edu}
\urladdr{https://sites.google.com/site/hechenxu/home}
\author{Guofang Wei}
\address{Department of Mathematics, University of California, Santa Barbara, CA 93106}
\email{wei@math.ucsb.edu}
\urladdr{http://web.math.ucsb.edu/~wei/}
\thanks{G. Wei is partially supported by NSF DMS 1506393}
\keywords{parabolic equation, eigenvalue estimate, spectral gap}
\subjclass[2000]{35P15, 35K20, 58J35, 53C35}
\begin{document}
\maketitle

\begin{abstract}
In \cite{SetoWangWei}, S. Seto, L. Wang and G. Wei proved that the gap between the first two Dirichlet eigenvalues of a convex domain in the unit sphere is at least as large as that for an associated operator on an interval with the same diameter, provided that the domain has the diameter at most $\pi/2$. In this paper, we extend Seto-Wang-Wei's result to convex domains in the unit sphere with diameter less than $\pi$. 
\end{abstract}

\section{Introduction}

Let $(M^n, g)$ be a Riemannian manifold, and $\Omega \subset M$ a bounded convex domain with the diameter $D$. Consider the Laplace operator $\Delta$ of $\Omega$ with the Dirichlet boundary condition. It has an increasing sequence of eigenvalues $0 <\lambda_0 < \lambda_1 \leq \lambda_2 \leq \cdots$, and corresponding eigenfunctions $\set{\phi_i}_{i\geq 0}$ which vanish on $\pd \Omega$ and satisfy the equation
\begin{equation*}
\Delta \phi_i + \lambda_i \phi_i = 0.
\end{equation*}
 The difference between the first two eigenvalues, $\lambda_1 - \lambda_0$, is called the \emph{fundamental gap}. Similarly, the fundamental gap is also defined for the Schr\"odinger operator $-\Delta + V$, where $V$ is a potential.
 
 When the Riemannian manifold $M^n$ is the Euclidean space $\Real^n$ with the flat metric, in their celebrated work \cite{AndrewsClutterbuck}, B. Andrews and J. Clutterbuck proved  the sharp lower bound $\lambda_1 - \lambda_0 \geq \frac{3\pi^2}{D^2}$, and thus resolved the fundamental gap conjecture which was independently proposed by M. van den Berg \cite{vandenBerg}, Ashbaugh-Benguria \cite{AshbaughBenguria} and Yau \cite{Yau} in the 80's. They proved this optimal lower bound by establishing a sharp log-concavity estimate for the first eigenfunction. Later, L. Ni \cite{Ni} gave an alternative proof of Andrews-Clutterbuck's results, using the maximum principle of elliptic equations. 

When the Riemannian manifold $M^n$ is the round sphere $\sph^n$, S. Seto, L. Wang and G. Wei have made substantial progress in their very recent preprint \cite{SetoWangWei}, by extending the work in \cite{AndrewsClutterbuck} to the spherical case. In particular, they obtained the lower bound $\frac{3\pi^2}{D^2}$(for $n \geq 3$) which is the same as the Euclidean case. From the geometric point of view, the analysis shouldn't change much before the domain reaches the equator of the sphere. So it is natural to expect that Seto-Wang-Wei's results should hold on all convex domains in the hemisphere. In this paper, we confirm this expectation. Below, $\sph^n(1)$ stands for the round sphere with radius one.

\begin{thm}\label{thm:gapcomparison}
Suppose $\Omega \subset \sph^n(1)$ is a strictly convex domain with the diameter $D< \pi$. Let $0< \lambda_0 < \lambda_1$ be the first two eigenvalues of the Laplacian on $\Omega$ with Dirichlet boundary condition. Then we have
\begin{equation}\label{eqn:gapcomparison}
\lambda_1 - \lambda_0 \geq \mu_1(n, D) - \mu_0(n, D)
\end{equation}
where $\mu_i(n, D)$($i=0,1$) are the first two eigenvalues of the Sturm-Liouville operator 
\begin{equation}\label{eqn:SLonedim}
\frac{d^2}{ds^2} - (n-1) \tan(s) \frac{d}{ds} \quad \text{for } s\in\sqset{- D/2, D/2}
\end{equation}
with Dirichlet boundary condition. Furthermore, if $n\geq 3$, then 
\[
\mu_1(n, D) - \mu_0(n, D) \geq 3 \frac{\pi^2}{D^2}. 
\]
\end{thm}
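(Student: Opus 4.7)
My plan is to follow the two-point maximum principle approach of Andrews--Clutterbuck \cite{AndrewsClutterbuck} and Seto--Wang--Wei \cite{SetoWangWei}, and to pinpoint the new ingredients needed to reach the full range $\pi/2 \le D < \pi$. The operator in \eqref{eqn:SLonedim} is the radial Laplacian on $\sph^n$ in the symmetric equatorial slicing, self-adjoint with respect to the log-concave weight $\cos^{n-1}(s)$; let $\bar\phi_0 > 0$ and $\bar\phi_1$ denote its first two Dirichlet eigenfunctions on $\sqset{-D/2, D/2}$. As in the cited works, \eqref{eqn:gapcomparison} reduces to two assertions: (i) a sharp log-concavity comparison $\Hess(\log\phi_0)(T,T) \le (\log\bar\phi_0)''(s)$ along any minimizing geodesic in $\Omega$ of length $2s$ with unit tangent $T$, and (ii) a modulus of continuity comparison for $\phi_1/\phi_0$ against $\bar\phi_1/\bar\phi_0$ driven by (i).

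I would prove (i) via the parabolic two-point maximum principle applied to
\[
Z(x,y,t) = \log\phi_0(y,t) - \log\phi_0(x,t) - 2(\log\bar\phi_0)'(d(x,y)/2),
\]
where $\phi_0$ is deformed through the heat flow. An interior first contact is ruled out by the second variation of arclength along the minimizing geodesic $\gamma$ connecting the contact pair: the sectional curvature $+1$ of $\sph^n$ enters through the Jacobi equation $J'' + J = 0$, and the $-(n-1)\tan(s)$ drift of \eqref{eqn:SLonedim} appears as the trace of the transverse Hessian. A boundary first contact is excluded by the strict convexity of $\Omega$ together with a Hopf-type bound on $\nabla\log\phi_0$ at $\pd\Omega$. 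Step (ii) then follows by applying the same scheme to $w = \phi_1/\phi_0$ and the weighted operator $\Delta + 2\nabla\log\phi_0 \cdot \nabla$, with the bound from (i) precisely what is needed to dominate the tangential Hessian along $\gamma$ by the 1D model.

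The central obstacle is carrying (i) through for $2s > \pi/2$. In \cite{SetoWangWei} the sign structure of the second-variation computation rests on $2s \le \pi/2$; beyond that threshold the Jacobi factors change monotonicity and $\tan(s)$ becomes large. I expect to resolve this by (a) exploiting strict convexity to confine all relevant minimizing geodesics to a fixed compact subset of the interior of $\Omega$; (b) establishing the boundary Hopf bound needed to exclude a boundary first contact, which is a delicate PDE estimate on $\log\phi_0$ near $\pd\Omega$ (plausibly the subject of Zhang's appendix); and (c) carrying out the second variation in its full Jacobi form for geodesic lengths up to $\pi$, then absorbing the extra terms via the Riccati identity $(\log\bar\phi_0)'' + ((\log\bar\phi_0)')^2 = (n-1)\tan(s)(\log\bar\phi_0)' - \mu_0$ that follows directly from \eqref{eqn:SLonedim}. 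This is where the genuinely new analysis beyond the hemispherical case must enter.

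The remaining inequality $\mu_1(n,D) - \mu_0(n,D) \ge 3\pi^2/D^2$ for $n \ge 3$ is a one-dimensional Sturm--Liouville estimate, independent of the multidimensional argument above. I would make the substitution $\bar\phi_1 = \bar\phi_0 f$ to convert the gap problem into a 1D Schr\"odinger eigenvalue problem for $f$ whose effective potential picks up a convex correction from the spherical drift, and then invoke a Lavine-type 1D gap bound on $\sqset{-D/2, D/2}$. The dimensional restriction $n \ge 3$ is exactly the threshold at which this convex correction dominates the loss arising from $D/2$ being allowed to approach $\pi/2$.
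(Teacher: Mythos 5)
Your top-level decomposition is the same as the paper's: Theorem \ref{thm:gapcomparison} is reduced to the two-point log-concavity estimate (Theorem \ref{thm:logconcavity}), which together with the already-established implication ``modulus of log-concavity $\Rightarrow$ gap comparison'' (\cite[Theorem 4.1]{SetoWangWei}) and the one-dimensional Sturm--Liouville gap bound gives the result. That part is fine, and the 1D part you sketch (the substitution $\bar\phi_1 = \bar\phi_0 f$ and a single-well gap estimate in the Lavine spirit) is also in the right ballpark; the paper simply cites \cite{SetoWangWei} for it.

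The gap is in how you propose to prove the log-concavity estimate (your step (i)) for $D > \pi/2$. You suggest re-running the two-point maximum principle with a full Jacobi-field second-variation computation up to geodesic length $\pi$, absorbing the extra terms via the Riccati identity, and patching the boundary case with a Hopf-type estimate. Precisely this route -- an elliptic two-point argument directly comparing against $(\log\tilde\phi_0)'$ -- is \cite[Theorem 3.8]{SetoWangWei}, and it is where the $D\le\pi/2$ restriction comes from: the hypothesis needed on the modulus function fails once the domain crosses the equator, and I do not see how your absorption scheme removes it. What the paper actually does is switch to the \emph{parabolic} preservation theorem \cite[Theorem 3.2]{SetoWangWei} (quoted here as Theorem \ref{thm:SWWpsiPDE}), which holds for all $D<\pi$ with no extra hypothesis on $\psi$, and then carries out Andrews--Clutterbuck's scheme of \emph{evolving the one-dimensional modulus}: (a) construct a rough initial modulus of log-concavity $\psi_{k,0}$ (Section 3), (b) solve the 1D quasilinear parabolic equation (\ref{eqn:psipdemu0}) with this initial data -- new work is needed here because of the $u^2$ nonlinearity and the curvature terms, which is what Appendix A handles -- and (c) show that the flowed modulus $\psi_k(\cdot,t)$ converges to the optimal $(\log\tilde\phi_0)'$ as $t,k\to\infty$ (Section 5). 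Note also that the quantity flowed by heat flow is the 1D modulus $\psi(z,t)$, not $\log\phi_0$ (whose heat evolution is trivially $\log\phi_0 - \lambda_0 t$). Finally, Zhang's appendix is not a boundary Hopf bound: it is a heat-kernel proof of the monotonicity $\partial_t \psi_k < 0$ (Lemma \ref{lem:psikt<0}) used in the convergence step (c). Your proposal, as written, does not contain the parabolic-modulus mechanism that is the actual engine of the extension to $D<\pi$.
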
  
An immediate consequence is the following 
\begin{cor}\label{cor:gap3pisq}
Suppose $\Omega \subset \sph^n(1)$($n \geq 3$) is a strictly convex domain with the diameter $D < \pi$. Let $0< \lambda_0 < \lambda_1$ be the first two eigenvalues of the Laplacian on $\Omega$ with Dirichlet boundary condition. Then we have
\begin{equation}\label{eqn:gap3pisq}
\lambda_1 - \lambda_0 \geq 3\frac{\pi^2}{D^2}. 
\end{equation}
\end{cor}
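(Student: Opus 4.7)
The plan is to derive the corollary directly from Theorem~\ref{thm:gapcomparison}, with essentially no additional work required. First, I would verify that the hypotheses on $\Omega$ assumed in Corollary~\ref{cor:gap3pisq} (strictly convex, $D < \pi$) are exactly those required by Theorem~\ref{thm:gapcomparison}, so that the theorem applies. Invoking it then gives the chain
\[
\lambda_1 - \lambda_0 \;\geq\; \mu_1(n,D) - \mu_0(n,D) \;\geq\; 3\frac{\pi^2}{D^2},
\]
where the first inequality is the main gap-comparison statement of the theorem and the second is the explicit lower bound on the one-dimensional model spectrum, which is activated precisely by the extra hypothesis $n \geq 3$ carried in the corollary. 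Composing the two inequalities yields \eqref{eqn:gap3pisq}.

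Because this is a purely formal consequence, I do not foresee any genuine obstacle at the level of the corollary itself; I would not introduce any new machinery here. For the reader's orientation I would mention that all of the substantive difficulty has already been absorbed into Theorem~\ref{thm:gapcomparison}, which rests on two independent pieces: (a) a log-concavity / two-point modulus comparison between the first eigenfunction of $\Omega \subset \sph^n(1)$ and the corresponding eigenfunction of the one-dimensional Sturm--Liouville operator $\frac{d^2}{ds^2} - (n-1)\tan(s)\frac{d}{ds}$ on $[-D/2,D/2]$, extending the Seto--Wang--Wei bound from $D \leq \pi/2$ to $D < \pi$; and (b) a direct ODE analysis of this model problem showing the separation $\mu_1(n,D) - \mu_0(n,D) \geq 3\pi^2/D^2$ once $n \geq 3$. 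The corollary is then a one-line consequence of these two facts.
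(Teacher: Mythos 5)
Your proposal is correct and follows exactly the paper's route: the paper introduces the corollary with ``An immediate consequence is the following,'' meaning it is obtained by chaining the two inequalities of Theorem~\ref{thm:gapcomparison} precisely as you describe. No further commentary is needed.
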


\begin{rem}
\begin{enumerate}[(a)]
\item The comparison of gaps in (\ref{eqn:gapcomparison}) and the inequality in (\ref{eqn:gap3pisq}) were proved for $D \leq \frac{\pi}{2}$ in \cite{SetoWangWei}.

\item The same estimates hold for Schr\"{o}dinger operator of the form $-\Delta + V$, where $V$ is a non-negative convex potential. 
\end{enumerate}
\end{rem}
\begin{rem}
Previously, some weaker lower bounds were obtained by gradient estimate method: Lee-Wang \cite{LeeWang} showed a lower bound $\frac{\pi^2}{D^2}$. In \cite{LingMMJ,LingCAG}, J. Ling improved Lee-Wang's result to $\lambda_1 - \lambda_0 >\frac{\pi^2}{D^2}$  by using Yu-Zhong's work \cite{YuZhong}. 
\end{rem}
We obtain Theorem \ref{thm:gapcomparison} by extending the log-concavity estimate of the first eigenfunction in \cite{SetoWangWei} to convex domains with diameter $D < \pi$. 

\begin{thm}\label{thm:logconcavity}
Suppose $\Omega \subset \sph^n(1)$ is a strictly convex domain with diameter $D < \pi$, and $\phi_0$ is a first eigenfunction of the Laplacian on $\Omega$ with Dirichlet boundary condition. Then for any $x, y \in \Omega$ with $x\ne y$, we have 
\begin{equation}\label{eqn:logconcavity}
\bset{\nabla \log \phi_0(y), \gamma'(d/2)} - \bset{\nabla \log \phi_0(x), \gamma'(-d/2)} \leq 2\pset{\log\tilde\phi_0}' \pset{\frac{d(x,y)}{2}} 
\end{equation}
where $d=d(x,y)$, $\gamma$ is the normal minimizing geodesic with $\gamma(-d/2) = x$ and $\gamma(d/2) = y$, and $\tilde\phi_0 > 0 $ is a first eigenfunction of the Sturm-Liouville operator in (\ref{eqn:SLonedim}) with Dirichlet boundary condition. 
\end{thm}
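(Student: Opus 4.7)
The plan is to follow the two-point parabolic maximum principle strategy of \cite{SetoWangWei}, modelled on Andrews--Clutterbuck \cite{AndrewsClutterbuck} and Ni \cite{Ni}, and push it through the extra regime $\pi/2<D<\pi$. I would evolve a positive smooth initial datum by the heat equation $u_t=\Delta u$ on $\Omega$ with Dirichlet data; since $e^{\lambda_0 t}u(\cdot,t)$ converges to a positive multiple of $\phi_0$ as $t\to\infty$, any bound on $\nabla\log u$ uniform in $t$ transfers to $\nabla\log\phi_0$. Writing $v:=\log u$ and letting $\tilde v(s,t):=\log\tilde\phi_0(s)-\mu_0 t$ be the corresponding log-solution of the one-dimensional heat equation attached to (\ref{eqn:SLonedim}), the natural test quantity on $\set{(x,y,t):x,y\in\Omega,\,x\neq y,\,t\geq 0}$ is
\[
Z(x,y,t):=\bset{\nabla v(y,t),\gamma'(d/2)}-\bset{\nabla v(x,t),\gamma'(-d/2)}-2\tilde v'(d(x,y)/2),
\]
where $\gamma$ is the unit-speed minimizing geodesic from $x$ to $y$ and $d=d(x,y)$. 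Inequality (\ref{eqn:logconcavity}) amounts to $Z\leq 0$ in the limit $t\to\infty$; after a standard initial approximation this reduces to ruling out a first interior maximum of $Z$ at which $Z=0$.

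Suppose such a maximum occurs at $(x_0,y_0,t_0)$ with $x_0\neq y_0$. Using Bochner's identity $v_t=\Delta v+|\nabla v|^2$, I would compute $(\pd_t-\Delta_x-\Delta_y)Z$ in a parallel orthonormal frame $\set{e_1=\gamma',e_2,\ldots,e_n}$ along $\gamma$. The longitudinal $e_1$-variation reproduces the one-dimensional operator $\pd_s^2-(n-1)\tan(s)\pd_s$ acting on $\tilde v$, while the transverse directions $e_j$ ($j\geq 2$) are treated via spherical Jacobi fields $J_j(s)=\tfrac{\cos s}{\cos(d/2)}E_j(s)$: their second variation of arc length produces a curvature contribution proportional to $-(n-1)\tan(d/2)$ times the longitudinal parts of $\nabla v$. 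Inserting the Riccati identity
\[
\tilde v''(s)+\pset{\tilde v'(s)}^2-(n-1)\tan(s)\tilde v'(s)+\mu_0=0
\]
makes the one-dimensional model terms cancel, so the remaining inequality is a pointwise relation among $|\nabla v|^2$ at $x$ and $y$, $(\tilde v'(d/2))^2$, and the criticality conditions $\nabla_{(x,y)}Z=0$ and $\Hess_{(x,y)}Z\leq 0$.

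The principal obstacle is the sign of the curvature factor $-(n-1)\tan(d/2)$ once $d/2>\pi/4$: under the assumption $D\leq\pi/2$ used in \cite{SetoWangWei} it is bounded and the remaining terms easily dominate, but it blows up as $D\to\pi$. Two routes appear promising. First, refine the transverse Jacobi fields by replacing $\cos(s)$ with a weight $\varphi(s)$ tied to the one-dimensional eigenfunction itself, so that the second-variation comparison saturates the model; this is reminiscent of how the weight $\cos(s)^{n-1}$ makes the operator (\ref{eqn:SLonedim}) self-adjoint. Second, perturb the model by $\tilde\phi_0\mapsto\tilde\phi_0+\eps\psi$ with a positive $\psi$ chosen so that the Riccati relation becomes strict on $(-D/2,D/2)$, run the two-point argument as a strong maximum principle, and let $\eps\to 0$. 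Boundary maxima are ruled out because $v\to-\infty$ on $\pd\Omega$ while $\tilde v'$ remains bounded on compact subsets of $(-D/2,D/2)$, and the degenerate limit $x=y$ is handled by Taylor expansion, which produces the compatible pointwise Hessian inequality at $x$. Combining these ingredients gives $Z\leq 0$ along the flow, and letting $t\to\infty$ delivers (\ref{eqn:logconcavity}).
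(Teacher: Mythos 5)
Your proposal diagnoses the wrong obstacle and, more importantly, lacks the mechanism that actually finishes the proof.  The two–point parabolic preservation theorem of Seto--Wang--Wei (their Theorem~3.2, quoted here as Theorem~\ref{thm:SWWpsiPDE}) already holds for all $D<\pi$; the factor $-(n-1)\tan(d/2)$ blowing up as $D\to\pi$ is not what restricted their result to $D\le\pi/2$.  The restriction came from their \emph{elliptic} version (their Theorem~3.8), which needs an extra hypothesis on the modulus function that fails once $D>\pi/2$.  So the two routes you sketch --- reweighting the transverse Jacobi fields, or perturbing $\tilde\phi_0$ to make the Riccati identity strict --- are aimed at a difficulty that is not there for the parabolic version and, as stated, do nothing to supply the missing hypothesis in the elliptic version.

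The deeper problem in your plan is structural.  You fix the comparison function $\tilde v'=(\log\tilde\phi_0)'$ once and for all and then run a two–point maximum principle on $Z$ along the heat flow $u_t=\Delta u$.  But a maximum principle only \emph{preserves} $Z\le 0$;
it gives no mechanism for $Z$ to \emph{improve} to $\le 0$ if it is positive at $t=0$.  For $Z\le 0$ to hold initially one would already need $(\log\tilde\phi_0)'$ to be a modulus of log-concavity for $\log u(\cdot,0)$--- which is exactly the statement being proved.  Writing off this step as ``a standard initial approximation'' hides the entire content of the argument.  The paper's route is different:  one does not fix the modulus, one \emph{evolves} it.  Concretely, a crude Lipschitz modulus $\psi_{k,0}$ with $\psi_{k,0}(0)=0$, $\psi_{k,0}(D/2)=-k$ is constructed in Section~3 (using a Robin eigenfunction $\tilde\phi_{0,\varepsilon}$ and Riccati comparisons, as in Andrews--Clutterbuck's Corollary~4.4); then the semilinear parabolic equation~(\ref{eqn:psipdemu0}) for the modulus is solved with this initial datum (Section~4 and Appendix~A --- nontrivial because of the quadratic term in $\psi$ and the merely Lipschitz initial data); Theorem~\ref{thm:SWWpsiPDE} guarantees that each $\psi_k(\cdot,t)$ stays a modulus for $\phi_0$; finally $\partial_t\psi_k\le 0$ and the comparison with the unique stationary solution $\tilde\psi_{k,0}=(\log\tilde\phi_{0,1/k})'$ give $\psi_k(\cdot,t)\to\tilde\psi_{k,0}$ as $t\to\infty$, and $\tilde\psi_{k,0}\to(\log\tilde\phi_0)'$ as $k\to\infty$ (Section~5).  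None of these three steps appears in your outline, and without the modulus equation there is no way to produce the sharp comparison $(\log\tilde\phi_0)'$ in the limit.

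A smaller but real point: you claim $\tilde v'$ ``remains bounded on compact subsets of $(-D/2,D/2)$,'' but $(\log\tilde\phi_0)'(\pm D/2)=\mp\infty$, and the boundary behavior is exactly why the paper works with the finite approximations $\psi_{k,0}(D/2)=-k$ and only lets $k\to\infty$ at the end.
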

\begin{rem}
The log-concavity estimate of the first eigenfunction in (\ref{eqn:logconcavity}) was proved for convex domains with diameter $D\leq \frac{\pi}{2}$ in \cite{SetoWangWei}.
\end{rem}

The connection between the log-concavity estimate of the first eigenfunction and the lower bound of the fundamental gap originates from the landmark work in \cite{SingerWongYauYau}, where they derived the lower bound $\frac{\pi^2}{4D^2}$ in the Euclidean case, from the fact that the first eigenfunction is log-concave. The improved relation has been established in  \cite[Proposition 3.2]{AndrewsClutterbuck} in the Euclidean case, and \cite[Theorem 4.1]{SetoWangWei} for Riemannian manifold with a lower Ricci curvature bound. More precisely, they proved that the estimate in (\ref{eqn:logconcavity}) implies the comparison in (\ref{eqn:gapcomparison}) for bounded convex domains. So Theorem \ref{thm:gapcomparison} follows directly from Theorem \ref{thm:logconcavity} by applying Theorem 4.1 in \cite{SetoWangWei}.  

One of the key ingredients in our proof of Theorem \ref{thm:logconcavity} is the preservation of initial modulus in \cite[Theorem 3.2]{SetoWangWei}, see Theorem \ref{thm:SWWpsiPDE}. To show the log-concavity estimate in (\ref{eqn:logconcavity}), we follow the original strategy by Andrews-Clutterbuck in \cite{AndrewsClutterbuck} in the Euclidean case: We construct a rough initial data of modulus of concavity for $\log \phi_0$. Then the semi-linear parabolic equation in \cite[Theorem 3.2]{SetoWangWei} improves it to the one with $\tilde\phi_0$ in (\ref{eqn:logconcavity}). In \cite{SetoWangWei} the log-concavity estimate is proved using their Theorem 3.8, the elliptic version of their Theorem 3.2. While the proofs of Theorems 3.2 and 3.8 in \cite{SetoWangWei} are almost the same, the elliptic version gives log-concavity estimate directly, but needs additional assumption on the modulus function which leads to the restriction of diameter $\le \frac{\pi}{2}$. For the parabolic version, one doesn't have the additional assumption, but needs an initial modulus function and dedicated study of the parabolic equation in \cite[Theorem 3.2]{SetoWangWei}. 

Since the sphere $\sph^n$ has positive curvature, unlike the Euclidean space, the non-trivial curvature terms come into the parabolic equation. So the equation that we study in this paper is more involved than the one in Andrews-Clutterbuck's work \cite{AndrewsClutterbuck}. For example, our equation (\ref{eqn:uPDE}) contains the nonlinear term $u^2$ for the unknown function $u$, and the standard existence results, e.g., Chapter XII in \cite{Lieberman}, do not apply, even for a smooth initial-boundary data. In Appendix A, we prove the existence of classical solution when the initial-boundary data is smooth. Using an approximation argument, the parabolic equation (\ref{eqn:uPDE}) with continuous initial-boundary data has a classical solution in the interior. For the continuity of the solution at the initial time and to the spatial boundary, one has to estimate the solution's modulus of continuity. Similar estimates of other quasi-linear parabolic equations, e.g., graphical mean curvature flow and its anisotropic analogues, have been developed in \cite{AndrewsClutterbuckLipschitz,AndrewsClutterbuckTime}. 

We refer to the excellent survey \cite{Ashbaugh}(up to 2006) and the celebrated work of Andrews-Clutterbuck \cite{AndrewsClutterbuck}, for the importance of the fundamental gap and the long history of this problem. We also encourage interested reader to look at the survey \cite{Andrews} by Andrews for difficult but interesting problems in this area. 

\smallskip

The paper is organized as follows: In Section 2, we collect the preliminaries, and offer a brief outline of our proof of Theorem \ref{thm:logconcavity}. In Section 3, we construct the initial data of the modulus of log-concavity of the first eigenfunction $\phi_0$. In Section 4, we study the parabolic equation of the modulus of log-concavity with the initial data given in section 3. In Section 5, we prove Theorem \ref{thm:logconcavity}. Appendix A is for the existence of the solution to the parabolic equation, when the initial data is given by the approximating smooth function.  Appendix B is the proof of Lemma \ref{lem:psikt<0}, written by Qi S. Zhang. 

\medskip
\textbf{Acknowledgement}:  The authors would like to express gratitude to Qi S. Zhang for his help on parabolic partial differential equations, especially for the proof of Lemma \ref{lem:psikt<0} using the heat kernel. Later on, we found a proof using maximum principle. We also want to thank Ben Andrews for clarifying the proof of Corollary 4.4 in \cite{AndrewsClutterbuck}.   

\medskip

\section{Preliminaries and outline of proof of Theorem \ref{thm:logconcavity}}

In this section we set up the notations and recall a few relevant results from \cite{AndrewsClutterbuck} and \cite{SetoWangWei}. In the second part, we describe a brief outline of our proof of Theorem \ref{thm:logconcavity}. The details will be carried out in the later sections.

\subsection{Preliminaries}
A function $f$ is called \emph{semi-convex}, if $f(x) + c \, d(x_0, x)^2$ is convex for some $c$, where $d(x_0, \cdot)$ is the distance from a fixed reference point $x_0$.  
\begin{defn}
\begin{enumerate}[(a)]
\item Given a semi-convex function $f$ on a domain $\Omega$. A function $\psi : [0, \infty) \To \Real$ is called a  \emph{modulus of concavity} for $f$, if for every $x\ne y $ in $\Omega$, we have
\begin{equation}
\bset{\nabla f(y), \gamma'(d/2)} - \bset{\nabla f(x), \gamma'(-d/2)} \leq 2 \psi\pset{\frac{d(x,y)}{2}},
\end{equation}
where $\gamma$ is a normal minimizing geodesic with $\gamma(-d/2) = x$, $\gamma(d/2) = y$ and $d = d(x, y)$. 
\item The function $\psi$ is called a \emph{modulus of log-concavity} for a positive function $\phi$ on $\Omega$, if it is a modulus of concavity of $\log \phi$. 
\end{enumerate}
\end{defn}

In this paper, we also use ${\,}'$ and ${\,}{''}$ to denote the spatial derivatives, e.g., $\psi' = \frac{\pd}{\pd z}\psi$ and $\psi'' = \frac{\pd^2}{\pd z^2}\psi$ for a function $\psi = \psi(z, t)$. Recall Theorem 3.2 in \cite{SetoWangWei} for the unit round sphere, i.e., $K =1$, which plays an important role in proving Theorem~\ref{thm:logconcavity}. 
\begin{thm}[Theorem 3.2, \cite{SetoWangWei}]\label{thm:SWWpsiPDE}
Suppose $\Omega \subset \sph^n(1)$ is a uniformly convex domain with diameter $D  < \pi$. Let $\phi_0$ be a positive eigenfunction of the Laplacian on $\Omega$ with Dirichlet boundary condition, associated to the first eigenvalue $\lambda_0 > 0$, and 
\[
\begin{array}{rl}
 u : &  \Omega \times [0, \infty) \To \Real \\
 & u(x, t) = e^{-\lambda_0 t} \phi_0(x).
\end{array}
\]
Suppose $\psi_0 : [0, D/2] \To \Real$ is a Lipschitz continuous modulus of concavity for $\log \phi_0$. Let the function 
\[
\psi \in C\pset{[0, D/2] \times [0, \infty)}\cap C^\infty\pset{[0, D/2] \times (0, \infty)}
\] 
be a solution of 
\begin{equation}\label{eqn:psipdelambda0}
\left\{
\begin{array}{rcl}
\medskip 
\dfrac{\pd}{\pd t}\psi(z, t) & \geq & \psi''(z,t) + 2\psi'(z, t) \psi(z,t) \\
\medskip
& & - \tan(z) \big{[}(n+1)\psi'(z,t) + 2\lambda_0 + 2\psi^2(z,t)\big{]} \\
\medskip
& &- (n-1)(1-\tan^2(z))\psi(z, t) \\
\psi(\cdot, 0) & = & \psi_0(\cdot) \\
\psi(0, t) & = & 0.
\end{array}
\right.
\end{equation}
Then $\psi(\cdot, t)$ is a modulus of concavity for $\log u(\cdot, t)$ for each $t \geq 0$. 
\end{thm}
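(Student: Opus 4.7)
My plan is to prove this by the two-point (doubling) maximum principle, adapting the Andrews--Clutterbuck strategy to the sphere. Writing $w(x,t)=\log u(x,t)=-\lambda_0 t+\log\phi_0(x)$, for $x\ne y$ in $\Omega$ I define
$$Q(x,y,t)=\bset{\nabla w(y,t),\gamma'(d/2)}-\bset{\nabla w(x,t),\gamma'(-d/2)}-2\psi\pset{d(x,y)/2,\,t},$$
where $\gamma$ is the normal minimizing geodesic from $x$ to $y$ (unique since $\Omega$ is convex and $D<\pi$). The conclusion is equivalent to $Q\le 0$ everywhere, and this holds at $t=0$ by hypothesis. The decisive simplification from the ansatz $u=e^{-\lambda_0 t}\phi_0$ is that $\nabla w$ is independent of $t$, so
$$\partial_t Q(x,y,t)=-2\,\partial_t\psi\pset{d(x,y)/2,t}.$$
It therefore suffices to show that at any first contact point where $Q=0$ the spatial Hessian analysis of $Q$ combined with the eigenvalue equation for $\phi_0$ forces the right-hand side of \eqref{eqn:psipdelambda0} to be non-negative, which gives $\partial_t\psi\ge 0$; a standard perturbation $\psi\mapsto\psi+\eps t$ upgrades this to a strict contradiction of the maximum-in-time condition $\partial_t Q\ge 0$.

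First I would ensure the contact occurs at an interior point with $x\ne y$. The diagonal $x=y$ is excluded by $\psi(0,t)=0$ together with smoothness of $\nabla w$ in the interior. The spatial boundary is handled by the Hopf lemma applied to $\phi_0$: on a strictly convex domain each normal minimizing geodesic exits $\partial\Omega$ with a positive component along the outward conormal, so $\bset{\nabla w(y,t),\gamma'(d/2)}\to-\infty$ as $y\to\partial\Omega$ and symmetrically for $x$; since $\psi$ is continuous on $[0,D/2]\times[0,\infty)$ and $d(x,y)\le D<\pi$ keeps $\tan(d/2)$ bounded, this forces $Q\to-\infty$ near the parabolic boundary.

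At an interior first contact $(x_0,y_0,t_0)$ with $d_0=d(x_0,y_0)>0$, let $\gamma_0$ connect $x_0$ to $y_0$ and let $\{E_i\}_{i=0}^{n-1}$ be a parallel orthonormal frame along $\gamma_0$ with $E_0=\gamma_0'$. I would test $\Hess Q\le 0$ against two families of variations. First, the symmetric longitudinal variation $x(s)=\exp_{x_0}(-sE_0(-d_0/2))$, $y(s)=\exp_{y_0}(sE_0(d_0/2))$ satisfies $d(x(s),y(s))=d_0+2s$, whose second variation contributes the $\psi''$ term and the $\Hess w(\gamma',\gamma')$ terms at $x_0,y_0$. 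Second, for each $1\le i\le n-1$, the orthogonal parallel variation $x(s)=\exp_{x_0}(sE_i(-d_0/2))$, $y(s)=\exp_{y_0}(sE_i(d_0/2))$ is interpolated along $\gamma_0$ by the Jacobi field $\sec(d_0/2)\cos(s)E_i(s)$, whose index form on $\sph^n(1)$ evaluates to $-2\tan(d_0/2)$; this is the source of the $\tan(z)$ and $1-\tan^2(z)$ factors appearing in \eqref{eqn:psipdelambda0}. Summing these $n$ variations assembles $\Delta w(x_0)+\Delta w(y_0)$, which the eigenvalue identity $\Delta\log\phi_0=-\lambda_0-|\nabla\log\phi_0|^2$ converts into $-2\lambda_0-|\nabla w(x_0)|^2-|\nabla w(y_0)|^2$. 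The first-order spatial conditions $\nabla_{x,y}Q=0$ at the maximum identify the components of $\nabla w$ along $\gamma_0'$ at $x_0,y_0$ with $\psi'(d_0/2,t_0)$ and equate the transverse components at the two endpoints, converting the remaining $|\nabla w|^2$ contributions into $\psi^2$ terms.

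Collecting everything produces, at $(d_0/2,t_0)$,
$$\psi''+2\psi'\psi-\tan(z)\sqset{(n+1)\psi'+2\lambda_0+2\psi^2}-(n-1)(1-\tan^2(z))\psi\;\geq\;0,$$
which by \eqref{eqn:psipdelambda0} and the $\eps$-perturbation contradicts $\partial_t Q\ge 0$. The hard part will be the spherical second-variation bookkeeping: extracting the precise coefficients $(n+1)\tan(z)$ and $(n-1)(1-\tan^2(z))$ requires tracking how the geodesic endpoint tangents $\gamma'_s(\pm d_s/2)$ tilt under both longitudinal and orthogonal variations, and pairing those tilts with $\nabla w$ through the first-order max conditions; the nonparallel Jacobi field $\sec(d_0/2)\cos(s)E_i$ is exactly what replaces a would-be constant Ricci term by the explicit trigonometric coefficients. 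The secondary technical issue is justifying the interior-maximum reduction when $\psi_0$ is only Lipschitz and $\psi$ is merely continuous at $t=0$, which should follow by smooth approximation of $\psi_0$, the existence/regularity theory for $\psi$ provided by Appendix A, and a limiting argument.
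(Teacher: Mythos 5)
This theorem is quoted directly from \cite{SetoWangWei} (their Theorem~3.2); the present paper supplies no proof of its own, so there is nothing internal to compare against. What you have sketched is precisely the Andrews--Clutterbuck two-point maximum principle argument that Seto--Wang--Wei adapt to the sphere, and the structural pieces are right: the observation that $\nabla w$ is $t$-independent because $u=e^{-\lambda_0 t}\phi_0$, so $\partial_t Q=-2\partial_t\psi$; exclusion of spatial-boundary contact via strict convexity plus the Hopf lemma; exclusion of the diagonal via $\psi(0,t)=0$; orthogonal variations interpolated by the nonparallel Jacobi field $\sec(d_0/2)\cos(s)E_i(s)$ whose index form $-2\tan(d_0/2)$ is the source of the trigonometric coefficients absent in the Euclidean case; and the eigenfunction identity $\Delta\log\phi_0=-\lambda_0-\abs{\nabla\log\phi_0}^2$ to eliminate $\Delta w$.

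One claim in your sketch is stated incorrectly and should be fixed before you attempt the bookkeeping you flag as ``the hard part.'' You write that the first-order spatial conditions $\nabla_{x,y}Q=0$ ``identify the components of $\nabla w$ along $\gamma_0'$ at $x_0,y_0$ with $\psi'(d_0/2,t_0)$.'' That is not what a first-order maximum condition gives. Differentiating the term $\bset{\nabla w(y),\gamma'(d/2)}$ along a longitudinal endpoint variation produces a \emph{second} derivative of $w$, so the vanishing of $\partial_y Q$ and $\partial_x Q$ along $\gamma_0'$ yields $\Hess w(\gamma_0',\gamma_0')\big|_{y_0}=\psi'(d_0/2,t_0)$ and $\Hess w(\gamma_0',\gamma_0')\big|_{x_0}=\psi'(d_0/2,t_0)$. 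The relation involving $\nabla w$ itself, namely $\bset{\nabla w(y_0),\gamma_0'(d_0/2)}-\bset{\nabla w(x_0),\gamma_0'(-d_0/2)}=2\psi(d_0/2,t_0)$ (with $\psi$, not $\psi'$), comes from the contact equality $Q=0$ rather than from a first-order condition. Once this is corrected, the remaining issues you raise --- careful spherical second-variation bookkeeping, and regularizing the merely Lipschitz/continuous initial data --- are genuine but routine, and they are handled in \cite{SetoWangWei} along the lines you indicate.
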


\begin{rem}
The improved log-concavity of the first eigenfunction was first proved by Andrews-Clutterbuck for the Euclidean domain, see \cite[Theorem 4.1]{AndrewsClutterbuck}. The non-trivial curvature terms in the spherical case come in, as the second and third lines in the differential inequality (\ref{eqn:psipdelambda0}) of $\psi(z,t)$. 
\end{rem}

\begin{rem}
For the proof of Theorem 3.2 in \cite{SetoWangWei} (see also Theorem 4.1 in \cite{AndrewsClutterbuck}), one only has to assume that 
\[
\psi \in C\pset{[0, D/2]\times [0, \infty)} \cap C^\infty \pset{(0, D/2)\times (0, \infty)}
\]
and $\psi(z, t)$ is Lipschitz continuous in $z$-variable on $[0, D/2]$. 
\end{rem}

\begin{rem}
For any $t \geq 0$, we have $\log u(\cdot, t) = \log \phi_0 - \lambda_0 t$. So the modulus of concavity of $\log u (\cdot, t)$ is the same as the modulus of log-concavity of $\phi_0$.
\end{rem}

\begin{rem}\label{rem:stationarysolutionlambda0}
Note that we can rewrite the right hand side of (\ref{eqn:psipdelambda0}) as
\begin{eqnarray*}
& & \psi''(z,t) + 2\psi'(z, t) \psi(z, t) - \tan(z) \big[ (n+1)\psi'(z, t) + 2\lambda_0 + 2\psi^2(z,t)\big] \\
& & - (n-1)\pset{1-\tan^2(z)}\psi(z, t) \\
& = & \frac{\pd}{\pd z}\pset{\psi'(z,t) + \psi^2(z,t) - (n-1)\tan(z) \psi(z, t) + \lambda_0} \\
& & - 2\tan(z) \pset{\psi'(z,t) + \psi^2(z,t) - (n-1)\tan(z) \psi(z, t) + \lambda_0}.
\end{eqnarray*}
So the stationary solution to the equality case in (\ref{eqn:psipdelambda0}) is given by 
\begin{equation}\label{eqn:psilambda0c}
\psi'(z) + \psi^2(z) - (n-1)\tan(z) \psi(z) + \lambda_0 = \frac{c}{\cos^2(z)}
\end{equation}
for some constant $c$. Let $\psi(z) = \pset{\log\phi}'(z)$ and then we have
\begin{equation}\label{eqn:philambda0c}
\phi''(z) - (n-1)\tan(z)\phi'(z) + \lambda_0 \phi(z) = \frac{c}{\cos^2(z)}\phi(z).
\end{equation}
Let $\varphi(z) = \phi(z) \cos^{\frac{n-1}{2}}(z)$ and the equation is 
\begin{equation}
\varphi''(z) -\frac{1}{4}\pset{\frac{(n-1)(n-3)}{\cos^2(z)} - (n-1)^2 - 4\lambda_0} \varphi(z) - \frac{c}{\cos^2(z)}\varphi(z) = 0.
\end{equation}
\end{rem}

\subsection{Outline of the proof of Theorem \ref{thm:logconcavity}}

The proof uses Theorem \ref{thm:SWWpsiPDE}. Recall that $\mu_0$ and $\tilde\phi_0 > 0$ are the first eigenvalue and eigenfunction of the Sturm-Liouville operator in (\ref{eqn:SLonedim}) with Dirichlet boundary condition. Instead of solving the parabolic equation in (\ref{eqn:psipdelambda0}) with the eigenvalue $\lambda_0$ of $\Delta$ on $\Omega$, we consider the following parabolic equation:
\begin{equation}\label{eqn:psipdemu0}
\left\{
\begin{array}{rcl}
\medskip 
\dfrac{\pd}{\pd t}\psi(z, t) & = & \psi''(z,t) + 2\psi'(z, t) \psi(z,t) \\
\medskip
& & - \tan(z) \big{[}(n+1)\psi'(z,t) + 2\mu_0 + 2\psi^2(z,t)\big{]} \\
\medskip
& &- (n-1)(1-\tan^2(z))\psi(z, t)
\end{array}
\right.
\end{equation}
on $(0, D/2) \times (0, \infty)$. Since $0 < \mu_0\leq \lambda_0$(see Lemma 3.12 in \cite{SetoWangWei}) and $\tan(z) > 0$ for $z\in [0, D/2]$, the solution $\psi(z,t)$ then satisfies the differential inequality in (\ref{eqn:psipdelambda0}). Our proof of existence of solution to equation (\ref{eqn:psipdemu0}) is more involved, though the existence of solution to equation (19) in \cite{AndrewsClutterbuck} follows from Theorem 12.25 in \cite{Lieberman} directly. For the convenience of the reader, we list the steps that will be carried out in Section 3, 4 and 5.
\begin{enumerate}[(I)]
\item For each fixed integer $k > 0$, we construct a piecewise smooth function $\psi_{k,0}(z)$ on $[0, D/2]$ such that $\psi_{k,0}(0) = 0$ and $\psi_{k,0}(D/2) = -k$. From the construction, $\psi_{k,0}$ is a modulus of log-concavity of $\phi_0$. It is similar to the construction of the initial modulus in \cite[Corollary 4.4]{AndrewsClutterbuck}. See Section 3. 
\item We solve the parabolic equation (\ref{eqn:psipdemu0}) and find a solution
\[
\psi_k(z, t) \in C\pset{[0, D/2]\times [0, \infty)}\cap C^\infty\pset{(0, D/2)\times (0, \infty)}
\] 
with the initial data $\psi_{k}(z,0) =\psi_{k,0}(z)$ and the boundary condition $\psi_{k}(0, t) = 0$ and $\psi_{k}(D/2, t) = - k$ for $t\geq 0$. Moreover, $\psi_k(z,t)$ is Lipschitz continuous in $z$-variable on $[0, D/2]$. Theorem \ref{thm:SWWpsiPDE} then implies that each $\psi_{k}(\cdot, t)$ is a modulus of log-concavity for $\phi_0$ for all $t \geq 0$. See Section 4.
\item We show that the solution $\psi_k(\cdot, t)$ obtained in the previous step converges uniformly to $\pset{\log\tilde\phi_0}'$ as $k \To \infty$ and $t\To \infty$. So the uniform limit is also a modulus of log-concavity of $\phi_0$. Hence Theorem \ref{thm:logconcavity} follows. See Section 5. 
\end{enumerate}

\medskip

\section{The initial modulus of log-concavity}

In this section, we construct a function $\psi_{k,0}$ on $[0, D/2]$ as an initial modulus of log-concavity for $\phi_0$, see Proposition \ref{prop:psik0initialdata}. It is similar to the one in the Euclidean case, see pp. 912--913 in \cite{AndrewsClutterbuck}. 

Fix $D\in (0, \pi)$. Let $\tilde\phi_0$ and $\mu_0$ be the first eigenfunction and eigenvalue of the one dimensional Sturm-Liouville operator in (\ref{eqn:SLonedim}), i.e., 
\begin{equation}\label{eqn:ODEphi0mu0}
\tilde \phi_0''(z) -(n-1)\tan(z) \tilde \phi_0'(z) = -\mu_0 \tilde \phi_0
\end{equation}
with $\tilde \phi_0(D/2) = 0$ and $\tilde \phi_0'(0) = 0$. We also normalize $\tilde \phi_0'(D/2) = -1$ for convenience. First we solve a Robin eigenvalue problem.
\begin{lem}\label{lem:RobinEigenfunction}
For any $\eps > 0$, there exists a Robin eigenfunction $\tilde \phi_{0,\eps}$ with $c(\eps)> 0$, such that 
\[
\pset{\tilde\phi_{0,\eps}}'' - (n-1)\tan(z)\pset{\tilde\phi_{0,\eps}}' + \mu_0 \tilde\phi_{0,\eps} = \frac{c(\eps)}{\cos^2(z)}\tilde\phi_{0,\eps}
\]
on $[0, D/2]$, and 
\begin{eqnarray*}
\tilde \phi_{0,\eps} (D/2) = \eps; \quad \tilde \phi_{0,\eps}'(D/2) = -1; \\
\tilde \phi_{0, \eps}'(0) = 0; \quad \tilde \phi_{0,\eps} > 0 \text{ on } [0, D/2].
\end{eqnarray*}
\end{lem}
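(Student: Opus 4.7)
The plan is to construct $\tilde\phi_{0,\eps}$ via a shooting argument in the parameter $c$. For each $c\geq 0$, let $\phi_c$ denote the unique solution on $[0,D/2]$ of the initial value problem
\[
\phi_c''(z) - (n-1)\tan(z)\,\phi_c'(z) + \pset{\mu_0 - \frac{c}{\cos^2(z)}}\phi_c(z) = 0,\qquad \phi_c(0)=1,\ \phi_c'(0)=0.
\]
At $c=0$ this yields $\phi_0 = \tilde\phi_0/\tilde\phi_0(0)$, so $\phi_0>0$ on $[0,D/2)$, $\phi_0(D/2)=0$, and $\phi_0'(D/2) = -1/\tilde\phi_0(0)<0$. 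The substitution $\varphi_c = \cos^{(n-1)/2}(z)\phi_c$ (analogous to that in Remark~\ref{rem:stationarysolutionlambda0} with $\mu_0$ in place of $\lambda_0$) recasts the ODE in Schr\"odinger form $\varphi_c'' + Q_c(z)\varphi_c = 0$ with $Q_c$ strictly decreasing in $c$, and a standard Wronskian comparison then shows that the first positive zero of $\phi_c$ is strictly increasing in $c$, starting from $D/2$ at $c=0$. In particular, $\phi_c>0$ on $[0,D/2]$ with $\phi_c(D/2)>0$ for every $c>0$.

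The core step is to analyze the shooting function $F:(0,\infty)\to\Real$ defined by
\[
F(c) := -\frac{\phi_c'(D/2)}{\phi_c(D/2)},
\]
which is continuous by smooth dependence on parameters. Rewriting the ODE in Sturm--Liouville form
\[
\pset{\cos^{n-1}(z)\,\phi_c'(z)}' + \mu_0\cos^{n-1}(z)\,\phi_c(z) = c\cos^{n-3}(z)\,\phi_c(z),
\]
I multiply the $c_1$-equation by $\phi_{c_2}$ and the $c_2$-equation by $\phi_{c_1}$, subtract, and integrate over $[0,D/2]$, using $\phi_c'(0)=0$ to kill the boundary term at $z=0$. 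This gives the Picone-type identity
\[
\cos^{n-1}(D/2)\bigl[\phi_{c_1}'(D/2)\phi_{c_2}(D/2)-\phi_{c_2}'(D/2)\phi_{c_1}(D/2)\bigr] = -(c_2-c_1)\int_0^{D/2}\cos^{n-3}(z)\,\phi_{c_1}\phi_{c_2}\,dz < 0
\]
for $0\leq c_1<c_2$. Dividing by $\phi_{c_1}(D/2)\phi_{c_2}(D/2)>0$ (valid once $c_1>0$) shows that $F$ is strictly decreasing on $(0,\infty)$.

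Finally I would compute the limits of $F$ and conclude by the intermediate value theorem. Setting $c_1=0$ in the identity above and using $\phi_0(D/2)=0$ yields $\phi_c(D/2)\sim C\,c$ as $c\to 0^+$ with $C>0$, while $\phi_c'(D/2)\to\phi_0'(D/2)<0$; hence $F(c)\to+\infty$. For $c>\mu_0$, one has $\phi_c''(0)=c-\mu_0>0$, and at any critical point $z_*\in(0,D/2]$ of $\phi_c$ the ODE gives $\phi_c''(z_*)=\pset{c/\cos^2(z_*)-\mu_0}\phi_c(z_*)>0$, so $\phi_c$ admits no local maximum in $(0,D/2]$, forcing $\phi_c'>0$ there and $F(c)<0$. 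Continuity and strict monotonicity of $F$ then produce, for each $\eps>0$, a unique $c(\eps)>0$ with $F(c(\eps))=1/\eps$, and setting
\[
\tilde\phi_{0,\eps}(z) := -\phi_{c(\eps)}(z)\big/\phi_{c(\eps)}'(D/2)
\]
yields a function satisfying all the stated conditions. The main obstacle is establishing the positivity of $\phi_c$ throughout $[0,D/2]$ for every $c\geq 0$ and verifying that $F$ eventually becomes negative; both rest on the Sturm/Wronskian comparison carried out in the first paragraph, after which the Picone identity and the IVT are routine.
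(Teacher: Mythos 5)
Your proof is correct, and it reaches the goal by a technically different route from the paper's. Both arguments shoot in the parameter $c$ and finish with monotonicity plus the intermediate value theorem, but the paper (following Andrews--Clutterbuck) passes to the transformed equation $\varphi'' = (\tilde V + c\cos^{-2}z)\varphi$ and applies the Pr\"ufer transformation $q=\arctan(\varphi'/\varphi)$: the resulting first-order ODE for the angle is monotone in $c$ by a one-line ODE comparison, and because the angle variable is globally regular even where $\varphi$ vanishes, positivity of $\varphi$ on $(0,D/2)$ is encoded automatically by keeping $q\in(-\pi/2,\pi/2)$, so no separate non-vanishing lemma is needed. Your approach instead tracks the shooting quantity $F(c)=-\phi_c'(D/2)/\phi_c(D/2)$ directly in the Sturm--Liouville form $(\cos^{n-1}\phi')'+\mu_0\cos^{n-1}\phi=c\cos^{n-3}\phi$, which forces you to establish two points that Pr\"ufer gives for free: that $\phi_c>0$ on $[0,D/2]$ for $c>0$ (your Wronskian/Sturm comparison of first zeros, which is correct since the Schr\"odinger potential is strictly monotone in $c$ and the Wronskian vanishes at $z=0$), and that $F$ is strictly decreasing (your Picone identity, which requires that positivity as input). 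You also need a separate check that $F$ changes sign, and your observations there are sound: the Picone identity with $c_1=0$ gives $\phi_c(D/2)\sim Cc$ and hence $F(c)\to+\infty$, while for $c>\mu_0$ the solution $\phi_c$ has no interior local maximum (so $\phi_c'>0$ and $F(c)<0$). The normalization $\tilde\phi_{0,\eps}=-\phi_{c(\eps)}/\phi_{c(\eps)}'(D/2)$ then yields all four stated conditions. Overall the Pr\"ufer route is a bit more economical; yours is a legitimate and somewhat more elementary alternative, trading the angle variable for the self-adjoint form plus Wronskian computations, at the cost of one extra positivity step.
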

\begin{proof}
The argument is similar to the one in \cite[pp. 905-906]{AndrewsClutterbuck}. Denote $m = (n-1)/2$. Let $\varphi_0(z) = \tilde\phi_0(z) \cos^{m}(z)$ and 
\[
\tilde V(z) = \frac{1}{4}\pset{\frac{(n-1)(n-3)}{\cos^2(z)} - (n-1)^2 - 4\mu_0}.
\]
Then we have $\varphi_0''(z) = \tilde V(z) \varphi_0(z)$ with 
\[
\varphi_0'(0) = 0; \quad \varphi_0(D/2) = 0; \qand \varphi_0(z) > 0 \text{ for } z \in [0, D/2).
\]
Recall the Pr\"ufer transformation: if 
\[
\varphi'' = \pset{\tilde V + \frac{c}{\cos^2(z)}}\varphi,
\]
then $q = \arctan(\varphi' /\varphi)$ satisfies the first order ODE:
\begin{equation}\label{eqn:ODEq}
\frac{d}{dz} q - \pset{\tilde V + \frac{c}{\cos^2(z)}} \cos^2 q + \sin^2 q = 0.
\end{equation}
Let $q(z, q_0, c)$ be the solution of ODE (\ref{eqn:ODEq}) with $q(0, q_0, c) = q_0$. The ODE comparison implies that $q$ is strictly increasing in $q_0$ for all $z$, and also strictly increasing in $c$ for $z > 0$. The choice of $c=0$ and $q_0= 0$ corresponds to $\varphi_0$, and so $q(D/2, 0, 0) = - \pi/2$ and $q(z, 0, 0)\in (-\pi/2, \pi/2)$ for $0< z< D/2$.

Since $q(D/2, 0, c)$ is strictly increasing in $c$ and $q(D/2, 0, 0) = -\pi/2$, for $\eps > 0$, let
\[
\sigma = \frac{\eps}{1+ \eps m \tan(D/2)},
\]
and then there exists a unique $c(\eps) > 0$ such that $q(D/2, 0, c(\eps)) = \arctan \sigma - \pi/2$ and $q(z, 0, c(\eps)) \in (-\pi/2, \pi/2)$ for $0<z<D/2$. The corresponding $\varphi$ is given by 
\[
\varphi_{0,\eps}(z) = \varphi_{0,\eps}(D/2) \exp \pset{-\int_z^{D/2} \tan q(s,0,c(\eps))ds}
\]
and then
\begin{eqnarray*}
\tilde\phi_{0,\eps}(z) = \eps \frac{\cos^m(D/2)}{\cos^m(z)} \exp \pset{- \int_z^{D/2} \tan q(s, 0, c(\eps))ds},
\end{eqnarray*}
where we used the value $\tilde \phi_{0,\eps}(D/2) = \eps$. It follows that $\tilde \phi_{0,\eps}(z) >0$ for $z\in [0, D/2]$. The derivative is
\[
\tilde\phi_{0,\eps}'(z) = \big[m\tan(z) + \tan q(z, 0, c(\eps))\big]\tilde\phi_{0,\eps}(z).
\]
So we have $\tilde\phi'_{0,\eps}(0) = 0$ and 
\[
\tilde\phi'_{0,\eps}(D/2) = \pset{m \tan(D/2) - \frac{1}{\sigma}}\eps = -1.
\]
This finishes the proof.
\end{proof}

Next we construct the initial data $\psi_{k,0}$. The stationary solution $\psi(z)$ to the parabolic equation (\ref{eqn:psipdemu0}) solves the following ODE:
\begin{equation}\label{eqn:ODEpsimu0}
\psi'(z) + \psi^2(z) -(n-1)\tan(z) \psi(z)+\mu_0 = \frac{c}{\cos^2(z)}
\end{equation}
for some constant $c$ (see Remark \ref{rem:stationarysolutionlambda0}). Fix an integer $k> 0$ and consider the solutions $\psi^L_c$ and $\psi^R_{k, c}$ with 
\[
\psi^L_{c}(0) = 0 \qand \psi^R_{k, c}(D/2) = - k. 
\]
Then from the ODE comparison to $\varphi(z) = \phi(z)\cos^{\frac{n-1}{2}}(z)$, we have
\begin{enumerate}[(a)]
\item The solution $\psi^L_{c}(z)$ is strictly increasing in $c$ for $0 < z \leq D/2$.
\item For fixed $k> 0$, the solution $\psi^R_{k, c}(z)$ is strictly decreasing in $c$ for $0\leq z < D/2$.
\end{enumerate}

Let $\eps = 1/k$ and $\tilde\phi_{0, 1/k}(z)$($0\leq z \leq D/2$) be the Robin eigenfunction with $c= c(1/k)>0$ in Lemma \ref{lem:RobinEigenfunction}. Let
\begin{equation}
\psi^L_{c(1/k)} = \psi^R_{k, c(1/k)} = \tilde{\psi}_{k,0}
\end{equation}
with
\begin{equation}\label{eqn:tildepsik0} 
\tilde\psi_{k,0} = \pset{\log \tilde\phi_{0,1/k}}'.
\end{equation}
\begin{prop}\label{prop:uniquenesstpsi}
For each integer $k > 0$, $\tilde \psi_{k,0}$ in (\ref{eqn:tildepsik0}) is the unique stationary solution in $C^2\pset{[0, D/2]}$ to the parabolic equation (\ref{eqn:psipdemu0}) with boundary condition $\psi(0) = 0$ and $\psi(D/2) = -k$.
\end{prop}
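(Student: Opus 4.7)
The plan is a shooting and monotonicity argument that reduces uniqueness of the stationary solution to uniqueness of the Riccati parameter $c$ appearing in (\ref{eqn:ODEpsimu0}). First I would show that any $C^2([0, D/2])$ stationary solution $\psi$ of (\ref{eqn:psipdemu0}) necessarily satisfies the Riccati ODE (\ref{eqn:ODEpsimu0}) for some constant $c \in \Real$. This is an almost immediate consequence of the factorization already recorded in Remark \ref{rem:stationarysolutionlambda0} (with $\lambda_0$ replaced by $\mu_0$): setting the right-hand side of (\ref{eqn:psipdemu0}) equal to zero yields the linear equation $F'(z) = 2\tan(z)\, F(z)$ for
\[
F(z) := \psi'(z) + \psi^2(z) - (n-1)\tan(z)\psi(z) + \mu_0,
\]
whose general solution on $[0, D/2]$ is $F(z) = c/\cos^2(z)$. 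Hence $\psi$ solves (\ref{eqn:ODEpsimu0}) with that $c$.

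Next, because $D < \pi$ keeps the coefficients in (\ref{eqn:ODEpsimu0}) smooth on $[0, D/2]$, this first-order ODE together with the initial value $\psi(0) = 0$ has a unique local solution (by Picard--Lindel\"of applied to the Lipschitz right-hand side on compact subsets), which is precisely $\psi^L_c$ as defined in the discussion preceding the proposition. Hence every $C^2$ stationary solution with $\psi(0) = 0$ is of the form $\psi^L_c$ for a unique $c \in \Real$, and the remaining boundary condition $\psi(D/2) = -k$ becomes the scalar equation $\psi^L_c(D/2) = -k$.

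To finish, I would invoke the monotonicity property (a) recalled just before the proposition: $c \mapsto \psi^L_c(z)$ is strictly increasing for every fixed $z \in (0, D/2]$. In particular $c \mapsto \psi^L_c(D/2)$ is strictly monotone wherever it is defined, so at most one value of $c$ can satisfy $\psi^L_c(D/2) = -k$. Since $\tilde\psi_{k,0} = (\log\tilde\phi_{0,1/k})' = \psi^L_{c(1/k)}$ lies in $C^2([0, D/2])$ (because $\tilde\phi_{0,1/k}$ from Lemma \ref{lem:RobinEigenfunction} is smooth and strictly positive on that interval) and already meets both boundary conditions $\tilde\psi_{k,0}(0) = 0$ and $\tilde\psi_{k,0}(D/2) = -k$, it must be that unique stationary solution.

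There is no serious obstacle in this argument; the only step requiring real care is the passage from a general stationary solution to the Riccati ODE with a single free constant, which is a direct antidifferentiation once the factorization from Remark \ref{rem:stationarysolutionlambda0} is invoked. The rest is standard ODE uniqueness combined with the monotonicity in $c$ that was already established in the preamble to the proposition.
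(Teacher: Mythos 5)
Your proposal is correct and follows essentially the same route as the paper's proof: reduce to the Riccati ODE to identify the stationary solution as $\psi^L_c$ once $\psi(0)=0$ is imposed, then use the strict monotonicity of $c\mapsto\psi^L_c(D/2)$ to pin down $c=c(1/k)$. You merely supply the justification (factorization from Remark \ref{rem:stationarysolutionlambda0} plus Picard--Lindel\"of) that the paper leaves implicit.
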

\begin{proof}
It follows from the monotonicity of $\psi_c^L$ in $c$. Consider the boundary condition at $z = 0$ and then the stationary solution is given by $\psi = \psi_c^L$ for some constant $c$. The boundary condition $\psi(D/2) = -k$ determines $c$ as $c = c(1/k)$. So we have $\psi = \tilde\psi_{k,0}$. This finishes the proof.
\end{proof}

In the following, we derive the upper bounds of $\psi^L_{c}$ and $\psi^R_{k,c}$. Let
\[
p(z) = \psi(z) - \frac{n-1}{2}\tan(z),
\]
and then we have
\begin{equation*}
p'(z) = \psi'(z) - \frac{n-1}{2}\sec^2(z).
\end{equation*}
It follows that $p(z)$ solves the ODE:
\begin{equation}\label{eqn:pVc}
p'(z) + p^2(z) =  V(z)
\end{equation}
with
\begin{equation}\label{eqn:Vcmu0}
V(z) = \frac{(n-1)(n-3)+ 4c}{4\cos^2(z)} - \frac{(n-1)^2}{4} - \mu_0.
\end{equation}
The boundary values of $p(z)$ are given by 
\[
p(0) = \psi(0)\qand p(D/2) = \psi(D/2) - \frac{n-1}{2}\tan(D/2). 
\]
Denote $p^L_{c}(z)$ the solution with $p(0) = 0$, and $p^R_{\tilde k, c}(z)$ the solution with $p(D/2) = -\tilde k$ with
\begin{equation}\label{eqn:tildek}
\tilde k = k + \frac{n-1}{2}\tan(D/2).
\end{equation}
The function $V(z)$ is monotone, and the infimum and supremum are:
\begin{enumerate}[(a)] 
\item if $(n-1)(n-3)+ 4c \geq 0$, then 
\begin{eqnarray*}
\inf V & = & c - \frac{n-1}{2} - \mu_0 \\ 
\sup V & = & \frac{4c+(n-1)(n-3)}{4\cos^2(D/2)} - \frac{(n-1)^2}{4} - \mu_0;
\end{eqnarray*}
\item if $(n-1)(n-3)+4c < 0$, then 
\begin{eqnarray*}
\inf V & = & \frac{4c+(n-1)(n-3)}{4\cos^2(D/2)} - \frac{(n-1)^2}{4} - \mu_0 \\
\sup V & = & c - \frac{n-1}{2} - \mu_0.
\end{eqnarray*}
\end{enumerate}
In either case, let $\lambda_{\pm}\geq 0$ be real numbers such that $\lambda_+^2 \geq \sup V$ and $\lambda_-^2 \geq - \inf V$. Riccati equation comparison gives the upper bounds of $p^L_c$ and $p^R_{\tilde k, c}$: 
\begin{equation*}
p^L_{c} (z) \leq \lambda_+ \tanh(\lambda_+ z), \quad \text{ for } z \in [0, D/2];
\end{equation*}
and
\begin{eqnarray*}
p^R_{k, c}(z) & \leq & \lambda_- \tan\pset{\lambda_- \pset{D/2 - z} - \arctan\pset{\frac{\tilde k}{\lambda_-}}} \nonumber \\
& = & \frac{\lambda_- \tan\pset{\lambda_-\pset{\frac{D}{2} - z}}-\tilde k}{1+ \frac{\tilde k}{\lambda_-}\tan\pset{\lambda_-\pset{\frac{D}{2}- z}}}, \quad \text{ if } z > \frac{D}{2} - \frac{\frac{\pi}{2}+\arctan\pset{\frac{\tilde k}{\lambda_-}}}{\lambda_-}
\end{eqnarray*}
where $\tilde k$ is given in (\ref{eqn:tildek}). Then we have 
\begin{equation}\label{eqn:psiLupperbound}
\psi^L_{c} (z) \leq \lambda_+ \tanh(\lambda_+ z) + \frac{n-1}{2}\tan(z), \quad \text{ for } z \in [0, D/2];
\end{equation}
and
\begin{eqnarray}\label{eqn:psiRupperbound}
\psi^R_{k, c}(z) & \leq & \lambda_- \tan\pset{\lambda_- \pset{D/2 - z} - \arctan\pset{\frac{\tilde k}{\lambda_-}}} + \frac{n-1}{2}\tan(z) \nonumber \\
& = & \frac{\lambda_- \tan\pset{\lambda_-\pset{\frac{D}{2} - z}}-\tilde k}{1+ \frac{\tilde k}{\lambda_-}\tan\pset{\lambda_-\pset{\frac{D}{2}- z}}} + \frac{n-1}{2}\tan(z), \\ 
& & \qquad \text{ if } z > \frac{D}{2} - \frac{\frac{\pi}{2}+\arctan\pset{\frac{\tilde k}{\lambda_-}}}{\lambda_-} \nonumber
\end{eqnarray}
where $\tilde k = k + \frac{n-1}{2} \tan(D/2)$.

For any $s \geq 0$, both $\psi^L_{c(1/k) +s}(z)$ and $\psi^R_{k, c(1/k)-s}(z)$ are bounded from below by $\psi^L_{c(1/k)}(z)$, and have upper bounds in (\ref{eqn:psiLupperbound}) and (\ref{eqn:psiRupperbound}). So the supersolution $\psi^+_{k,s}$ is given by 
\[
\psi^+_{k,s}(z) = \min\set{\psi^L_{c(1/k)+s}(z), \, \psi^R_{k, c(1/k)-s}(z)}.
\]
Note that for $s>0$, since 
\[
\psi^L_{c(1/k)+s}(D/2) > \psi^L_{c(1/k)}(D/2) = -k = \psi^R_{k,c(1/k)-s}(D/2)
\]
and
\[
\psi^R_{k,c(1/k)-s}(0) > \psi^R_{k, c(1/k)}(0) = 0 = \psi^L_{c(1/k)+s}(0),
\]
the functions $\psi^L_{c(1/k)+s}$ and $\psi^R_{k,c(1/k)-s}$ intersect in the interval $(0, D/2)$, and so $\psi^+_{k,s}$ is a piecewise-smooth function. 

Next we derive the lower bound of $\psi^+_{k,s}$ for large $s > 0$. Write $V_k(z) = V(z)$ in (\ref{eqn:Vcmu0}) when $c = c(1/k)$. For 
\[
s > \max\set{- \inf_{z\in [0, D/2]} V_k(z), \, \sup_{z\in [0, D/2]} V_k(z)}, 
\]
let 
\begin{equation}\label{eqn:tildelambda}
\tilde\lambda_+ = \sqrt{s + \inf_{z\in [0, D/2]} V_k(z)} \qand \tilde\lambda_- = \sqrt{s - \sup_{z\in [0, D/2]} V_k(z)}.
\end{equation}
Note that $p^L_{c(1/k)+s}$ solves the equation
\[
p'(z) + p^2(z) = V_k(z) + \frac{s}{\cos^2(z)}.
\]
It follows that 
\[
p'(z) + p^2(z) \geq \tilde\lambda_+^2
\]
and then
\[
p^L_{c(1/k)+s} (z) \geq \tilde \lambda_+ \tanh\pset{\tilde\lambda_+ z}
\]
for $z\in [0, D/2]$. Similarly, $p^R_{\tilde k, c(1/k)-s}$ solves the equation
\[
p'(z) + p^2(z) = V_k(z) - \frac{s}{\cos^2(z)}\leq - \tilde\lambda_-^2.
\]
It follows that, if $z > \dfrac{D}{2} - \dfrac{\frac{\pi}{2} + \arctan\pset{\tilde k/\tilde\lambda_-}}{\tilde\lambda_-}$, then we have
\begin{eqnarray*}
p^R_{\tilde k, c(1/k) - s} (z) & \geq & \tilde\lambda_- \tan\pset{\tilde\lambda_- \pset{D/2 - z} - \arctan\pset{\frac{\tilde k}{\tilde\lambda_-}}} \\
& = & \frac{\tilde \lambda_- \tan\pset{\tilde \lambda_- (D/2- z)}-\tilde k}{1+ \frac{\tilde k}{\tilde \lambda_-}\tan\pset{\tilde \lambda_-(D/2 - z)}}.
\end{eqnarray*}
These give us the lower bound of $\psi^+_{k,s}$ as 
\begin{equation}
\psi^+_{k,s}(z) \geq 
\left\{
\begin{array}{ll}
\medskip
\tilde \lambda_+ \tanh\pset{\tilde \lambda_+ z} + \dfrac{n-1}{2}\tan(z), & 0\leq z \leq z_0 \\
\dfrac{\tilde\lambda_- \tan\pset{\tilde\lambda_-\pset{D/2-z}}-\tilde k}{1+ \frac{\tilde k}{\tilde\lambda_- }\tan\pset{\tilde\lambda_- \pset{D/2 -z}}} + \dfrac{n-1}{2} \tan(z), & z_0 \leq z \leq \frac D 2;
\end{array}
\right.
\end{equation}
where $\tilde k = k + \frac{n-1}{2}\tan(D/2)$, and we take 
\[
z_0 > \frac{D}{2} - \frac{1}{\tilde\lambda_-}\pset{\frac{\pi}{2} + \arctan\pset{\frac{\tilde k}{\tilde \lambda_-}}}
\]
to make the two cases equal. 

\begin{lem}\label{lem:psiplusks}
Let $\phi_0$ be in Theorem \ref{thm:SWWpsiPDE}. For each $k>0$ large enough, there exists $s(k) \geq 0$ such that $\psi^+_{k,s}$ is a modulus of log-concavity for $\phi_0$. 
\end{lem}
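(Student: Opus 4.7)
The proof strategy, modeled on \cite[Corollary 4.4]{AndrewsClutterbuck}, is to produce a fixed Lipschitz modulus of log-concavity $\psi_\ast$ for $\phi_0$ with $\psi_\ast(z) \to -\infty$ as $z \to D/2$, then to truncate it to match the endpoint value $-k$, and finally to dominate this truncation by $\psi^+_{k, s(k)}$ for $s(k)$ sufficiently large. For the base modulus, I would use $\phi_0 \in C^\infty(\overline\Omega)$, $\phi_0 > 0$ on $\Omega$, and the Hopf boundary lemma $|\nabla\phi_0| \geq c_0 > 0$ on $\partial\Omega$ to obtain the asymptotic $\phi_0(x) \asymp d(x, \partial\Omega)$ near $\partial\Omega$. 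If $d(x_n, y_n) \to D$, compactness forces $x_n, y_n$ to approach antipodal points of $\partial\Omega$ and the tangents $\gamma_n'(\pm d/2)$ to tend to the inward / outward normals there; hence $\bset{\nabla \log \phi_0(x_n), \gamma_n'(-d/2)} \to +\infty$ and $\bset{\nabla \log\phi_0(y_n), \gamma_n'(d/2)} \to -\infty$, so the left side of (\ref{eqn:logconcavity}) tends to $-\infty$ along near-diameter pairs. Combined with interior Hessian bounds on $\log\phi_0$, this yields the desired Lipschitz function $\psi_\ast \colon [0, D/2) \to \mathbb{R}$ with $\psi_\ast(0) = 0$.

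For each large $k$, pick $\delta_k > 0$ with $\psi_\ast(D/2 - \delta_k) = -k$ and set $\psi_\ast^{(k)}(z) = \psi_\ast(z)$ on $[0, D/2 - \delta_k]$ and $\psi_\ast^{(k)}(z) = -k$ on $[D/2 - \delta_k, D/2]$; this is a Lipschitz modulus of log-concavity for $\phi_0$, since pairs with $d(x, y)/2 > D/2 - \delta_k$ automatically satisfy LHS $\leq -2k$ by the blow-up above and the choice of $\delta_k$. Next I compare with $\psi^+_{k, s}$: the two functions agree at the endpoints $z = 0, D/2$, and on $(0, D/2)$ the lower bounds after (\ref{eqn:tildelambda}) give
\[
\psi^+_{k, s}(z) \geq \tilde\lambda_+(s) \tanh\!\bigl(\tilde\lambda_+(s) z\bigr) + \frac{n-1}{2} \tan(z) \quad \text{on } [0, z_0],
\]
together with the companion Riccati bound on $[z_0, D/2]$, where $\tilde\lambda_\pm(s) \to \infty$ as $s \to \infty$ and $z_0 \to D/2$. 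Hence $\psi^+_{k, s} \to +\infty$ uniformly on each compact subinterval of $(0, D/2)$; moreover, $(\psi^L_{c(1/k)+s})'(0) = s + c(1/k) - \mu_0$ dominates the bounded initial slope of $\psi_\ast^{(k)}$, while near $z = D/2$ the function $\psi^R_{k, c(1/k) - s}$ decreases from large positive values to $-k$ rapidly enough that $\psi^R \geq -k = \psi_\ast^{(k)}$ on $[D/2 - \delta_k, D/2]$ (using $(\psi^R)'(D/2) \approx -k^2 - s \sec^2(D/2)$, which is strictly negative for large $k, s$). Choosing $s(k)$ large enough yields $\psi^+_{k, s(k)} \geq \psi_\ast^{(k)}$ pointwise on $[0, D/2]$, and since pointwise domination of a valid modulus preserves the modulus property, $\psi^+_{k, s(k)}$ is also a modulus of log-concavity for $\phi_0$.

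The main obstacle is the construction of the base Lipschitz modulus $\psi_\ast$. Although the blow-up of the left-hand side of (\ref{eqn:logconcavity}) along near-diameter pairs is heuristically clear from the Hopf lemma, arranging an upper bound that depends essentially only on $d(x,y)$ (plus a term tending to $-\infty$ as $d(x,y) \to D$) requires a careful two-point analysis combining interior Hessian bounds on $\log \phi_0$ with the Hopf asymptotic $\phi_0 \asymp d(\cdot, \partial\Omega)$; this is the spherical counterpart of the estimate that makes the Euclidean construction of \cite[Corollary~4.4]{AndrewsClutterbuck} go through.
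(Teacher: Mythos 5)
Your proposal takes essentially the same route as the paper. The paper's proof of this lemma is only two sentences long: it invokes the argument of \cite[Lemma 4.5]{AndrewsClutterbuck} (with the simplification that the extra positive term $\frac{n-1}{2}\tan(z)$ in the lower bound for $\psi^+_{k,s}$ after (\ref{eqn:tildelambda}) can simply be discarded), and substitutes \cite[Lemmas~3.4 and 3.5]{SetoWangWei} for the Euclidean \cite[Lemmas~4.2 and 4.3]{AndrewsClutterbuck}. Those two cited lemmas of Seto--Wang--Wei are precisely the ``base modulus'' ingredient you identified as the crux: the interior two-point Hessian bound on $\log\phi_0$ together with the Hopf-type boundary asymptotic $\phi_0 \asymp d(\cdot, \partial\Omega)$, which produce a Lipschitz modulus of log-concavity tending to $-\infty$ at $z=D/2$. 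Your truncation step and the pointwise comparison of $\psi^+_{k,s}$ against that truncation via the Riccati lower bounds -- $\tilde\lambda_\pm(s)\to\infty$ forcing $\psi^+_{k,s}\to+\infty$ on interior compacta, plus $\psi^+_{k,s}\geq-k$ everywhere (which, note, follows more directly from $\psi^L_{c(1/k)+s},\psi^R_{k,c(1/k)-s}\geq\tilde\psi_{k,0}\geq -k$) -- is exactly the replay of \cite[Lemma 4.5]{AndrewsClutterbuck} that the paper appeals to. So you have reconstructed the argument correctly; the only thing to add is the explicit citation to \cite[Lemmas 3.4, 3.5]{SetoWangWei} in place of the heuristic Hopf-lemma discussion you flagged as ``the main obstacle.''
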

\begin{proof}
It follows from the argument in \cite[Lemma 4.5]{AndrewsClutterbuck} as we throw away the positive term $\frac{n-1}{2}\tan(z)$ in the lower bound of $\psi^+_{k,s}$. Instead of Lemmas 4.2 and 4.3 in \cite{AndrewsClutterbuck} for the Euclidean case, we use Lemmas 3.4 and 3.5 in \cite{SetoWangWei} for the spherical case. 
\end{proof}

Define
\[
s(k) = \inf\set{s\geq 0: \, \psi^+_{k,s} \text{ is a modulus of log-concavity for }\phi_0}
\]
and then we choose the initial data $\psi_{k,0}$ as
\begin{equation}\label{eqn:psik0}
\psi_{k,0}(z) = \min\set{\psi^+_{j,s(j)}(z): \, 1\leq j \leq k}
\end{equation}
for $0\leq z \leq D/2$. Note that $\psi_{k,0}(z)$ is non-increasing in $k$ for $z\in (0, D/2)$. 
 
\begin{prop}\label{prop:psik0initialdata}
For each integer $k>0$, the function $\psi_{k,0}$ in (\ref{eqn:psik0}) satisfies the following properties.
\begin{enumerate}[(i)]
\item It is Lipschitz continuous on $[0, D/2]$.
\item It is a modulus of log-concavity of $\phi_0$ in Theorem \ref{thm:SWWpsiPDE} on $[0, D/2]$.
\item It satisfies the boundary condition $\psi_{k,0}(0)=0$ and $\psi_{k,0}(D/2) = -k$. 
\end{enumerate}
Furthermore, we have either
\begin{enumerate}[(a)]
\item $\psi_{k,0}=\tilde\psi_{k,0}$ on $[0, D/2]$, or
\item $\psi_{k,0}(z) > \tilde\psi_{k,0}(z)$ for any $z \in (0, D/2)$, where $\tilde\psi_{k,0}$ is given in (\ref{eqn:tildepsik0}).
\end{enumerate}
\end{prop}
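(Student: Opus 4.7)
The plan is to verify the regularity and boundary properties (i)--(iii) directly from the construction, and then to establish the dichotomy (a)/(b) from the strict monotonicities of $\psi^L_c$ and $\psi^R_{k,c}$ in the parameter $c$ together with the strict monotonicity of $\eps\mapsto c(\eps)$ that is implicit in Lemma \ref{lem:RobinEigenfunction}.

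For (i)--(iii): Each $\psi^+_{j,s(j)}$ is the pointwise minimum of two smooth solutions of the Riccati equation (\ref{eqn:ODEpsimu0}) on $[0,D/2]$, hence Lipschitz, and $\psi_{k,0}$ is then the minimum of finitely many Lipschitz functions, which gives (i). For (ii), the defining inequality of a modulus of log-concavity is an upper bound on a quantity that does not depend on $\psi$, so the pointwise minimum of any finite family of moduli of log-concavity is again a modulus of log-concavity; since each $\psi^+_{j,s(j)}$ qualifies by the definition of $s(j)$ and Lemma \ref{lem:psiplusks}, so does $\psi_{k,0}$. For (iii), $\psi^L_c(0)=0$ for every $c$ and $\psi^R_{j,c(1/j)-s(j)}(0)\ge\psi^R_{j,c(1/j)}(0)=\tilde\psi_{j,0}(0)=0$ by the monotonicity of $\psi^R_{j,c}$ in $c$, so $\psi^+_{j,s(j)}(0)=0$; symmetrically, $\psi^R_{j,c(1/j)-s(j)}(D/2)=-j$ and $\psi^L_{c(1/j)+s(j)}(D/2)\ge\psi^L_{c(1/j)}(D/2)=-j$, so $\psi^+_{j,s(j)}(D/2)=-j$, and minimizing over $1\le j\le k$ yields $\psi_{k,0}(0)=0$ and $\psi_{k,0}(D/2)=-k$.

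For the dichotomy, the strict monotonicity of $\psi^L_c$ (resp.\ $\psi^R_{j,c}$) in $c$ on $(0,D/2]$ (resp.\ on $[0,D/2)$) gives $\psi^+_{j,s(j)}\ge\tilde\psi_{j,0}$ on $[0,D/2]$, with strict inequality on $(0,D/2)$ whenever $s(j)>0$. Next, inspection of the construction of $c(\eps)$ in Lemma \ref{lem:RobinEigenfunction} shows that $\sigma(\eps)=\eps/(1+\eps m\tan(D/2))$, and hence $q(D/2,0,c(\eps))=\arctan\sigma(\eps)-\pi/2$, is strictly increasing in $\eps$; the strict monotonicity of $q(D/2,0,c)$ in $c$ then forces $\eps\mapsto c(\eps)$ to be strictly increasing. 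For $j<k$ this yields $c(1/j)>c(1/k)$ and so $\tilde\psi_{j,0}=\psi^L_{c(1/j)}>\psi^L_{c(1/k)}=\tilde\psi_{k,0}$ strictly on $(0,D/2]$. Chaining these inequalities, $\psi^+_{j,s(j)}(z)>\tilde\psi_{k,0}(z)$ for all $1\le j<k$ and all $z\in(0,D/2]$. It remains to handle the index $j=k$: if $s(k)=0$, then $\psi^+_{k,0}=\min\{\psi^L_{c(1/k)},\psi^R_{k,c(1/k)}\}=\tilde\psi_{k,0}$ identically, so $\psi_{k,0}=\tilde\psi_{k,0}$, giving alternative (a); if $s(k)>0$, then $\psi^+_{k,s(k)}>\tilde\psi_{k,0}$ on $(0,D/2)$, and combined with the strict inequality for $j<k$ this gives alternative (b).

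The main delicate point is isolating the strict monotonicity of $\eps\mapsto c(\eps)$, which is not formally stated as a lemma but is forced by the Pr\"ufer-variable construction behind Lemma \ref{lem:RobinEigenfunction}; once this is combined with the Riccati comparisons that give monotonicity of $\psi^L_c$ and $\psi^R_{k,c}$ in $c$, the alternative (a)/(b) reduces to bookkeeping on the interior inequalities and the common endpoint values $0$ and $-k$ shared by $\psi_{k,0}$ and $\tilde\psi_{k,0}$.
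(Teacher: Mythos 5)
Your proof is correct and follows essentially the same route as the paper's: (i)--(iii) come directly from the construction as a finite minimum of stationary solutions, and the dichotomy follows from the strict ordering $\tilde\psi_{j,0}>\tilde\psi_{k,0}$ on $(0,D/2]$ for $j<k$ combined with a case split on whether $s(k)=0$. The only stylistic difference is that you isolate the strict monotonicity of $\eps\mapsto c(\eps)$ (via the explicit $\sigma(\eps)$ formula and the Pr\"ufer variable) as the mechanism for ordering $\tilde\psi_{j,0}$ and $\tilde\psi_{k,0}$, whereas the paper simply invokes ``ODE comparison'' for (\ref{eqn:ODEpsimu0}) given the shared value at $0$ and the differing values at $D/2$ --- these are the same Sturm/Riccati monotonicity fact seen from two ends. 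One small imprecision: you write $\psi^R_{j,c(1/j)-s(j)}(0)\ge 0$, but for $s(j)>0$ this branch may blow up before reaching $z=0$ (the paper notes it can be defined only on $(z_0,D/2]$); the conclusion $\psi^+_{j,s(j)}(0)=0$ still holds because near $0$ the minimum is realized by the globally defined $\psi^L_{c(1/j)+s(j)}$, but the inequality as literally written evaluates a function outside its domain.
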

\begin{proof}
Property (iii) is obviously, and property (ii) follows from Lemma \ref{lem:psiplusks}. For property (i), note that $\psi_{k,0}$ is defined as the minimal of functions $\psi$'s which satisfy the first order ODE (\ref{eqn:ODEpsimu0}). So the derivative of $\psi_{k,0}$ are bounded on $[0, D/2]$ and thus $\psi_{k,0}$ is Lipschitz continuous.

For the rest of the proof, we assume that $\psi_{k,0}$ is different from the function $\tilde\psi_{k,0}$. Note that for any $j\leq k-1$, we have $\tilde\psi_{j,0}(0) = \tilde\psi_{k,0}(0) = 0$ and 
\[
\tilde\psi_{j,0}(D/2) = -j > -k = \tilde\psi_{k,0}(D/2).
\]
From the ODE comparison of (\ref{eqn:ODEpsimu0}), it follow that 
\[
\tilde\psi_{k,0}(z) < \tilde\psi_{j,0}(z) \leq \psi_{j,s(j)}^+ (z)
\] 
for any $z\in (0, D/2]$. So we have $\psi_{k,0}\geq \tilde\psi_{k,0}$ on $[0, D/2]$. If $\psi_{k,0}(z_0) = \tilde\psi_{k,0}(z_0)$ for some $z_0\in (0, D/2)$, then $\psi_{k,0}(z_0) = \psi^+_{k,s(k)}(z_0)$. So we have $s(k) = 0$ and then $\psi_{k,0} = \tilde\psi_{k,0}$ on $[0, D/2]$. This finishes the proof. 
\end{proof}

\medskip

\section{The parabolic equation with initial-boundary condition $\psi_{k,0}$}

In this section and Section 5, we assume that $\psi_{k,0}$ is different from $\tilde\psi_{k,0}$. It follows from Proposition \ref{prop:psik0initialdata} that $\psi_{k,0}(z) > \tilde\psi_{k,0}(z)$ for all $z\in (0, D/2)$.  In this section, we show the existence of solution $\psi_k(z,t)$ with the initial data $\psi_{k,0}$ constructed in Section 3. See Theorem \ref{thm:uPDEcont} and Corollary \ref{cor:psikPDEcont}.

Consider the following initial-boundary value problem: 
\begin{equation}\label{eqn:psikPDE}
\left\{
\begin{array}{rcl}
\medskip
\dfrac{\pd}{\pd t}\psi_k & = & \psi_k'' + 2\psi_k' \psi_k -(n+1)\tan(z) \psi_k'  \\
\medskip
& & -2\tan(z) \psi_k^2 - (n-1)\pset{1-\tan^2(z)} \psi_k - 2\mu_0 \tan(z) \\
\medskip
& & \text{ on }(0, D/2) \times (0, \infty); \\
\medskip
\text{with}& &\psi_k(\cdot, 0) = \psi_{k,0}(\cdot), \\
& & \psi_k(0, t) = 0 \qand \psi_k(D/2, t) = -k.
\end{array}
\right.
\end{equation}
Here $\psi_{k,0}$ is the piecewise smooth function in (\ref{eqn:psik0}), and $\, '$, $\, ''$ stand for the spatial derivatives. Let 
\begin{equation}\label{eqnukpsik}
u(z,t) = \psi_k(z, t) - \tilde\psi_{k,0} \qand u_{0}(z) = \psi_{k,0}(z) - \tilde\psi_{k,0}(z).
\end{equation}
Then (\ref{eqn:psikPDE}) in $\psi_k$ is equivalent to the following one in $u$:
\begin{equation}\label{eqn:uPDE}
\left\{
\begin{array}{rcl}
\dfrac{\pd}{\pd t}u & = & u'' + 2 u u' + \sqset{2\tilde\psi_{k,0}(z)-(n+1)\tan(z)} u' - 2\tan(z) u^2 \\
& & + \sqset{2\tilde\psi_{k,0}'(z) - 4 \tan(z) \tilde\psi_{k,0}(z) - (n-1)\pset{1-\tan^2(z)} } u \smallskip \\
& & \text{ on }(0, D/2) \times (0, \infty); \medskip \\
\text{with}& & u (\cdot, 0) = u_{0}(\cdot),  \\
& & u(0, t) =  u(D/2, t) = 0.
\end{array}
\right.
\end{equation}
Let $T > 0$ be a positive time. Denote $R_T = (0, D/2)\times (0, T]$ the rectangle (including the top at $t=T$), its side
\[
\SRT= \pset{\set{z=0} \times (0, T)} \cup \pset{\set{z=D/2}\times (0, T)}
\]
 and parabolic boundary
\[
\PRT = \pset{\set{z = 0}\times [0, T]} \cup \pset{[0, D/2]\times \set{ t= 0}} \cup \pset{\set{z = D/2}\times [0, T]}.
\] 
A function $u \in C^{2,1}\pset{R_T}$ means that $u$, $u'$, $u''$ and $\pd_t u$ exist and continuous on $R_T$. A solution $u$ of the initial-boundary value problem (\ref{eqn:uPDE}) is called \emph{classical} if $u \in C^{2,1}(R_T)\cap C\pset{\bar R_T}$.

Consider the following operator 
\begin{equation}\label{eqn:Pu}
P u  = - u_t + a^{11}u_{zz} + a(Z, u, u_z)
\end{equation}
with $Z = (z, t)$, $a^{11} = 1$ and
\begin{equation}\label{eqn:aZqp}
a(Z, q, p) = 2 q p + a_1(z) p -2\tan(z) q^2  + a_2(z) q
\end{equation}
with
\begin{eqnarray}
a_1(z) & = & 2\tilde\psi_{k,0}(z) - (n+1)\tan(z) \label{eqn:a1} \\
a_2(z) & = & 2\tilde\psi'_{k,0}(z) - 4\tan(z) \tilde\psi_{k,0}(z) - (n-1)\pset{1-\tan^2(z)}, \label{eqn:a2}
\end{eqnarray}
for $(Z, q, p) \in (0, D/2)\times (0, T) \times \Real\times \Real^1$. First we prove a comparison principle for the semi-linear parabolic operator $P$ in (\ref{eqn:Pu}). 
\begin{lem}[Comparison principle]\label{lem:comparison}
Suppose that $u, v$ are functions in $C^{2,1}(R_T) \cap C\pset{\bar R_T}$ such that $P u \geq P v$ in $R_T$ and $u \leq v$ on $\PRT$. Assume that either $u_z$ or $v_z$ has an upper bound on $R_T$, then $u \leq v$ on $\bar R_T$.
\end{lem}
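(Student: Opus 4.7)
The plan is to reduce the inequality $Pu \geq Pv$ to a linear parabolic differential inequality in $w := u-v$ whose zero-order coefficient is bounded above, and then apply the standard weak maximum principle after multiplying by $e^{-\alpha t}$ with $\alpha$ large.

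First I would use the algebraic identity
\[
2uu_z - 2vv_z \;=\; 2u_z(u-v) + 2v(u_z - v_z) \;=\; 2v_z(u-v) + 2u(u_z-v_z)
\]
together with $u^2 - v^2 = (u+v)(u-v)$ to rewrite $a(Z,u,u_z) - a(Z,v,v_z)$ as $B\,(u_z - v_z) + C\,(u-v)$, where $B$ and $C$ are explicit bounded functions on $\bar R_T$. Specifically, if $u_z$ is bounded above on $R_T$ I would take the first decomposition and obtain
\[
B = 2v + a_1(z), \qquad C = 2u_z - 2\tan(z)(u+v) + a_2(z),
\]
while if $v_z$ is bounded above I would use the other decomposition, with $B = 2u + a_1(z)$ and $C = 2v_z - 2\tan(z)(u+v) + a_2(z)$. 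Because $u$ and $v$ are continuous on the compact set $\bar R_T$, because $\tan(z)$ is bounded on $[0,D/2]$ (since $D<\pi$), and because $\tilde\psi_{k,0}$ is Lipschitz (by Proposition~\ref{prop:psik0initialdata}(i)) so that $a_1,a_2$ are bounded, the hypothesis on $u_z$ or $v_z$ forces $C$ to have a finite upper bound $C_0$ on $R_T$.

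Next I would set $\tilde w(z,t) := e^{-\alpha t}(u-v)(z,t)$ for a constant $\alpha > C_0$ to be chosen. A direct computation turns the inequality $Pu - Pv \geq 0$ into
\[
\tilde w_t \;\leq\; \tilde w_{zz} + B\,\tilde w_z + (C - \alpha)\,\tilde w \qquad\text{on } R_T,
\]
with $C - \alpha < 0$ on $R_T$. The boundary hypothesis gives $\tilde w \leq 0$ on $\PRT$.

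The conclusion would follow from the weak maximum principle: suppose for contradiction that $\max_{\bar R_T}\tilde w > 0$. Since $\tilde w$ is continuous on the compact set $\bar R_T$ and nonpositive on $\PRT$, the maximum is attained at some $(z_0,t_0) \in (0,D/2)\times (0,T]$. At such a point $\tilde w_z(z_0,t_0)=0$, $\tilde w_{zz}(z_0,t_0)\leq 0$, and $\tilde w_t(z_0,t_0) \geq 0$, so the inequality above yields
\[
0 \;\leq\; \tilde w_t(z_0,t_0) \;\leq\; (C(z_0,t_0)-\alpha)\,\tilde w(z_0,t_0) \;<\; 0,
\]
a contradiction. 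Hence $\tilde w \leq 0$, i.e. $u\leq v$, on $\bar R_T$. The main technical obstacle is simply ensuring that $C$ admits a deterministic upper bound; this is exactly where the one-sided gradient bound on $u_z$ or $v_z$ in the hypothesis enters, and is the reason the semi-linear quadratic nonlinearities $2\psi\psi'$ and $-2\tan(z)\psi^2$ do not spoil the comparison.
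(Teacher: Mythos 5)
Your proof is correct and takes essentially the same route as the paper: rewrite $Pu - Pv$ as a linear differential inequality in $u-v$, tilt by an exponential factor in $t$ (the paper writes $w=(u-v)e^{\lambda t}$ with $\lambda$ chosen very negative, you write $e^{-\alpha t}(u-v)$ with $\alpha$ large positive — the same thing), and derive a contradiction at a hypothetical interior positive maximum, where the first-order term drops out and the one-sided bound on $u_z$ (or $v_z$) is exactly what makes the zero-order coefficient bounded above. One small slip: you cite Proposition~\ref{prop:psik0initialdata}(i) to justify that $a_1,a_2$ are bounded, but that proposition concerns $\psi_{k,0}$, not $\tilde\psi_{k,0}$; the correct justification is simply that $\tilde\psi_{k,0}=(\log\tilde\phi_{0,1/k})'$ is smooth on $[0,D/2]$ since $\tilde\phi_{0,1/k}$ is a positive smooth Robin eigenfunction there (Lemma~\ref{lem:RobinEigenfunction}).
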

\begin{proof}
The argument is similar to Theorem 9.1 and Corollary 9.2 in \cite{Lieberman}. Assume that $u_z$ is bounded from above. Let $M$ be an upper bound of $\abs{u}$, $\abs{v}$ and $u_z$ on $R_T$, and set $w = (u -v)e^{\lambda t}$ for some constant $\lambda$ to be determined. We have $w \leq 0$ on $\PRT$. Assume that $w$ has the positive maximum at $Z_0 = (z_0, t_0) \in R_T$. Then we have
\[
u_z(Z_0) = v_z(Z_0), \quad u_{zz}(Z_0) - v_{zz}(Z_0) \leq 0 \qand (u-v)_t(Z_0) + \lambda (u-v)(Z_0) \geq 0 
\]
Let $m = u(Z_0) - v(Z_0) > 0$. It follows that 
\begin{eqnarray*}
0 & \leq & Pu(Z_0) - Pv(Z_0) \\
& = & -(u -v)_t (Z_0) + (u - v)_{zz}(Z_0) + a\pset{Z_0, u(Z_0), u_z(Z_0)} - a\pset{Z_0, v(Z_0), v_z(Z_0)} \\
& \leq & \lambda m + 2 u_z(Z_0) m - 2\tan(z_0)(u+v)(Z_0) m + a_2(z_0) m \\
& = & m \pset{\lambda + 2u_z(Z_0) - 2\tan(z_0) (u+v)(Z_0) + a_2(z_0)}.
\end{eqnarray*}
This leads a contradiction if we take 
\[
\lambda < - 2M -4M \tan(D/2) - \sup_{z\in [0, D/2]}\abs{a_2(z)}.
\]
The proof when $v_z$ is bounded above is similar and so we finish the proof. 
\end{proof}

From the comparison principle, we deduce that any solution $u$ of (\ref{eqn:uPDE}) is between the supersolution $u = u_0$ and the subsolution $u = 0$. 
\begin{lem}\label{lem:ubounds}
If $u$ is a classical solution to the initial-boundary value problem (\ref{eqn:uPDE}) for some $T > 0$,  then we have
\begin{equation}
0 \leq u(z, t) \leq u_0(z)
\end{equation}
for all $(z, t) \in [0, D/2] \times [0, T]$, and equivalently, 
\begin{equation}
\tilde\psi_{k,0}(z) \leq \psi_{k}(z, t) \leq \psi_{k,0}(z).
\end{equation}
\end{lem}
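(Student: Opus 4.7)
The plan is to apply the comparison principle in Lemma~\ref{lem:comparison} twice: once with the constant function $0$ as barrier (for the lower bound), and once against a family of smooth, time-independent barriers built from stationary solutions (for the upper bound). The key structural observation, coming directly from the construction of $\psi_{k,0}$ in Section~3, is that $u_0$ is itself the pointwise minimum of finitely many \emph{smooth} stationary solutions of the equation~(\ref{eqn:uPDE}).

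For the lower bound $u\geq 0$, I would compare $u$ against $v\equiv 0$. Since $a(Z,0,0)=0$ by (\ref{eqn:aZqp}), one has $P(0)=0=Pu$. On the parabolic boundary $\PRT$, the condition $u(z,0)=u_0(z)\geq 0$ from Proposition~\ref{prop:psik0initialdata}, together with $u(0,t)=u(D/2,t)=0$, gives $0\leq u$ on $\PRT$. Since $\pd_z 0$ is trivially bounded, applying Lemma~\ref{lem:comparison} with $0$ in the role of $u$ and our $u$ in the role of $v$ yields $0\leq u$ on $\bar R_T$.

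For the upper bound $u\leq u_0$, I would use the decomposition $\psi_{k,0}=\min_{1\leq i\leq N}\chi_i$, where the $\chi_i$ range over the finitely many smooth building blocks of the form $\psi^L_{c(1/j)+s(j)}$ or $\psi^R_{j,c(1/j)-s(j)}$ for $1\leq j\leq k$. Each $\chi_i$ solves the first-order ODE (\ref{eqn:ODEpsimu0}), and by the identity in Remark~\ref{rem:stationarysolutionlambda0} (with $\mu_0$ in place of $\lambda_0$) this makes $\chi_i$ a stationary solution of the parabolic equation~(\ref{eqn:psipdemu0}). Setting $v_i(z,t):=\chi_i(z)-\tilde\psi_{k,0}(z)$, the change of variable (\ref{eqnukpsik}) makes each $v_i$ a smooth, time-independent classical solution of $Pv_i=0$. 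On $\PRT$ one checks $v_i\geq u$: at $t=0$, $v_i(z,0)=\chi_i(z)-\tilde\psi_{k,0}(z)\geq\psi_{k,0}(z)-\tilde\psi_{k,0}(z)=u_0(z)$ by minimality; at $z=0$, $v_i(0,t)=\chi_i(0)\geq\psi_{k,0}(0)=0$; at $z=D/2$, $v_i(D/2,t)=\chi_i(D/2)+k\geq\psi_{k,0}(D/2)+k=0$. Since $v_i$ is smooth on $[0,D/2]$, its spatial derivative is bounded, so Lemma~\ref{lem:comparison} delivers $u\leq v_i$ on $\bar R_T$ for every $i$. Taking the pointwise minimum over $i$ gives $u\leq\min_i v_i=\psi_{k,0}-\tilde\psi_{k,0}=u_0$, equivalently $\psi_k\leq\psi_{k,0}$.

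The main subtlety is that $u_0$ itself is only piecewise smooth, so Lemma~\ref{lem:comparison}, which requires $C^{2,1}$ regularity, cannot be applied directly with $u_0$ as the barrier; the corners where different pieces of the minimum meet would cause trouble. The device of comparing against each smooth building block $v_i$ individually, and taking the minimum only \emph{after} the comparison argument, is what cleanly sidesteps this regularity issue.
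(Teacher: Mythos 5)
Your overall strategy is the same as the paper's: handle the lower bound by comparing with the zero subsolution, and handle the upper bound by writing $u_0$ as a minimum of stationary solutions of (\ref{eqn:uPDE}), comparing $u$ against each piece separately via Lemma~\ref{lem:comparison}, and taking the minimum afterwards so as to sidestep the nonsmoothness of $u_0$ at the corners. The lower-bound argument and the boundary checks are fine.

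However, there is a genuine gap in the upper-bound argument. You assert that each building block $\chi_i$, and hence each $v_i=\chi_i-\tilde\psi_{k,0}$, is \emph{smooth on all of $[0,D/2]$}, so that its spatial derivative is bounded on $R_T$ and Lemma~\ref{lem:comparison} applies directly. This is true for the $\psi^L$-pieces (cf.\ the bound (\ref{eqn:psiLupperbound})), but it fails in general for the $\psi^R$-pieces: as the paper points out just before introducing the $g_\beta$, the upper bound (\ref{eqn:psiRupperbound}) only holds for $z$ in a right subinterval, and $\psi^R_{j,c}$ may be defined only on some $(z_0,D/2]$ with $z_0\in(0,D/2)$, blowing up to $+\infty$ as $z\to z_0^+$. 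For such a piece the barrier $v_i$ is neither defined on all of $[0,D/2]$ nor does it have bounded spatial derivative on its maximal domain, so the hypothesis of Lemma~\ref{lem:comparison} is not met on $R_T$ and your comparison step does not go through. The paper's proof explicitly separates the finitely many building blocks into globally smooth ones $f_\alpha$ and blowing-up ones $g_\beta$, and for each $g_\beta$ it first uses continuity and boundedness of $u$ to find $\tilde z>z_\beta$ with $u(\tilde z,t)<g_\beta(\tilde z)$ for all $t\in[0,T]$, then applies Lemma~\ref{lem:comparison} on the smaller rectangle $[\tilde z,D/2]\times[0,T]$, where $g_\beta'$ is bounded. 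Your argument needs this extra localization step to cover the blowing-up pieces.
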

\begin{proof}
The lower bound of $u$ follows directly from Lemma \ref{lem:comparison}. Next we consider the upper bound. 

From the construction in Section 3, $\psi_{k,0}(z)$ is given by either $\psi^L_c(z)$ or $\psi^R_{j, c}(z)$ for certain value of $c$ and $j \leq k$. These $\psi^L$ and $\psi^R$'s are bounded below by $\tilde\psi_{k,0}$. From the upper bound (\ref{eqn:psiLupperbound}), each $\psi^L_c$ is smooth on $[0, D/2]$. From the upper bound (\ref{eqn:psiRupperbound}), if $\psi^R$ is not smooth on $[0, D/2]$, then it is defined on $(z_0, D/2]$ for some $z_0 \in (0, D/2)$, and blows up to $\infty$ as $z\To z_0$. Since $u_0 = \psi_{k,0} - \tilde\psi_{k,0}$, it follows that there is a finite collection of smooth functions $\set{f_\alpha}_{\alpha=1}^{N_1}$ on $[0, D/2]$, and $\set{g_\beta}_{\beta=1}^{N_2}$ on $(z_\beta, D/2]$ such that
\begin{enumerate}
\item all $f_\alpha$ and $g_\beta$ are stationary solutions to the equation (\ref{eqn:uPDE}), and
\item we have 
\[
u_{0}(z) = \min\set{f_1(z), \cdots, f_{N_1}(z), g_1(z), \cdots, g_{N_2}(z)}, \text{ for all } z\in [0, D/2].
\]
\end{enumerate}
Here we assume $g_\beta(z) = \infty$ if $z\leq z_\beta$. 

Since $P u = 0$ and $P f_\alpha = 0$ in $R_T$, $u_0 \leq f_\alpha$ on $\PRT$, and $f_\alpha'(z)$ is bounded from above on $[0, D/2]$, it follows from Lemma \ref{lem:comparison} that $u(z, t) \leq f_\alpha(z)$ on $\bar R_T$. Next we show that $u(z,t)\leq g_\beta(z)$ for any $(z, t)\in (z_\beta, D/2]\times [0, T]$. Note that $u$ is continuous on $R_T$, and $g_\beta$ approaches to $\infty$ as $z\To z_\beta$. So there is $\tilde z > z_\beta$ such that $u(\tilde z, t) < g_\beta(\tilde z)$ for all $t \in [0, T]$. It follows that $u(z, t) \leq g_\beta(z)$ on the parabolic boundary of $[\tilde z, D/2]\times [0, T]$. Note that $g'_\beta(z)$ is bounded from above on $[\tilde z, D/2]$. It follows from Lemma \ref{lem:comparison} again, that $u(z,t) \leq g_\beta(z)$ on $[\tilde z, D/2]\times [0, T]$ and then $(z_\beta, D/2]\times [0, T]$. In summary, for any $t\in [0, T]$, the solution $u(z, t) \leq f_\alpha$ and $g_\beta$ on their domains, and so $u(z, t)\leq u_{0}(z)$ on $\bar R_T$. So we finish the proof. 
\end{proof}

We solve the initial-boundary value problem (\ref{eqn:uPDE}) in the following steps:
\begin{enumerate}[(i)]
\item Choose a number $\alpha \in (0, 1)$. Fix $\eps > 0$ and choose $u_0^\eps \in C^{2+\alpha}([0, D/2])$ such that 
\begin{eqnarray}\label{eqn:u0eps}
& 0\leq u_0^\eps \leq u_0; \quad \sup_{[0, D/2]} \abs{u_0^\eps - u_0}\leq \eps; \text{ and } & \\
&  u_0^\eps(z) = u_0(z) \, \text{ for all } z\in [0, \kappa]\cup [D/2 - \kappa, D/2] & \nonumber
\end{eqnarray}
where $\kappa > 0$ is a constant independent of $\eps$. Let $L_0 >0$ be the Lipschitz constant of $u_0$ on $[0, D/2]$. We also assume that 
\begin{equation}\label{eqn:u0epsLip}
\sup_{z \in [0, D/2]} \abs{\pset{u^\eps_0}'(z)} \leq 2L_0 \quad \text{for any } \eps.  
\end{equation}  
\item Solve problem (\ref{eqn:uPDE}) for the initial condition $u^\eps_0 \in C^{2+\alpha}$. 
\item Solve for $u_0$ using the approximating solutions in the previous step.
\end{enumerate}
We leave the details of step (ii) above in Appendix A, see Theorem \ref{thm:uPDEHolder}, since the existence of such solution $u^\eps\in C^{2,1}\pset{R_T}\cap C\pset{\bar R_T}$ follows from the standard theory of quasi-linear parabolic equation. Here we also collect the a prior estimates which will be used in step (iii). 

\begin{thm}\label{thm:existenceueps}
For small $\eps > 0$, suppose the initial data $u^\eps_0\in C^{2+\alpha}([0, D/2])$ satisfies the assumptions in (\ref{eqn:u0eps}) and (\ref{eqn:u0epsLip}). Then the initial-boundary value problem in (\ref{eqn:uPDE}) has a classical solution $u^\eps = u^\eps(z, t)$ with $u^\eps(\cdot, 0) = u_0^\eps(\cdot)$. Moreover, the solution $u^\eps$ has the following properties.
\begin{enumerate}[(i)]
\item (Maximum estimate) For any small $\eps > 0$, we have 
\[
\sup_{\bar R_T} \abs{u^\eps} \leq C_1 \quad \text{ with } C_1 = \sup_{z\in [0, D/2]} \abs{u_0(z)}.
\]
\item (Local gradient estimate) Let $\beta_0$, $\beta_1$ be positive constants such that 
\begin{equation*}
\abs{a(Z, q, p)} \leq \beta_0 p^2 \quad \text{for } \abs{p} \geq \beta_1 \text{ and } (Z, q)\in R_T \times [-C_1, C_1].
\end{equation*}
Then for small $\rho > 0$, we have
\begin{equation*}
\abs{u^\eps_z} \leq \exp \pset{8 \beta_0 C_1} \pset{2L_0 + \beta_1 + \frac{8 C_1}{\rho}}
\end{equation*}
on $(\rho, D/2 - \rho)\times (0, T)$, where $L_0$ is the Lipschitz constant of $u_0$ on $[0, D/2]$.
\end{enumerate} 
\end{thm}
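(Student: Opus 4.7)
My plan is to defer the existence of $u^\eps$ to Appendix A, deduce (i) from the comparison argument already developed in Lemma \ref{lem:ubounds}, and prove (ii) by a classical Bernstein-type interior gradient estimate. Because $u_0^\eps$ is $C^{2+\alpha}$, the boundary values are constants, and the principal part $a^{11}\equiv 1$ of the operator $P$ in (\ref{eqn:Pu})--(\ref{eqn:aZqp}) is uniformly parabolic, the existence of a classical $u^\eps \in C^{2,1}(R_T)\cap C(\bRT)$ follows from the standard Schauder/Leray--Schauder theory carried out in Theorem \ref{thm:uPDEHolder}, provided the a priori bounds (i) and (ii) are in hand.

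\emph{Maximum estimate (i).} I would imitate the proof of Lemma \ref{lem:ubounds} with $u^\eps$ and $u_0^\eps$ in place of $u$ and $u_0$. Since $0 \leq u_0^\eps$ by (\ref{eqn:u0eps}), comparison against the subsolution $v\equiv 0$ via Lemma \ref{lem:comparison} yields $u^\eps \geq 0$. For the upper bound, recall from the proof of Lemma \ref{lem:ubounds} that $u_0$ is the pointwise minimum of a finite collection of stationary solutions $\{f_\alpha, g_\beta\}$ of $Pw = 0$; since $u_0^\eps \leq u_0$, comparing $u^\eps$ separately against each $f_\alpha$ on $\bRT$ and against each $g_\beta$ on $[\tilde z, D/2]\times[0,T]$ (for any $\tilde z > z_\beta$) gives $u^\eps \leq u_0 \leq C_1$.

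\emph{Gradient estimate (ii).} Here I would apply a Bernstein technique. Let $\eta(z)$ be a smooth spatial cutoff with $\eta \equiv 1$ on $[\rho, D/2-\rho]$, $\eta \equiv 0$ near $\{0, D/2\}$ and $\abs{\eta'}\leq 4/\rho$, and consider the auxiliary function
\[
W(z,t) = \eta^2(z)\, \pset{u^\eps_z(z,t)}^2 \, e^{-\lambda u^\eps(z,t)},
\]
where $\lambda$ is chosen proportional to $\beta_0$. Differentiating (\ref{eqn:uPDE}) once in $z$ yields a linear parabolic equation for $v := u^\eps_z$; evaluating it at an interior maximum $Z^\ast$ of $W$ over $\bRT$, and using the growth bound $\abs{a(Z,q,p)}\leq \beta_0 p^2$ for $\abs{p}\geq \beta_1$ to absorb all lower-order terms into the $\lambda\beta_0 v^2$ contribution coming from the exponential weight, one concludes that either $\abs{v(Z^\ast)}\leq \beta_1$ or $\eta(z^\ast)\abs{v(Z^\ast)}\leq 8C_1/\rho$. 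If instead the maximum lies on $\PRT$, then either $\eta = 0$ (which is irrelevant on $\{\eta\equiv 1\}$) or $t = 0$, where $\abs{v(\cdot, 0)}\leq 2L_0$ by (\ref{eqn:u0epsLip}). Combining these alternatives and restoring the factor $e^{\lambda C_1}\leq e^{8\beta_0 C_1}$ yields the asserted bound on $(\rho, D/2-\rho)\times(0,T)$.

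I expect the Bernstein computation to be the main obstacle: one must check that the $q^2$-nonlinearity $-2\tan(z) u^2$ and the $qp$-coupling $2uu_z$ in (\ref{eqn:aZqp}) do not disrupt the sign of the quadratic-in-$v$ term when $W$ is differentiated and evaluated at its maximum. Smoothness of $\tan(z)$ and of $\tilde\psi_{k,0}$ on $[0, D/2]$ (which uses $D<\pi$) keeps $a_1, a_2$ uniformly bounded, so all cross terms can be absorbed into $\lambda\beta_0 v^2$ by enlarging $\lambda$; the bookkeeping is delicate but routine.
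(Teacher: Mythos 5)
Your plan coincides with the paper's: existence is delegated to Theorem~\ref{thm:uPDEHolder}, the maximum estimate follows from the comparison argument of Lemma~\ref{lem:ubounds}, and the local gradient estimate is a Bernstein-type interior bound --- the paper simply cites Theorem~11.17 of \cite{Lieberman} (together with the uniform Lipschitz bound (\ref{eqn:u0epsLip}) on $u_0^\eps$) at that point rather than writing out the cutoff/exponential-weight computation you sketch. The ingredients and constants you identify ($C_1$, $\beta_0$, $\beta_1$, $L_0$, cutoff scale $\rho$) are exactly those entering the cited theorem, so the two proofs are essentially identical in substance.
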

\begin{proof}
The statement other than the one in (ii) is already in Theorem \ref{thm:uPDEHolder}. The local gradient estimate in (ii) follows from Theorem 11.17 in \cite{Lieberman} and the uniform Lipschitz constant of $u^\eps_0$ in assumption (\ref{eqn:u0epsLip}).
\end{proof}

From the standard argument of the existence for continuous initial data, see for example \cite{AndrewsClutterbuckTime}, we have 

\begin{thm}\label{thm:uPDEcont}
Consider the initial-boundary value problem in (\ref{eqn:uPDE}) with the initial data $u_0=\psi_{k,0} - \tilde\psi_{k,0}$.  Then there is a solution $u \in C^{\infty}\pset{(0, D/2)\times (0, T]} \cap C\pset{[0, D/2] \times [0, T]}$ for any $T > 0$. Moreover, the solution $u(z, t)$ is Lipschitz continuous in $z$ on the interval $[0, D/2]$, and the Lipschitz constant is independent of $t\in [0, \infty)$.
\end{thm}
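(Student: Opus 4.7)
The plan is to construct $u$ as a subsequential limit of the classical solutions $u^\eps$ supplied by Theorem \ref{thm:existenceueps}, and then promote the convergence up to the parabolic boundary $\PRT$. The local gradient estimate in Theorem \ref{thm:existenceueps}(ii), together with the uniform sup bound $|u^\eps|\leq C_1$, gives uniform control of $u^\eps_z$ on every compact subset of $(0, D/2)\times(0, T]$. Since the coefficients of (\ref{eqn:uPDE}) are smooth on $(0, D/2)$, standard interior Schauder theory applied iteratively produces uniform $C^{k}_{\mathrm{loc}}$ bounds on $\{u^\eps\}$ of every order on any compact subset of the open rectangle. A diagonal Arzel\`a--Ascoli extraction yields $u^{\eps_j}\to u$ in $C^\infty_{\mathrm{loc}}\bigl((0, D/2)\times(0, T]\bigr)$, with $u$ a classical solution of (\ref{eqn:uPDE}) on the interior; and $0\leq u\leq u_0$ on $\bar R_T$ by passing to the limit in Lemma \ref{lem:ubounds}.

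The main obstacle is establishing continuity of $u$ on $\bar R_T$ with $u|_{\PRT}=u_0$; this amounts to an $\eps$-uniform modulus of continuity for $u^\eps$ near $\PRT$. Near the spatial boundaries $z=0$ and $z=D/2$, I would use the stationary solutions constructed in Section~3 (the $\psi^L_c$ and $\psi^R_{k,c}$, shifted by $-\tilde\psi_{k,0}$) as explicit barriers for the operator $P$ of (\ref{eqn:Pu}): for any tolerance $\sigma>0$, one can select nearby sub- and supersolutions that agree with the boundary data and differ by at most $\sigma$ locally, and Lemma \ref{lem:comparison} (whose Lipschitz hypothesis holds for these smooth barriers) traps $u^\eps$ between them. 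Near $t=0$, the uniform Lipschitz bound (\ref{eqn:u0epsLip}) allows comparison with barriers of the form $u^\eps_0(z)\pm C(\eta + t/\eta)$ for $\eta\to 0$, where $C$ depends only on $L_0$, $C_1$, and the coefficients; the super/subsolution properties are verified by direct substitution into $P$. Combined, these estimates give a modulus of continuity at $\PRT$ independent of $\eps$, so $u\in C(\bar R_T)$ with $u|_{\PRT}=u_0$.

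For the Lipschitz estimate in $z$ uniform in $t$, I would perform a two-point maximum principle on
\[
Q(y, z, t) = u^\eps(y, t) - u^\eps(z, t) - L(y - z), \qquad 0 \leq z \leq y \leq D/2,
\]
with $L$ chosen in terms of $L_0$, $C_1$, and $\sup_{[0,D/2]}(|a_1|+|a_2|+\tan(D/2))$ but independent of $T$. On the parabolic boundary of the triangular prism $\{0\leq z\leq y\leq D/2\}\times[0,T]$ one has $Q\leq 0$: at $t=0$ by (\ref{eqn:u0epsLip}) (taking $L\geq 2L_0$); on $\{y=z\}$ trivially; and at the corner $z=0$, $y=D/2$ since $u^\eps(0,t)=u^\eps(D/2,t)=0$ (taking $L\geq 2C_1/D$). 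If $Q$ attained a positive interior maximum at $(y_0,z_0,t_0)$, first-order conditions would force $u^\eps_z(y_0,t_0)=u^\eps_z(z_0,t_0)=L$ while $u^\eps_{zz}(y_0,t_0)\leq 0\leq u^\eps_{zz}(z_0,t_0)$; plugging these into (\ref{eqn:uPDE}) and computing $\partial_t Q(y_0,z_0,t_0)$ produces a strictly negative upper bound for $L$ chosen sufficiently large, a contradiction. This yields a Lipschitz bound on $u^\eps(\cdot,t)$ in $z$ uniform in $\eps$ and in $t\in[0,T]$, and passage to the limit gives the same for $u(\cdot,t)$. Since all constants in the argument are time-independent, the global-in-time solution on $[0,\infty)$ is obtained by iterating the estimates on successive intervals $[nT,(n+1)T]$, and the Lipschitz constant remains independent of $t\in[0,\infty)$.
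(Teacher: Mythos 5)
Your interior argument (diagonal extraction via the uniform sup bound, the local gradient estimate, Schauder bootstrap) matches the paper, and so does the pinching $0\le u\le u_0$. The two places where the route departs both contain gaps.

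\textbf{The barrier near $t=0$ does not have the claimed supersolution property.} You propose barriers of the form $w(z,t)=u_0^\eps(z)\pm C(\eta+t/\eta)$ with $C$ independent of $\eps$. Computing $Pw$ with the operator $P$ of (\ref{eqn:Pu}) gives $Pw = -C/\eta + (u_0^\eps)''(z) + a\bigl(z,\,u_0^\eps(z)+C(\eta+t/\eta),\,(u_0^\eps)'(z)\bigr)$; the $a(\cdot)$ term is controlled by $L_0$ and $C_1$, but $(u_0^\eps)''$ is \emph{not} uniformly bounded in $\eps$ --- the initial data $u_0=\psi_{k,0}-\tilde\psi_{k,0}$ is only Lipschitz (piecewise smooth with corners), so any $C^{2+\alpha}$ approximation $u_0^\eps$ at distance $\eps$ must have $(u_0^\eps)''$ blowing up at rate $\sim\eps^{-1}$ near the corners. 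Hence $Pw\le 0$ cannot be guaranteed with an $\eps$-independent $C$, and the modulus of continuity at $t=0$ you extract is not uniform. The paper avoids this precisely by freezing $u_0^\eps$ at the reference point $z_0$ and replacing $u_0^\eps(z)$ by the self-similar profile $u_0^\eps(z_0)+\tau+\mu_\tau t+2L_0\sqrt{t}\,\phi\bigl(|z-z_0|/\sqrt{t}\bigr)$ with $\phi=\frac{2}{\sqrt{\pi}}e^{-z^2/4}+z\Erf(z/2)$; since $\sqrt{t}\,\phi(|z-z_0|/\sqrt{t})$ solves the heat equation, the second spatial derivative of the barrier is absorbed by the time derivative and no $u_0''$ appears. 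Your stationary-solution barriers near $\SRT$ could plausibly be made to work, but since they too must match at the corners where $u_0$ is merely Lipschitz you would still need an Erf-type barrier there; the paper's Remark~\ref{rem:boundarygradient}/Lemma~\ref{lem:boundarygradestimate} handles this with an explicit logarithmic barrier, relying on $u_0$ being smooth on $[0,\kappa]\cup[D/2-\kappa,D/2]$.

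\textbf{The two-point maximum principle for the Lipschitz bound does not close.} At an interior positive maximum of $Q(y,z,t)=u^\eps(y,t)-u^\eps(z,t)-L(y-z)$ with $y_0>z_0$, the first-order conditions give $u^\eps_z(y_0)=u^\eps_z(z_0)=L$, and substituting into (\ref{eqn:uPDE}) yields
\begin{equation*}
\partial_t Q(y_0,z_0,t_0) \le 2L\bigl[u^\eps(y_0,t_0)-u^\eps(z_0,t_0)\bigr] + L\bigl[a_1(y_0)-a_1(z_0)\bigr] + \text{(terms bounded by $C_1$ and the coefficients)}.
\end{equation*}
Since at the maximum $u^\eps(y_0,t_0)-u^\eps(z_0,t_0)=Q^*+L(y_0-z_0)>0$, the leading term $2L\bigl[u^\eps(y_0)-u^\eps(z_0)\bigr]\ge 2L^2(y_0-z_0)>0$ grows quadratically in $L$ with the \emph{wrong} sign, and nothing in the equation produces a competing $-L^2$ term. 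So $\partial_t Q\ge 0$ cannot be contradicted by taking $L$ large; if anything, enlarging $L$ makes the estimate worse. The quadratic-in-gradient nonlinearity $2uu'$ is precisely why a naive two-point argument with a fixed slope $L$ fails here, and why the paper instead obtains the uniform-in-$t$ Lipschitz bound by combining the Bernstein-type interior gradient estimate (Theorem 11.17 of \cite{Lieberman}, which handles the natural growth condition $|a(X,q,p)|\le\beta_0 p^2$) with the boundary gradient estimate of Lemma~\ref{lem:boundarygradestimate}; both of those bounds are visibly time-independent, so no iteration over intervals $[nT,(n+1)T]$ is needed.
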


\begin{proof}
For each small $\eps > 0$, there is a classical solution $u^\eps$ with the initial condition $u^\eps_0$. The maximum estimate in Theorem \ref{thm:existenceueps} implies $\sup \abs{u^\eps} \leq C_1$ on $\bRT$, where $C_1$ is independent of $\eps$. Let $U \subset \subset R_T$ be an interior set. The local gradient estimate in Theorem \ref{thm:existenceueps} implies a bound $C_2$ of $\abs{u_z^\eps}$ on $U$ which is independent of $\eps$. Theorem 12.2 of \cite{Lieberman} provides a H\"older estimate of $u_z^\eps$ on $U$, and then Theorem 4.9 of \cite{Lieberman} provides a $C^{2+\alpha}$ estimate of $u^\eps$ on $U$. Note that all these estimates are independent of $\eps$. It follows that a subsequence of $u^\eps$ converges in $C^{2+\beta}(U)$ for some positive $\beta$ less than $\alpha$, to a limit $u$ as $\eps \To 0$. Then the regularity theory, see e.g., Theorem 10 on p. 72 in \cite{Friedman}, of the linear equation $L u = 0$ with $L$ in (\ref{eqn:Lbc}) implies that $u \in C^\infty(U)$. 

For the solution $u \in C(\bRT)$, we have to show that $u$ is continuous upon the parabolic boundary $\PRT$. The continuity of $u$ on the sides $\SRT$ follows from Remark \ref{rem:boundarygradient}. In fact, we have 
\[
\abs{u(z, t) - u(\xi,s)} \leq \tilde C_2 \pset{\abs{z-\xi} + \abs{t-s}^{\frac 1 2}}
\]
for any $(z, t)\in \SRT$ and $(\xi,s)\in R_T$, where $\tilde C_2$ is given in Equation (\ref{eqn:boundarygradientcont}). In the following, we show that $u(\cdot, t)\To u_0$ uniformly as $t\To 0$. 

The barriers we will use are very close to those (for $n=1$) in \cite[Theorem 11]{AndrewsClutterbuckTime}. Let $\phi: [0, \infty) \To \Real$ be the unique solution of 
\[
\phi''(z) = \frac{1}{2}\pset{\phi(z) - z \phi'(z)}
\]
with $\phi'(0) = 0$ and $\lim_{z\To \infty}\phi'(z) = 1$. In fact we have
\[
\phi(z) = \frac{2}{\sqrt{\pi}}e^{-\frac{z^2}{4}} + z \Erf(z/2)
\]
where $\Erf$ is the Gauss error function. For each $\tau > 0$, let 
\begin{equation}
w(z, t) = u_0^\eps (z_0) + \tau + \mu_{\tau} t + 2L_0 \sqrt{t} \phi\pset{\frac{\abs{z - z_0}}{\sqrt t}}.
\end{equation} 
Note that $w \geq 0$ for $t\geq 0$, and $\sqrt{t} \phi\pset{\abs{z-z_0}/{\sqrt{t}}}$ is bounded near $t = 0$. 
Let 
\[
A_0 = \sup_{(z, t)\in [0, D/2]\times[0, 1]} \sqrt{t}\phi \pset{\abs{z-z_0}/\sqrt t},
\]
\[
A_1 = \sup_{z\in [0, D/2]}\abs{a_1(z)} \qand A_2 = \sup_{z\in [0, D/2]} \abs{a_2(z)}.
\]
Choose $\mu_{\tau}$ large, say 
\begin{equation}\label{eqn:muvtaulowerbound}
\mu_{\tau} > 2 L_0 A_1 + \pset{A_2 + 4L_0} \pset{C_1 + \tau +1 + 2L_0 A_0},
\end{equation}
where $C_1>0$ is in Theorem \ref{thm:existenceueps}. For $0\leq t \leq \mu_{\tau}^{-1}$, we have
\[
w \leq u_0^\eps(z_0) + \tau + 1 + 2L_0 A_0.
\]
We show that $w$ is a super solution.  First we have
\[
w' = \pd_z w = 2\sgn(z-z_0)L_0 \phi'\pset{\frac{\abs{z-z_0}}{\sqrt{t}}} \leq 2L_0,
\]
and then 
\begin{eqnarray*}
P(w)  & = & -\mu_{\tau} + a(z, t , w, w') \\
& = & - \mu_{\tau} + 2 w w' + a_1(z) w'  - 2\tan(z) w^2 + a_2(z) w \\
& = & -\mu_{\tau} + 2\sgn(z - z_0) L_0 a_1(z) \phi' \\
& & -2\tan(z) w^2 + \sqset{a_2(z) + 4\sgn(z -z_0)L_0 \phi'} w \\
& \leq & -\mu_{\tau} + 2L_0 A_1 -2\tan(z) w^2 + \pset{A_2 + 4 L_0} w \\
& \leq & - \mu_{\tau} + 2L_0 A_1 + \pset{A_2 + 4L_0} \pset{u_0^\eps(z_0) + \tau +1 + 2L_0 A_0}.
\end{eqnarray*}
It follows that $P(w) < 0$. When $t\To 0$, we have  
\[
w(z, 0) \To u_0^\eps(z_0) + \tau + 2L_0 \abs{z - z_0}\geq u_0^\eps(z).
\]
Note that $\pd_z w$ is bounded from above by $2L_0$. Then the comparison principle in Lemma \ref{lem:comparison} implies that $w(z, t) \geq u^\eps(z, t)$ for $0\leq t \leq \mu_{\tau}^{-1}$. In particular, we have
\[
u^\eps(z_0, t) - u^\eps_0(z_0) \leq \tau + \mu_{\tau} t + 2L_0 \phi(0) \sqrt{t}.
\]
Similarly, the lower bound can be derived from the following subsolution 
\[
w^{-}(z, t) = u_0^\eps (z_0) - \tau - \mu_{\tau} t -  2L_0 \sqrt{t} \phi\pset{\frac{\abs{z-z_0}}{\sqrt t}}.
\]
Now we show the uniform convergence in time for $u^\eps$. Given $\tau > 0$, let 
\[
\delta(\tau) = \min \set{\frac{\tau}{\mu_\tau}, \frac{\tau^2}{4L_0^2 \phi(0)^2}}.
\]
Then for $0\leq t \leq \delta(\tau)$ and any $x\in [0, D/2]$, 
\[
\abs{u^\eps(z,t) - u^\eps_0(z)} \leq 3\tau.
\]
The rest follows exactly in Theorem 11 of \cite{AndrewsClutterbuckTime}. 

To finish the proof we show that $u(z,t)$ is Lipschitz continuous in the first coordinate. Since $\abs{u^\eps_z}$ is bounded uniformly in $\eps$ on any interior set, we only have to show that there is a uniform bound of the gradient near the set $\SRT$. It follows from Remark \ref{rem:boundarygradient} and the assumption of $u_0^\eps$ in (\ref{eqn:u0eps}). Note that in (ii) of Theorem \ref{thm:existenceueps} and inequality (\ref{eqn:boundarygradientcont}) of Remark \ref{rem:boundarygradient}, the upper bounds do not depend on $t$. So the Lipschitz constant of $u(\cdot, t)$ is independent of $t$. 
\end{proof}

Theorem \ref{thm:uPDEcont} has the following 

\begin{cor}\label{cor:psikPDEcont}
The initial-boundary value problem (\ref{eqn:psikPDE}) has a classical solution  
\[
\psi_k(z, t) \in C^{\infty}\pset{(0, D/2)\times (0, \infty)}\cap C\pset{[0, D/2]\times [0, \infty)}.
\]
given by $\psi_{k}(z,t) = u(z,t) + \tilde\psi_{k,0}(z)$. 
Moreover, the gradient $\psi_k' (z, t)$ is bounded on $(0, D/2)$, $\psi_k(z,t)$ is Lipschitz continuous in $z$-variable on $[0, D/2]$ and the Lipschitz constant is independent of $t$. 
\end{cor}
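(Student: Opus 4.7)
The corollary is essentially a translation result, so my plan is to use Theorem~\ref{thm:uPDEcont} directly and then transfer all of its conclusions back through the substitution $\psi_k(z,t) = u(z,t) + \tilde\psi_{k,0}(z)$. First I would apply Theorem~\ref{thm:uPDEcont} for any $T > 0$, obtaining a solution $u \in C^\infty((0, D/2)\times (0, T])\cap C([0, D/2]\times [0, T])$ of problem (\ref{eqn:uPDE}) with initial data $u_0 = \psi_{k,0} - \tilde\psi_{k,0}$. Since $T$ is arbitrary, the solution extends to all $t \geq 0$ by uniqueness (which follows from the comparison principle in Lemma~\ref{lem:comparison}), so I can regard $u$ as defined on $[0, D/2]\times [0,\infty)$.

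Next I would define $\psi_k(z,t) = u(z,t) + \tilde\psi_{k,0}(z)$ and check that this function solves problem (\ref{eqn:psikPDE}). The equivalence of equations (\ref{eqn:uPDE}) and (\ref{eqn:psikPDE}) under the substitution (\ref{eqnukpsik}) was already recorded in Section~4, so this is a direct computation: since $\tilde\psi_{k,0}$ is a stationary solution of (\ref{eqn:psikPDE}) (this is essentially Proposition~\ref{prop:uniquenesstpsi}, where $\tilde\psi_{k,0}$ satisfies the ODE (\ref{eqn:ODEpsimu0})), subtracting the stationary equation from the evolution equation for $\psi_k$ yields exactly (\ref{eqn:uPDE}). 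The regularity of $\psi_k$ follows immediately because $\tilde\psi_{k,0}$ is smooth on $[0, D/2]$: thus $\psi_k \in C^\infty((0, D/2)\times (0, \infty))\cap C([0, D/2]\times [0, \infty))$.

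For the boundary and initial conditions I would simply evaluate. Recalling from Lemma~\ref{lem:RobinEigenfunction} that $\tilde\phi_{0,1/k}'(0) = 0$ and $\tilde\phi_{0,1/k}(D/2) = 1/k$, $\tilde\phi_{0,1/k}'(D/2) = -1$, we have $\tilde\psi_{k,0}(0) = 0$ and $\tilde\psi_{k,0}(D/2) = -k$. Combined with $u(0, t) = u(D/2, t) = 0$, this gives $\psi_k(0,t) = 0$ and $\psi_k(D/2, t) = -k$. The initial condition $\psi_k(\cdot, 0) = \psi_{k,0}$ is immediate from $u(\cdot, 0) = u_0 = \psi_{k,0} - \tilde\psi_{k,0}$.

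Finally, for the Lipschitz and gradient bounds, Theorem~\ref{thm:uPDEcont} asserts that $u(\cdot, t)$ is Lipschitz on $[0, D/2]$ with constant uniform in $t$, and the local interior estimates show that $u'(\cdot, t)$ is bounded on $(0, D/2)$. Since $\tilde\psi_{k,0} \in C^2([0, D/2])$, adding $\tilde\psi_{k,0}'$ preserves both properties, and $\psi_k(\cdot, t) = u(\cdot, t) + \tilde\psi_{k,0}$ inherits a uniform-in-$t$ Lipschitz constant and a bounded gradient on $(0, D/2)$. Because the hard analytic work—the existence, continuity up to the parabolic boundary, and the uniform-in-$t$ Lipschitz estimate—has all been accomplished in Theorem~\ref{thm:uPDEcont}, I do not expect any genuine obstacle here; the only point requiring minor care is verifying that the boundary values of $\tilde\psi_{k,0}$ are exactly those demanded in (\ref{eqn:psikPDE}), which is precisely the content of Lemma~\ref{lem:RobinEigenfunction}.
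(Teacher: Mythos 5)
Your proof is correct and matches the paper's (implicit) approach: the paper states the corollary as an immediate consequence of Theorem~\ref{thm:uPDEcont}, and you have simply spelled out the translation $\psi_k = u + \tilde\psi_{k,0}$, using smoothness of $\tilde\psi_{k,0}$ on $[0,D/2]$ to transfer regularity, gradient bounds, and Lipschitz constants, and using the Robin data from Lemma~\ref{lem:RobinEigenfunction} to verify $\tilde\psi_{k,0}(0)=0$ and $\tilde\psi_{k,0}(D/2)=-k$. No issues.
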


\medskip

\section{Proof of Theorem \ref{thm:logconcavity}}

In this section, we prove Theorem \ref{thm:logconcavity} by showing the uniform convergence of the solutions $\psi_{k}(z,t)$, when $t \To \infty$ and $k \To \infty$. See the proof at the end of this section. 

First we show that the solution $\psi_{k}(z,t)$ of the initial-boundary value problem (\ref{eqn:psikPDE}) is monotone decreasing in $t$. Recall $u(z, t) = \psi_k(z, t) - \tilde\psi_{k,0}(z)$ in (\ref{eqnukpsik}). Let 
\begin{equation}\label{eqn:wpdtpsi}
w(z, t) =\pd_t \psi_{k}(z,t) = \pd_t u(z,t)
\end{equation}
and
\begin{eqnarray}
w_0(z)  & = & \psi_{k,0}''(z) + 2\psi'_{k,0}(z) \psi_{k,0}(z) - (n+1)\tan(z) \psi'_{k,0}(z) \nonumber \\
& & - 2\tan(z) \psi_{k,0}^2(z) - (n-1)\pset{1-\tan^2(z)}\psi_{k,0}(z) - 2\mu_0 \tan(z).\label{eqn:w0}
\end{eqnarray}
Note that $w_0 \in L^2\pset{[0, D/2]}$, $\psi'_{k,0}(z)$ and $\psi''_{k,0}(z)$ are derivatives in the weak sense. 

Differentiate equation in (\ref{eqn:psikPDE}) with respect to $t$, and then $w$ is a weak solution to the initial-boundary value problem of the following linear parabolic equation 
\begin{equation}\label{eqn:Lw}
\left\{
\begin{array}{rcl}
\medskip
L w  & = & -\pd_t w + w_{zz} + b(z) w_z + c(z) w = 0 \\
\medskip
& & \text{ on } (0, D/2) \times (0, \infty); \\
\medskip
\text{with} & & w(\cdot, 0) = w_0 \\
& & w(0, t) = w(D/2, t) = 0.
\end{array}
\right.
\end{equation}
where $w_0\in L^2\pset{[0, D/2]}$ is given in (\ref{eqn:w0}), and 
\begin{eqnarray*}
b(z) & = & 2\psi_k(z, t) - (n+1)\tan(z) \\
c(z) & = & 2\psi_k'(z,t) - 4\tan(z) \psi_k(z, t) - (n-1)\pset{1-\tan^2(z)}.
\end{eqnarray*}
From the comparison in Lemma \ref{lem:ubounds}, we have $w(z, 0) \leq 0$. This can also be verified directly for $w_0(z)$.  
\begin{lem}\label{lem:w0<=0}
We have $w_0(z) \leq 0$ in the weak sense. 
\end{lem}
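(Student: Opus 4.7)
The plan is to exploit the structure of $\psi_{k,0}$ as a finite pointwise minimum of smooth stationary solutions to the spatial part of (\ref{eqn:psikPDE}). Recall from Section 3 that
\[
\psi_{k,0}(z) = \min\set{\psi^+_{j,s(j)}(z):\, 1\leq j\leq k} = \min\set{f_1(z),\ldots, f_N(z)},
\]
where each $f_\alpha$ is either a branch $\psi^L_{c(1/j)+s(j)}$ or a branch $\psi^R_{j,c(1/j)-s(j)}$, hence solves the stationary ODE (\ref{eqn:ODEpsimu0}) for some constant $c$. By Remark \ref{rem:stationarysolutionlambda0} (with $\mu_0$ in place of $\lambda_0$), every such $f_\alpha$ makes the right-hand side of (\ref{eqn:psikPDE}) vanish identically. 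Therefore, on the interior of each smooth piece where $\psi_{k,0}(z)=f_\alpha(z)$, the expression for $w_0$ in (\ref{eqn:w0}) vanishes in the classical sense.

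Next I would analyze the finitely many transition points $0=z_0<z_1<\cdots<z_M<z_{M+1}=D/2$ where $\psi_{k,0}$ switches from one branch to another. Suppose $\psi_{k,0}=f_\alpha$ on $(z_{i-1},z_i)$ and $\psi_{k,0}=f_\beta$ on $(z_i,z_{i+1})$. Since $\psi_{k,0}$ is the pointwise minimum, $f_\alpha-f_\beta\leq 0$ on $(z_{i-1},z_i)$ and $f_\alpha-f_\beta\geq 0$ on $(z_i,z_{i+1})$, with equality at $z_i$. Hence $f_\alpha-f_\beta$ is nondecreasing through $z_i$, which gives
\[
f_\beta'(z_i)-f_\alpha'(z_i)\leq 0,
\]
i.e.\ the one-sided derivatives of $\psi_{k,0}$ jump downward at $z_i$. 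Consequently, in the distributional sense,
\[
\psi_{k,0}''(z) = \sum_{\alpha} f_\alpha''(z)\, \mathbf 1_{\{\psi_{k,0}=f_\alpha\}}(z) + \sum_{i=1}^{M}\pset{f_{\beta(i)}'(z_i)-f_{\alpha(i)}'(z_i)}\,\delta_{z_i},
\]
where the Dirac masses carry nonpositive coefficients. All the remaining terms in (\ref{eqn:w0}) involve only $\psi_{k,0}$ (continuous) and $\psi_{k,0}'$ (bounded, with at most jump discontinuities), so they produce only bounded measurable functions, contributing no singular part to $w_0$.

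Combining these observations, for any nonnegative test function $\eta\in C_c^\infty((0,D/2))$,
\[
\int_0^{D/2} w_0(z)\,\eta(z)\,dz = \sum_{\alpha}\int_{\{\psi_{k,0}=f_\alpha\}} 0\cdot\eta\,dz + \sum_{i=1}^{M}\pset{f_{\beta(i)}'(z_i)-f_{\alpha(i)}'(z_i)}\eta(z_i) \leq 0,
\]
since the classical part of $w_0$ vanishes on each branch and the singular part is a sum of nonpositive multiples of $\delta_{z_i}$. This yields $w_0\leq 0$ in the weak sense. The only subtlety I expect is bookkeeping: one must confirm that the $f_\alpha$'s are smooth on the relevant subintervals (true because each $\psi^L_c$ is smooth on $[0,D/2]$ and each $\psi^R_{j,c}$ is smooth wherever it is finite, as discussed in the proof of Lemma \ref{lem:ubounds}) and that no two switch points coincide, so that the decomposition of $\psi_{k,0}$ into branches is well defined; both follow from the explicit construction in Section 3.
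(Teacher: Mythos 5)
Your proposal is correct and takes essentially the same route as the paper: both observe that on each maximal smooth piece $\psi_{k,0}$ is a classical stationary solution so the bulk term vanishes, and both reduce $w_0$ to the singular contribution $\psi_{k,0}'(z_i+)-\psi_{k,0}'(z_i-)\le 0$ at the finitely many switch points, which is nonpositive because $\psi_{k,0}$ is a pointwise minimum of smooth functions. The paper carries this out by integrating the weak formulation by parts interval by interval, while you compute the distributional second derivative directly; these are two phrasings of the same argument.
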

\begin{proof}
Let $\eta(z) \geq 0$ be a smooth test function with compact support in $(0, D/2)$. Write
\begin{equation*}
\int_{0}^{D/2} w_0(z) \eta(z) dz = W_1(0, D/2) - W_2(0, D/2)
\end{equation*}
with 
\begin{eqnarray*}
W_1(\xi_1, \xi_2) & = & \int_{\xi_1}^{\xi_2} \sqset{\psi_{k,0}(z) \eta''(z) - \psi_{k,0}^2(z) \eta'(z) + (n+1)\psi_{k,0}(z) \pset{\tan(z) \eta(z)}' } dz \\
W_2(\xi_1, \xi_2) & = & \int_{\xi_1}^{\xi_2} \sqset{2\tan(z) \psi_{k,0}^2(z) + (n-1)\pset{1-\tan^2(z)}\psi_{k,0}(z)+ 2\mu_0 \tan(z)} \eta(z) dz.
\end{eqnarray*}
Let $[z_0, z_1] \subset [0, D/2]$ be a maximal interval where $\psi_{k,0}$ is smooth. Then integration-by-part yields 
\begin{eqnarray*}
W_1(z_0, z_1) & = & \int_{z_0}^{z_1} \set{\psi_{k,0}(z) \eta''(z) - \psi^2_{k,0}(z) \eta'(z) + (n+1)\psi_{k,0}(z) \pset{\tan(z) \eta(z)}' }dz \\
& = & \pset{\psi_{k,0}(z) \eta'(z) - \psi'_{k,0}(z) \eta(z) - \psi^2_{k,0}(z) \eta(z) + (n+1)\psi_{k,0}(z) \tan(z) \eta(z)} \Big{|}_{z_0}^{z_1} \\
& & +\int_{z_0}^{z_1}\pset{\psi''_{k,0}(z) + 2\psi'_{k,0}(z) \psi_{k,0}(z) - (n+1)\tan(z) \psi'_{k,0}(z)} \eta(z) dz.
\end{eqnarray*}
Note that $\psi_{k,0}(z)$ is a smooth stationary solution of equation (\ref{eqn:psikPDE}) in the interval $[z_0, z_1]$. So we have
\begin{eqnarray*}
&  & W_1(z_0, z_1) - W_2(z_0, z_1)  \\
& = & \pset{\psi_{k,0}(z) \eta'(z) - \psi'_{k,0}(z) \eta(z) - \psi^2_{k,0}(z) \eta(z) + (n+1)\psi_{k,0}(z) \tan(z) \eta(z)} \Big{|}_{z_0}^{z_1} 
\end{eqnarray*}
and then 
\begin{eqnarray*}
\int_0^{D/2} w_0(z) \eta(z) dz & = & W_1(0, D/2) - W_2(0, D/2) \\
& = & \sum_{i=1}^r \eta(z_i) \pset{\psi_{k,0}'(z_i +) - \psi_{k,0}'(z_i - )} \\
& \leq & 0
\end{eqnarray*}
where $\set{z_i}_{i=1}^r \subset (0, D/2)$ are points where $\psi'_{k,0}$ is discontinuous. This finishes the proof. 
\end{proof}
\begin{rem}
The statement that $w_0\leq 0$ in the weak sense implies the fact, that $\psi_{k,0}$ is a supersolution of the equation (\ref{eqn:psikPDE}).
\end{rem}
Note that $c(z)$ in equation (\ref{eqn:Lw}) is bounded. The weak maximum principle for weak solution of linear parabolic equation, see, for example, Corollary 6.26 in \cite{Lieberman} or Theorem III.7.2  in \cite{Ladyparabolic}, implies that $w(z, t) \leq 0$ on $[0, D/2]\times [0, \infty)$. Since $\psi_{k,0} \ne \tilde\psi_{k,0}$, the strong maximum principle, see Theorem 6.25 in \cite{Lieberman}, yields the following

\begin{lem}\label{lem:psikt<0}
For any $(z, t)\in (0, D/2) \times (0, \infty)$, we have
\begin{equation*}
w(z, t) = \frac{\pd}{\pd t} \psi_k(z, t) < 0. 
\end{equation*}
\end{lem}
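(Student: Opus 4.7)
The plan is to conclude the strict inequality by applying the strong maximum principle to $w$ viewed as a classical solution of the linear parabolic equation $Lw=0$ in the open region $(0,D/2)\times(0,\infty)$, then deriving a contradiction from the assumption $\psi_{k,0}\ne\tilde\psi_{k,0}$ via the uniqueness of the stationary solution.

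First, I would observe that even though $w$ was introduced as a weak solution, in the interior $(0,D/2)\times(0,\infty)$ the function $\psi_k$ is $C^{\infty}$ by Corollary~\ref{cor:psikPDEcont}. Consequently the coefficients
\[
b(z) = 2\psi_k(z,t) - (n+1)\tan(z), \quad c(z) = 2\psi_k'(z,t) - 4\tan(z)\psi_k(z,t) - (n-1)\bigl(1-\tan^2(z)\bigr)
\]
are smooth and bounded on compact subsets, and $w = \pd_t\psi_k$ is a classical solution of $Lw=0$ there. The already-established non-strict inequality $w\le 0$ on $[0,D/2]\times[0,\infty)$ (from the weak maximum principle cited just before the lemma) then places $w$ in a setting where Theorem~6.25 of \cite{Lieberman} applies.

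Next, I would argue by contradiction. Suppose $w(z_0,t_0)=0$ at some interior point $(z_0,t_0)\in(0,D/2)\times(0,\infty)$. Then $w$ attains its (nonpositive) maximum value $0$ at an interior point, so the strong maximum principle for $L$ forces $w\equiv 0$ on the whole parabolic cylinder $(0,D/2)\times(0,t_0]$. Therefore $\psi_k(z,t)$ is independent of $t$ on this set, and by continuity at $t=0$ we have $\psi_k(z,t)=\psi_{k,0}(z)$ for all $(z,t)\in(0,D/2)\times[0,t_0]$. In particular, choosing any $t\in(0,t_0)$ where $\psi_k(\cdot,t)$ is $C^{\infty}$ shows that $\psi_{k,0}$ must itself be smooth on $(0,D/2)$, and it is a stationary solution of the PDE in (\ref{eqn:psikPDE}) satisfying $\psi_{k,0}(0)=0$, $\psi_{k,0}(D/2)=-k$. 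By the uniqueness statement in Proposition~\ref{prop:uniquenesstpsi} this forces $\psi_{k,0}=\tilde\psi_{k,0}$, contradicting the running assumption at the start of Section~4 that $\psi_{k,0}\ne\tilde\psi_{k,0}$.

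The main technical obstacle is making sure the strong maximum principle may be invoked despite the weak regularity of $w_0$ at $t=0$: one must confine the application to the interior parabolic cylinder $(0,D/2)\times(0,t_0]$, where $w$ is genuinely smooth and the coefficients $b,c$ are bounded, and then use continuity up to $t=0$ only at the final step to identify the constant-in-time limit with $\psi_{k,0}$. Everything else is bookkeeping: verifying that $c(z)$ and $b(z)$ are locally bounded (they are, since $\psi_k$ and $\psi_k'$ are Lipschitz in $z$ uniformly in $t$ by Corollary~\ref{cor:psikPDEcont}), and citing Proposition~\ref{prop:uniquenesstpsi} to close the contradiction.
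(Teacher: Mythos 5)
Your proposal is correct and matches the paper's main-text argument: the authors likewise derive $w\le 0$ from the weak maximum principle for weak solutions (citing Corollary~6.26 in Lieberman) and then invoke the strong maximum principle (Theorem~6.25 in Lieberman) together with the hypothesis $\psi_{k,0}\ne\tilde\psi_{k,0}$; you have simply unfolded the terse phrase ``Since $\psi_{k,0}\ne\tilde\psi_{k,0}$, the strong maximum principle yields the following'' into the contradiction via time-independence and Proposition~\ref{prop:uniquenesstpsi}. (The paper also supplies an independent heat-kernel proof in Appendix~B, but your route is the one in the body of the paper.) One small technical point you may wish to smooth over: to apply Proposition~\ref{prop:uniquenesstpsi} you need $\psi_{k,0}\in C^2([0,D/2])$, not merely $C^\infty$ on the open interval; this follows because, once $\psi_{k,0}$ is known to solve the second-order stationary ODE in the interior and is Lipschitz (hence bounded) on the closed interval, the ODE forces bounded $\psi_{k,0}''$ up to the endpoints.
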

\begin{rem}
See an alternative proof of Lemma \ref{lem:psikt<0} using the heat kernel in Appendix B by Qi Zhang. 
\end{rem}

\begin{prop}
For each $k > 0$, the solution $\psi_k(\cdot, t)$ converges to $\tilde\psi_{k,0}$ uniformly as $t\To \infty$.
\end{prop}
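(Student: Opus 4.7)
The plan is to combine the time-monotonicity of $\psi_k$ (Lemma~\ref{lem:psikt<0}) with the uniform-in-$t$ Lipschitz bound on $\psi_k(\cdot,t)$ (Corollary~\ref{cor:psikPDEcont}) to extract a continuous pointwise limit as $t\to\infty$, upgrade this to uniform convergence on $[0,D/2]$ via Dini, and finally identify the limit with $\tilde\psi_{k,0}$ by means of parabolic compactness together with the uniqueness statement in Proposition~\ref{prop:uniquenesstpsi}.

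First I would observe that Lemma~\ref{lem:psikt<0} gives $\partial_t\psi_k(z,t)<0$ on $(0,D/2)\times(0,\infty)$, while Lemma~\ref{lem:ubounds} supplies the lower barrier $\psi_k(z,t)\ge\tilde\psi_{k,0}(z)$. Therefore $t\mapsto\psi_k(z,t)$ is monotone decreasing and bounded below, so the pointwise limit $\psi_\infty(z):=\lim_{t\to\infty}\psi_k(z,t)$ exists for every $z\in[0,D/2]$ and satisfies $\tilde\psi_{k,0}\le\psi_\infty\le\psi_{k,0}$, with $\psi_\infty(0)=0$ and $\psi_\infty(D/2)=-k$ from the prescribed lateral boundary conditions. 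Corollary~\ref{cor:psikPDEcont} furnishes a Lipschitz constant for $\psi_k(\cdot,t)$ on $[0,D/2]$ independent of $t$, so the family $\{\psi_k(\cdot,t)\}_{t\ge0}$ is equicontinuous. This forces $\psi_\infty$ to be Lipschitz, hence continuous, and a pointwise monotone convergence of continuous functions to a continuous limit on a compact set is uniform by Dini's theorem, yielding uniform convergence of $\psi_k(\cdot,t)$ to $\psi_\infty$ on $[0,D/2]$.

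The remaining task is to identify $\psi_\infty$ with $\tilde\psi_{k,0}$. For an arbitrary sequence $t_j\to\infty$, I would introduce the time-translates $v_j(z,t):=\psi_k(z,t+t_j)$, each a classical solution of (\ref{eqn:psikPDE}) on $(0,D/2)\times(-t_j,\infty)$. On any compactly contained rectangle $K\subset\subset(0,D/2)\times\mathbb{R}$, the $L^\infty$-bound from Lemma~\ref{lem:ubounds} together with the interior estimates already invoked in the proof of Theorem~\ref{thm:uPDEcont}---the local gradient bound from Theorem~\ref{thm:existenceueps}(ii), then the H\"older and $C^{2+\alpha,1+\alpha/2}$ estimates of \cite{Lieberman}---deliver uniform $C^{2,1}$ bounds on $v_j$. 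A diagonal subsequence then converges in $C^{2,1}_{\mathrm{loc}}\pset{(0,D/2)\times\mathbb{R}}$ to a limit $v_\infty$ that again solves (\ref{eqn:psikPDE}). Since $v_j(z,t)\to\psi_\infty(z)$ pointwise for each fixed $(z,t)$, the limit $v_\infty(z,t)\equiv\psi_\infty(z)$ is $t$-independent, hence a stationary $C^2$ solution of (\ref{eqn:ODEpsimu0}) on $(0,D/2)$. Together with the boundary values $\psi_\infty(0)=0$ and $\psi_\infty(D/2)=-k$, Proposition~\ref{prop:uniquenesstpsi} forces $\psi_\infty=\tilde\psi_{k,0}$, completing the proof.

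The main obstacle I expect is the interior compactness step above: passing from a mere pointwise limit of the shifted solutions $v_j$ to a genuine smooth stationary solution of the nonlinear PDE requires $C^{2,1}_{\mathrm{loc}}$ control that is uniform in the shift $t_j$. What makes this work is that the coefficients of (\ref{eqn:psikPDE}) depend only on $z$, that the $L^\infty$-bound from Lemma~\ref{lem:ubounds} is uniform in time, and that the spatial Lipschitz bound from Corollary~\ref{cor:psikPDEcont} provides baseline gradient control; the quasilinear interior theory of \cite{Lieberman} then produces the required $C^{2+\alpha,1+\alpha/2}$ estimates uniformly in $t_j$, after which monotonicity, Dini, and Proposition~\ref{prop:uniquenesstpsi} finish the argument.
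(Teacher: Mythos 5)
Your proof is correct, but it identifies the limit by a genuinely different mechanism than the paper does. Both arguments share the first half: the monotonicity of Lemma~\ref{lem:psikt<0} plus the lower bound $\psi_k\ge\tilde\psi_{k,0}$ from Lemma~\ref{lem:ubounds} give a pointwise limit $\psi_\infty$, and the $t$-uniform Lipschitz bound from Corollary~\ref{cor:psikPDEcont} upgrades it to a uniform limit (you use Dini, the paper uses Arzel\`a--Ascoli plus monotonicity; the two are interchangeable here). The divergence is in showing $\psi_\infty=\tilde\psi_{k,0}$. The paper views $\psi_k$ as the solution of a \emph{linear} non-homogeneous parabolic equation (\ref{eqn:wlinearnonhom}) whose coefficients $b(\cdot,t)$ and $f(\cdot,t)$, built from $\psi_k$ itself, converge uniformly as $t\to\infty$, and then cites a theorem of Friedman on stabilization of parabolic solutions to the corresponding elliptic boundary value problem $L u = 0$. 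You instead run the standard geometric-analysis translation argument: shift in time, extract a $C^{2,1}_{\mathrm{loc}}$-convergent subsequence from interior Schauder bounds (which are time-translation invariant because the equation is autonomous), observe the limit is $t$-independent because it agrees with the monotone pointwise limit $\psi_\infty$, and hence is a stationary solution. Your route is more self-contained and avoids the external stabilization theorem; the paper's route, being linear, sidesteps the need for the nonlinear interior Schauder compactness machinery.

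One thing worth being a bit more explicit about in your write-up is the passage from interior regularity to $C^2$ up to the boundary, which Proposition~\ref{prop:uniquenesstpsi} nominally asks for. Your compactness step only gives $\psi_\infty\in C^2((0,D/2))$. This is easily repaired: since $\psi_\infty$ is a classical stationary solution on the interior, the computation of Remark~\ref{rem:stationarysolutionlambda0} shows it satisfies the first-order Riccati equation (\ref{eqn:ODEpsimu0}) for some constant $c$ on $(0,D/2)$; the $t$-uniform Lipschitz bound gives $|\psi_\infty'|\le L$ on the interior, and the Riccati equation then expresses $\psi_\infty'$ and $\psi_\infty''$ continuously in terms of $\psi_\infty$ on all of $[0,D/2]$, so $\psi_\infty\in C^2([0,D/2])$. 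Alternatively, and perhaps more directly, the uniqueness argument that actually appears in the proof of Proposition~\ref{prop:uniquenesstpsi} only uses that $\psi_\infty$ solves (\ref{eqn:ODEpsimu0}) on $(0,D/2)$ and is continuous at the endpoints with $\psi_\infty(0)=0$, $\psi_\infty(D/2)=-k$, so interior regularity together with your boundary-value observation already suffices and the $C^2([0,D/2])$ hypothesis in the proposition statement is not really needed.
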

\begin{proof}
For fixed $t\geq 0$, from Corollary \ref{cor:psikPDEcont}, the solution $\psi_k(z, t)$ is Lipschitz in $z$-variable on $[0, D/2]$ and the Lipschitz constant is independent of $t$. It follows that $\set{\psi_k(\cdot, t)}_{t\geq 0}$ is equicontinuous. Lemma \ref{lem:ubounds} shows that it is also uniformly bounded. Arzel\`{a}-Ascoli Theorem then implies that there is a sequence $\set{t_n}$, such that $\psi_k(\cdot, t_n)$ uniformly converges to a continuous function, say $v$, on $[0, D/2]$ as $t_n \To \infty$. Since $\pd_t \psi_k(z,t) \leq 0$ for $t > 0$, $\psi_k(z, t)$ converges to $v(z)$ uniformly on $[0, D/2]$ as $t\To \infty$. In the following, we show that $v = \tilde\psi_{k,0}$. 

Since $\psi_k(z, t) \geq \tilde\psi_{k,0}(z)$, we have $v(z) \geq \tilde\psi_{k,0}(z)$ and is continuous on $[0, D/2]$. Consider the following non-homogenous linear parabolic equation 
\begin{equation}\label{eqn:wlinearnonhom}
- \pd_t Y + Y_{zz} + b(z, t) Y_z =f(z, t)
\end{equation}
on $(0, D/2)\times (0, \infty)$ with boundary condition $Y(0, t) =0$ and  $Y(D/2,t) = -k$, where
\begin{eqnarray*}
b(z, t) & = & 2\psi_{k}(z,t) - (n+1)\tan(z) \\
f(z, t) & = & 2\tan(z) \psi^2_{k}(z,t) + (n-1)\pset{1-\tan^2(z)} \psi_k(z,t) + 2\mu_0 \tan(z).
\end{eqnarray*}
 It follows that, as $t\To \infty$, 
\begin{eqnarray*}
b(z,t) & \To & 2 v(z) - (n+1)\tan(z) \\
f(z, t) & \To & 2\tan(z) v^2(z) + (n-1) \pset{1-\tan^2(z)}v(z) + 2\mu_0 \tan(z)
\end{eqnarray*}
uniformly on $[0, D/2]$, and the limits are continuous functions on $[0, D/2]$. Note that $f(z,t)$ is continuous on $[0, D/2] \times [0, \infty)$ and then Theorem 2 in \cite[p. 158]{Friedman} implies that, the solution of the parabolic equation (\ref{eqn:wlinearnonhom}) with the boundary conditions converges to the unique solution of the following ordinary differential equation
\begin{eqnarray}\label{eqn:yBVP}
y''(z) + \pset{2v(z) - (n+1)\tan(z)} y'(z) & = & 2\tan(z) v^2(z) \nonumber \\
& & + (n-1)\pset{1-\tan^2(z)}v(z) + 2\mu_0 \tan(z),
\end{eqnarray}
with the boundary condition $y(0) = 0$ and $y(D/2) = -k$. Since $\psi_k (z,t)$ solves the equation (\ref{eqn:wlinearnonhom}) and has the uniform limit $v(z)$, $v(z)$ solves the boundary value problem:
\[
\left\{
\begin{array}{rcl}
v''(z) + 2 v(z) v'(z) -(n+1)\tan(z) v'(z) - 2\tan(z) v^2(z) & & \medskip \\
 - (n-1)\pset{1-\tan^2(z)} v(z) - 2\mu_0 \tan(z) & = & 0, \medskip\\
 \text{ with } v(0) = 0 \text{ and } v(D/2) = -k. \qquad \qquad \qquad  & & 
\end{array}
\right.
\]
It follows that $v(z)$ is a stationary solution to the parabolic equation (\ref{eqn:psipdemu0}). So we conclude that $v= \tilde\psi_{k,0}$ from the uniqueness in Proposition \ref{prop:uniquenesstpsi}.  This finishes the proof.
\end{proof}

Finally we give the proof of Theorem \ref{thm:logconcavity} in Introduction.
\begin{proof}[Proof of Theorem \ref{thm:logconcavity}]
For each integer $k > 0$, from Proposition \ref{prop:psik0initialdata}, the initial data $\psi_{k,0}(z)$ is Lipschitz continuous, and a modulus of log-concavity of $\phi_0$ on $[0, D/2]$. $\psi_{k,0}$ also satisfies the boundary condition $\psi_{k,0}(0) = 0$ and $\psi_{k,0}(D/2) = -k$. From Corollary \ref{cor:psikPDEcont}, the initial-boundary value problem (\ref{eqn:psikPDE}) has the solution $\psi_k(z, t)$  in $C^\infty\pset{(0, D/2) \times (0, \infty)}\cap C\pset{[0, D/2]\times [0, \infty)}$, and it is Lipschitz continuous in $z$-variable on $[0, D/2]$. Since $\mu_0 \leq \lambda_0$, see Lemma 3.12 in \cite{SetoWangWei}, $\psi_{k}$ satisfies the parabolic differential inequality (\ref{eqn:psipdelambda0}). Theorem 3.2 in \cite{SetoWangWei}(or see Theorem \ref{thm:SWWpsiPDE}) implies that $\psi_k(\cdot, t)$ is a modulus of log-concavity of $\phi_0$ for all $t\geq 0$, so is the uniform limit $\tilde\psi_{k,0}$. Then we conclude that $\tilde\psi_0 = \pset{\log \tilde \phi_0}'$ is a modulus of log-concavity of $\phi_0$ as $\tilde\psi_0$ is the uniform limit of $\tilde\psi_{k,0}$ when $k \To \infty$. This finishes the proof.
\end{proof}

\medskip

\appendix

\section{Existence theorem for quasi-linear parabolic equations}

In this appendix, we give the detailed argument of step (ii) in solving the initial-boundary value problem (\ref{eqn:uPDE}). The main result is Theorem \ref{thm:uPDEHolder}. 

Fix $D < \pi$. In this appendix, we use $x, y, \ldots$ for the variable in $[0, D/2]$. Recall the standard notions used in the parabolic equations. For the interval $I = (0, D/2)\subset \Real^1$ and the cylindrical domain $\Omega = I \times (0, T)\subset \Real^2$, we define the bottom, corner and side of $\Omega$ as 
\[
B \Omega = I \times \set{0}, \quad C\Omega = \pd I \times \set{0}, \quad S\Omega = \pd I \times (0, T).
\]
The parabolic boundary is $\POmega =B\Omega \cup C \Omega \cup S\Omega$. Denote by $\Omega_t = I \times (0, t)$ the cylinder for any $t\in(0, T)$. For any two points $X = (x, t)$ and $Y = (y,s)$ in $\bar \Omega$, the parabolic distance is 
\[
\abs{X- Y} = \max \set{\abs{x-y}, \abs{s -t}^{1/2}}.
\]
The relevant H\"older norms for $0<\alpha \leq 1$:
\[
[u]_\alpha = \sup_{X\ne Y \in \Omega} \frac{\abs{u(X) - u(Y)}}{\abs{X -Y}^\alpha}
\] 
\[
\abs{u}_{1+\alpha} = \sup_\Omega\abs{u} + \sup_\Omega \abs{u_x} + [u_x]_\alpha
\]
and
\[
\abs{u}_{2+\alpha} = \sup_\Omega \abs{u} + \sup_\Omega\abs{u_x} + \sup_\Omega\abs{u_{xx}} + \sup_\Omega\abs{u_t} + [u_{xx}]_\alpha + [u_t]_\alpha.
\]
A function $u \in C^{2,1}\pset{\Omega}$ means that $u$, $u_x$, $u_{xx}$ and $u_t$ exist and are continuous on $\Omega$.

\smallskip

We drop the subscription $_{k, 0}$ in $\tilde\psi_{k,0}$ and write $\tilde\psi (x) = \tilde\psi_{k,0}(x)$. We also adopt the convention that $'$ and $''$ stand for the spatial derivatives. Recall the initial-boundary value problem in (\ref{eqn:uPDE}) for $u(x,t) = \psi_k(x,t) - \tilde\psi(x)$:
\begin{equation}\label{eqn:uPDEapp}
\left\{
\begin{array}{rcl}
\medskip
\dfrac{\pd}{\pd t}u & = & u_{xx} + 2 uu_x + \sqset{2\tilde\psi(x)-(n+1)\tan(x)} u_x - 2\tan(x) u^2 \\
\medskip
& & + \sqset{2\tilde\psi'(x) - 4 \tan(x) \tilde\psi(x) - (n-1)\pset{1-\tan^2(x)} } u \smallskip \\
& & \text{ on }[0, D/2] \times (0, \infty); \medskip \\
\medskip
\text{with}& & u(\cdot, 0) = u_0(\cdot),  \\
& & u(0, t) =  u(D/2, t) = 0,
\end{array}
\right.
\end{equation}
For the initial data $u_0$ in the problem (\ref{eqn:uPDEapp}) we assume that 
\begin{eqnarray}\label{eqn:u0assumption}
& 0 \leq u_0(x) \leq \psi_{k,0}(x) - \tilde \psi(x) \text{ for all }x \in [0, D/2], \text{ and} & \\
& u_0(x) = \psi_{k,0}(x) - \tilde\psi(x) \text{ for } x\in [0, \kappa] \cup [D/2 - \kappa, D/2].  & \nonumber 
\end{eqnarray}
Here $\kappa > 0$ is a constant determined by $\psi_{k,0} - \tilde \psi$. The initial data $u_0^\eps$ with assumption in (\ref{eqn:u0eps}) certainly satisfies the properties in (\ref{eqn:u0assumption}). 

\begin{thm}\label{thm:uPDEHolder}
Consider the initial-boundary problem in (\ref{eqn:uPDEapp}), and a constant $\alpha \in (0, 1)$. Suppose that $u_0$ is defined on the parabolic boundary $\POmega$ and $u_0 \in C^{2+\alpha}(\POmega)$ with the properties in (\ref{eqn:u0assumption}). Then there is a solution $u \in C^{2,1}(\Omega) \cap C(\bar \Omega)$
\end{thm}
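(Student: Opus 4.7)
The strategy is to construct $u$ as a Leray--Schauder fixed point (Theorem~8.3 of \cite{Lieberman}) applied to the one-parameter homotopy
$$
u_t = u_{xx} + \sigma\bigl[2uu_x + a_1(x)u_x - 2\tan(x)u^2 + a_2(x)u\bigr], \qquad \sigma \in [0,1],
$$
with the same initial-boundary data $u_0$ on $\POmega$. At $\sigma=0$ this is the heat equation, for which existence with $C^{2+\alpha}(\POmega)$ data is classical, so the problem reduces to establishing uniform a priori $C^{1+\beta}(\bar\Omega)$ bounds along the homotopy. The obstruction to a direct appeal to Chapter~XII of \cite{Lieberman} is the quadratic-in-$u$ term $-2\tan(x)u^2$, which violates the standard growth hypotheses in $u$; the point is that once a uniform $L^\infty$ bound on $u$ is in hand, this term becomes a bounded zeroth-order coefficient and the equation slots back into the standard quasilinear framework.

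First I would establish the maximum estimate. The functions $v \equiv 0$ and $v \equiv u_0 = \psi_{k,0} - \tilde\psi$ are a sub- and a supersolution of the $\sigma$-equation for every $\sigma \in [0,1]$: the former identically, and the latter because $\psi_{k,0}$ is a supersolution of the full problem (\ref{eqn:psikPDE}) by Lemma~\ref{lem:w0<=0}, with the same computation valid at every $\sigma$ since the problematic terms all carry the factor $\sigma$. Lemma~\ref{lem:comparison} then gives $0 \leq u \leq C_1 := \sup u_0$ uniformly in $\sigma$. With $u$ bounded, the coefficient $2u$ of $u_x$ is uniformly bounded, the term $2\tan(x)u^2$ becomes a bounded zeroth-order coefficient, and the remaining equation has only linear-in-$p$ nonlinearity.

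Next I would obtain a uniform $C^{1+\beta}(\bar\Omega)$ bound. For the boundary gradient estimate, barriers are built from $u_0$ itself on the strips $[0,\kappa]\cup[D/2-\kappa,D/2]$: by hypothesis (\ref{eqn:u0assumption}), $u_0$ coincides with the smooth stationary solution $\psi_{k,0}-\tilde\psi$ on those strips, which both supplies compatibility at the corners $C\Omega$ and provides ready-made upper barriers at $x=0$ and $x=D/2$. The interior gradient bound $|u_x|\leq L$ then comes from a Bernstein-type argument applied to $w = u_x^2 + A u^2$, or equivalently Theorem~11.17 of \cite{Lieberman}. An interior Hölder estimate $[u_x]_\beta \leq C$ for some $\beta \in (0,\alpha]$ follows from Theorem~12.2 of \cite{Lieberman} applied to the linear equation satisfied by $u_x$, and linear Schauder theory (Theorem~4.9 of \cite{Lieberman}) upgrades this to a uniform $|u|_{2+\beta}$ bound on interior sets. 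Combined with the corner compatibility, this yields the uniform $C^{1+\beta}(\bar\Omega)$ compactness needed to close Leray--Schauder, producing a fixed point at $\sigma=1$: the desired classical solution $u\in C^{2,1}(\Omega)$, continuous up to $\bar\Omega$ by the same Hölder estimate.

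The hard part I expect will be the uniform gradient and Hölder estimates up to the side $S\Omega$. Standard barrier constructions for quasilinear parabolic equations demand regularity and a compatibility condition for the initial-boundary data at the corners $C\Omega$; here compatibility is \emph{not} automatic from the bare assumption $u_0 \in C^{2+\alpha}(\POmega)$, but is encoded precisely in the structural hypothesis (\ref{eqn:u0assumption}) that $u_0$ agrees with a smooth stationary solution of the PDE in a collar of each endpoint. Exploiting this special structure explicitly, rather than invoking a black-box boundary regularity theorem, is the technical crux of the appendix.
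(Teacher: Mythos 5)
Your overall framework is essentially the one in the paper: compatibility at the corners from the agreement of $u_0$ with the smooth stationary solution $\psi_{k,0}-\tilde\psi$ on collars, an $L^\infty$ bound from a comparison argument, a boundary gradient estimate via barriers exploiting that same collar structure, gradient and H\"older gradient estimates, and then Leray--Schauder (Theorems 8.2/8.3 of \cite{Lieberman}) plus linear theory. Two points deserve correction.

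First, you invoke the \emph{interior} estimates (Theorem 11.17 for $|u_x|$, Theorem 12.2 for $[u_x]_\gamma$) and then assert that interior bounds plus corner compatibility yield $C^{1+\beta}(\bar\Omega)$ compactness. That does not follow: interior estimates say nothing about $\bar\Omega$ near $S\Omega$ away from the corners. What the appendix actually uses, once the boundary gradient estimate is in hand, are the \emph{global} versions --- Theorem 11.16 for $\sup_\Omega |u_x|$ and Theorem 12.10 for $[u_x]_\gamma$ up to $\bar\Omega$. These give the uniform $|u|_{1+\gamma}$ bound on $\bar\Omega$ that closes Leray--Schauder. (The local Theorems 11.17 and 12.2 are used in the paper, but in Section~4 for the approximation argument on interior compacta, not here.)

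Second, your claim that $v\equiv u_0=\psi_{k,0}-\tilde\psi$ is a supersolution of the $\sigma$-equation ``for every $\sigma$, since the problematic terms all carry the factor $\sigma$'' is not correct as stated. From $u_0''+a(x,u_0,u_0')\le 0$ one gets $u_0''+\sigma a(x,u_0,u_0')\le (\sigma-1)a(x,u_0,u_0')$, and this is $\le 0$ only where $a(x,u_0,u_0')\ge 0$; at $\sigma=0$ you would need $u_0$ concave, which is not granted. This particular error is recoverable --- once $u\ge 0$ is known from the subsolution $0$, an $L^\infty$ bound uniform in $\sigma$ follows from the one-sided structure of $a$ (the $-2\tan(x)q^2$ term has the favorable sign for $q\ge 0$) or directly from Lieberman's Chapter~9 maximum estimates, so the estimate survives; but the specific supersolution argument you wrote does not go through uniformly in $\sigma$.
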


A solution $u$ is said to be \emph{classical} if $u \in C^{2,1}(\Omega)\cap C(\bar \Omega)$. The proof follows a standard argument for showing existence: a bound on $\sup \abs{u}$, a bound on $\sup \abs{u_x}$, a H\"older gradient bound $\abs{u_x}_\gamma$ for some $\gamma \in (0, 1)$, and then the application of a fixed point theorem. 

Recall the parabolic operator in (\ref{eqn:Pu}) that is associated with the parabolic equation in (\ref{eqn:uPDEapp}):
\begin{equation}\label{eqn:Pux}
P = - u_t + a^{11}u_{xx} + a(X, u, u_x)
\end{equation}
with $a^{11} = 1$ and
\begin{equation}\label{eqn:aXqp}
a(X, q, p) = 2 q p + a_1(x) p -2\tan(x) q^2  + a_2(x) q
\end{equation}
with
\begin{eqnarray}
a_1(x) & = & 2\tilde\psi(x) - (n+1)\tan(x) \label{eqn:a1x} \\
a_2(x) & = & 2\tilde\psi'(x) - 4\tan(x) \tilde\psi(x) - (n-1)\pset{1-\tan^2(x)}, \label{eqn:a2x}
\end{eqnarray}
for $(X, q, p) \in (0, D/2)\times (0, T) \times \Real\times \Real^1$. 

Note that the trivial function is a sub-solution, and $\psi_{k,0} - \tilde\psi$ is a super-solution of the equation in (\ref{eqn:uPDEapp}). A similar argument as in Lemma \ref{lem:ubounds} yields 
\begin{lem}[Maximum estimate]\label{lem:Maxestimate}
If $u$ is a classical solution of (\ref{eqn:uPDEapp}) in $\Omega$, then 
\[
\sup_{\Omega} \abs{u} \leq C_1,
\]
where 
\[
C_1 =\sup_{[0, D/2]} \abs{\psi_{k,0}-\tilde\psi}
\] 
is independent of $u_0\in C^{2+\alpha}(\POmega)$.
\end{lem}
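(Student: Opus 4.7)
The plan is to adapt the argument of Lemma \ref{lem:ubounds} almost verbatim, applying the comparison principle (Lemma \ref{lem:comparison}) to a sub-solution for the lower bound and to a family of smooth super-solutions for the upper bound.

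For the lower bound, I would observe that the identically zero function satisfies $P(0)=0$ in $\Omega$, and on the parabolic boundary $\POmega$ we have $u_0\geq 0$ by the first part of \eqref{eqn:u0assumption} together with the vanishing side data. Since the spatial derivative of the zero function is trivially bounded, Lemma \ref{lem:comparison} applies and yields $u\geq 0$ on $\bar\Omega$.

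For the upper bound, the function $\psi_{k,0}-\tilde\psi$ is only piecewise smooth, so it cannot be used directly as a super-solution in Lemma \ref{lem:comparison}. Instead, I would invoke the decomposition from Section 3: by construction $\psi_{k,0}(x)=\min\{\psi^+_{j,s(j)}:1\leq j\leq k\}$ and each $\psi^+_{j,s(j)}$ is in turn a minimum of stationary solutions $\psi^L_c$ and $\psi^R_{j,c}$ of the ODE \eqref{eqn:ODEpsimu0}. Consequently $\psi_{k,0}-\tilde\psi$ is the minimum of a finite family of smooth functions $\{f_\alpha\}_{\alpha=1}^{N_1}$ on $[0,D/2]$ (coming from the $\psi^L$'s) together with functions $\{g_\beta\}_{\beta=1}^{N_2}$ defined on half-open intervals $(z_\beta,D/2]$ (coming from the $\psi^R$'s) which blow up as $x\to z_\beta^+$; each $f_\alpha$ and $g_\beta$ is a stationary solution of the parabolic equation in \eqref{eqn:uPDEapp}, so $P(f_\alpha)=P(g_\beta)=0$.

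For each $f_\alpha$, the bound $f_\alpha'$ is finite on $[0,D/2]$, and on the parabolic boundary $u_0\leq \psi_{k,0}-\tilde\psi\leq f_\alpha$, so Lemma \ref{lem:comparison} gives $u\leq f_\alpha$ on $\bar\Omega$. For each $g_\beta$, the continuity of $u$ and the blow-up of $g_\beta$ at $z_\beta$ produce a point $\tilde z>z_\beta$ on which $u(\tilde z,t)<g_\beta(\tilde z)$ for all $t\in[0,T]$; applying Lemma \ref{lem:comparison} on $[\tilde z,D/2]\times[0,T]$ (where $g_\beta'$ is bounded) and letting $\tilde z\downarrow z_\beta$ gives $u\leq g_\beta$ on $(z_\beta,D/2]\times[0,T]$. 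Taking the minimum over $\alpha,\beta$ yields $u(x,t)\leq \psi_{k,0}(x)-\tilde\psi(x)$ on $\bar\Omega$, and combined with the lower bound we obtain $\sup_\Omega|u|\leq C_1$. The only mild subtlety is the handling of the $g_\beta$ pieces near their singular endpoints, and this is precisely the device used in Lemma \ref{lem:ubounds}, so no new ideas are required.
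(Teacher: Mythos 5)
Your proposal is correct and follows essentially the same route as the paper, which simply remarks that the lemma follows from the same argument as Lemma \ref{lem:ubounds}: the zero function is a sub-solution, and $\psi_{k,0}-\tilde\psi$ is handled as a super-solution via its decomposition into stationary pieces $f_\alpha$ and $g_\beta$ (the latter truncated near their singular endpoints) before invoking the comparison principle. You have correctly unpacked the reasoning the paper leaves implicit.
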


In the following, the positive constants $C_i$'s($i\geq 2$) may depend on $D$, $\sup\abs{\tan(x)}$, $\sup\abs{\tilde\psi(x)}$ and $\sup\abs{\tilde\psi'(x)}$. We only write out the explicit dependence of such $C_i$'s on $C_1$, $T$, $\sup \abs{u_0'}$, and $\sup \abs{u''_0}$. 

\begin{lem}[Boundary gradient estimate]\label{lem:boundarygradestimate}
If $u$ is a classical solution of (\ref{eqn:uPDEapp}) in $\Omega$, then
\begin{equation}
\sup_{\begin{array}{c}(x, t)\in S\Omega \\ (y,s)\in \Omega \end{array}} \frac{\abs{u(x,t) - u(y,s)}}{\abs{(x,t)- (y,s)}} \leq C_2,
\end{equation}
where $C_2$ is a constant depending on $C_1$, $\sup_{[0, D/2]}\abs{u_0'}$ and $\sup_{[0, D/2]}\abs{u_0''}$. 
\end{lem}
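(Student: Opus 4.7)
The plan is to construct time-independent spatial super- and subsolutions vanishing on the appropriate side of $[0,D/2]$, then apply the comparison principle Lemma \ref{lem:comparison}. Since $u\equiv 0$ on $S\Omega$ and $u\ge 0$ on $\bar\Omega$ (the latter by comparing with the trivial subsolution $v\equiv 0$ in Lemma \ref{lem:comparison}, using $P(0)=0$ and $u_0\ge 0$ from (\ref{eqn:u0assumption})), the statement reduces to showing a pointwise upper bound of the form $u(y,s)\le C_2\,\abs{y-x_0}$ for $x_0\in\set{0,D/2}$ and $(y,s)\in\Omega$. Such a purely spatial bound automatically yields the desired parabolic Lipschitz estimate, because $\abs{y-x_0}\le\max\pset{\abs{y-x_0},\abs{s-t_0}^{1/2}}=\abs{(y,s)-(x_0,t_0)}$.

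To construct the required supersolution near $x_0=0$, I try the concave ansatz $w^+(y)=A\pset{1-e^{-\beta y}}$ with parameters $A,\beta>0$ to be chosen. Substituting into (\ref{eqn:Pux}) gives
\[
Pw^+=A e^{-\beta y}\pset{-\beta^2+\beta a_1(y)+2A\beta(1-e^{-\beta y})}-2\tan(y)A^2(1-e^{-\beta y})^2+A a_2(y)(1-e^{-\beta y}).
\]
Near $y=0$ the dominant term is the negative $-A\beta^2 e^{-\beta y}$, and for $y$ bounded away from $0$ the favorable term $-2\tan(y)A^2(1-e^{-\beta y})^2$ (negative since $\tan(y)>0$ on $(0,D/2]$ and $D<\pi$) dominates the $O(A)$ contributions. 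Choosing $\beta$ large in terms of $\sup\abs{a_1}$ and $\sup\abs{a_2}$, and then $A$ large, yields $Pw^+\le 0$ throughout $(0,D/2)$.

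To secure $w^+\ge u_0$ on $[0,D/2]$, I use $u_0(0)=0$ (consequence of (\ref{eqn:u0assumption}) and $\psi_{k,0}(0)=\tilde\psi(0)=0$) together with Taylor's expansion $u_0(y)\le\sup\abs{u_0'}\,y+\tfrac{1}{2}\sup\abs{u_0''}\,y^2$ near the endpoint, compared to $w^+(y)=A\beta y-\tfrac{1}{2}A\beta^2 y^2+O(y^3)$. Demanding $A\beta\ge\sup\abs{u_0'}$ suffices to first order, and controlling the quadratic deviation uses $\sup\abs{u_0''}$ --- which is precisely where this quantity enters $C_2$. Taking $A$ larger still so that $w^+(D/2)\ge C_1\ge\sup u_0$ extends $w^+\ge u_0$ to all of $[0,D/2]$; evidently $w^+\ge 0=u$ on the side $\set{0,D/2}\times(0,T)$. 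Since $w^{+\prime}$ is bounded, Lemma \ref{lem:comparison} gives $u\le w^+$ on $\bar\Omega$, so $u(y,s)\le A\beta\,y$ for all $(y,s)\in\bar\Omega$. A symmetric construction at $x_0=D/2$ with $\tilde w^+(y)=A\pset{1-e^{-\beta(D/2-y)}}$ yields $u(y,s)\le A\beta(D/2-y)$ on the other side. Consequently, for any $(x_0,t_0)\in S\Omega$ and $(y,s)\in\Omega$,
\[
\frac{\abs{u(y,s)-u(x_0,t_0)}}{\abs{(y,s)-(x_0,t_0)}}=\frac{u(y,s)}{\max\pset{\abs{y-x_0},\abs{s-t_0}^{1/2}}}\le\frac{A\beta\abs{y-x_0}}{\abs{y-x_0}}=A\beta=:C_2.
\]

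The main obstacle is the simultaneous achievement of $Pw^+\le 0$ on $(0,D/2)$ and $w^+\ge u_0$ on $[0,D/2]$, with the slope $w^{+\prime}(0)$ controlled only by $\sup\abs{u_0'}$, $\sup\abs{u_0''}$, and $C_1$. The nonlinear cross-term $2uu_x$ together with the coefficient $\tan(y)$ (bounded but not small) forces a careful balance of $A$ versus $\beta$; the concavity of $w^+$ constrains how tightly it can dominate a possibly convex $u_0$ near each endpoint, which is why the second-derivative bound is essential. A standard alternative, should the direct ansatz prove fragile, is to patch the spatial barrier with a time-dependent corner barrier $W(y,s)=u_0(y)+Ks$, where $K$ is chosen via $\sup\abs{u_0''}$ and the coefficient bounds; either route produces the same dependence of $C_2$ on the listed quantities.
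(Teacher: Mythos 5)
Your overall framework is sound---reducing via $u\ge 0$ and $u|_{S\Omega}=0$ to a pointwise bound $u(y,s)\le C_2\,\abs{y-x_0}$ and producing that bound by a barrier plus Lemma \ref{lem:comparison}---but the crucial step, arranging $Pw^+\le 0$ on \emph{all} of $(0,D/2)$, does not go through with the stated parameter order. In your expansion the nonlinear cross term $2w^+w^{+\prime}$ contributes $+2A^2\beta e^{-\beta y}(1-e^{-\beta y})$, which near $y=\ln 2/\beta$ is of order $A^2\beta$; this is not among the ``$O(A)$ contributions'' you claim are dominated. At that same location $\tan(y)\approx \ln 2/\beta$ is small, so the favorable term $-2\tan(y)A^2(1-e^{-\beta y})^2$ is only of order $A^2/\beta$, while the Laplacian term is of order $A\beta^2$. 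The balance $-A\beta^2+A^2\beta\le 0$ forces $A\lesssim\beta$; in particular you cannot ``then take $A$ large.'' This coupling is exactly the difficulty the nonlinear term introduces, and your write-up never resolves it. (As a secondary point, the $\sup\abs{u_0''}$ dependence does not actually enter through the domination $w^+\ge u_0$ as you suggest; a concave $w^+$ with $A\beta>\sup\abs{u_0'}$ and $A\gtrsim C_1$ already dominates $u_0$ independently of $u_0''$, so your proposed mechanism for how $\sup\abs{u_0''}$ enters $C_2$ is spurious.)

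The paper avoids the difficulty in two ways. First, it uses the barrier $w=u_0+f$ rather than a purely spatial profile; this trivially satisfies $w\ge u_0$ on the bottom, and $\sup\abs{u_0''}$ enters naturally through $w_{xx}=u_0''+f''$. Second, and more importantly, it works with the auxiliary \emph{frozen-coefficient} operator $\bar P w=-w_t+w_{xx}+a(X,u(X),w_x)$, in which the $2qp$ term becomes $2u(X)\,w_x$ (linear in $w_x$ with bounded coefficient $\abs{u}\le C_1$) and the $-2\tan(x)q^2$ term becomes $-2\tan(x)u(X)^2$, which no longer involves $w$ at all. Because $\bar P u = P u = 0$, its weak maximum principle compares $w$ directly with $u$. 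On top of that, the barrier lives only on a small neighborhood $[0,\sigma]\times(0,T)$ of the boundary, with $f(\sigma)=M=\sup_\Omega\abs{u-u_0}$ so that $w\ge u$ on the artificial side $\set{\sigma}\times(0,T)$; consequently, $\bar P w\le 0$ need only hold on a small region, where the structural bound $a(X,u(X),p)\le \beta p^2$ for $\abs{p}\ge 2$ reduces everything to the scalar ODE $f''+\beta_0(f')^2=0$. If you want to pursue your global-barrier route, you would need to carry out the $A\sim\beta$ balance explicitly and verify $Pw^+\le 0$ across the transition region $y\sim 1/\beta$ where $\tan(y)$ is small; the paper's localization and coefficient-freezing sidestep precisely that computation.
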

\begin{proof}
Define the auxiliary operator $\bar P$ as
\[
\bar P w = - w_t + w_{xx} + a(X, u(X), w_x)
\]
for any $w \in C^{2,1}(\Omega)$, where $a(X, q, p)$ is given in (\ref{eqn:aXqp}). Fix a point $X_0= (x_0, t_0) \in S\Omega$. First we assume that $x_0 = 0$. Choose a positive number $\sigma \leq D/2$ and the value of $\sigma$ is to be determined, and let $f = f(x)$ be an increasing $C^2$-function with $f(0) = 0$ to be determined. Assume that for any $x\in [0, \sigma]$, we have
\begin{equation}\label{eqn:fpassumption1}
f'(x) \geq \max\set{2\sup_{[0, D/2]}\abs{u_0'(x)}, 4}
\end{equation}
Let $w(x) = f(x) + u_0(x)$. Note that assumption (\ref{eqn:fpassumption1}) implies 
\begin{eqnarray}
& & \abs{w'(x)} = \abs{f'(x) + u_0'(x)} \leq 2f'(x) \label{eqn:wpupperbound} \\
& & \abs{w'(x)} = \abs{f'(x) + u_0'(x)} \geq \frac 1 2 f'(x) \geq 2. \label{eqn:wplowerbound}
\end{eqnarray}
For the term $a(X, u(X), p)$ we have the following estimate
\begin{eqnarray*}
a(X, u(X), p) & = & 2 u(X) p + a_1(x) p - 2\tan(x) u^2(X) + a_2(x) u(X) \\
& \leq & \alpha_1(C_1) \abs{p} + \alpha_2(C_1)
\end{eqnarray*}
where $\alpha_i$($i=1,2$) depends on $C_1$ in Lemma \ref{lem:Maxestimate}, and is independent of $u_0$. Let 
\[
\beta = \max\set{\alpha_1, \alpha_2}
\]
and then we have 
\begin{equation}\label{eqn:aupperbound}
a(X, u(X), p) \leq \beta p^2, \quad \text{if } \abs{p} \geq 2.
\end{equation}
Using inequality (\ref{eqn:wpupperbound}) and assumption $f'(x) \geq 4$ in (\ref{eqn:fpassumption1}), it follows that 
\begin{eqnarray*}
\bar P w & = & u_0''(x) + f''(x) + a\pset{X, u(X), w'(x)} \\
& \leq & u_0''(x) + f''(x) + \beta (w'(x))^2 \\
& \leq & f''(x) + 4\beta (f'(x))^2 + \frac{\abs{u_0''(x)}}{4} (f'(x))^2 \\
& < & f''(x) + \beta_0 (f'(x))^2
\end{eqnarray*}
for $\beta_0 = 4\beta + \sup \abs{u_0''}$. 

Set 
\[
M = \sup_\Omega \abs{u - u_0} \leq 2C_1.
\]
The ODE 
\[
f'' + \beta_0 (f')^2 = 0
\]
with $f(0) = 0$ and $f(\sigma) = M$ has the solution
\[
f(x) = \frac{1}{\beta_0}\log \pset{1+ \frac{\beta_0}{k_1}x} \quad \text{with}\quad \frac{\beta_0}{k_1}\sigma = e^{M \beta_0} -1.
\]
It follows that 
\[
f'(x) = \frac{1}{k_1 + \beta_0 x} \geq f'(\sigma) = \frac{1}{k_1+ \beta_0 \sigma} = \frac{1}{k_1 e^{M \beta_0}}.
\]
So we choose $k_1>0$ small such that assumption (\ref{eqn:fpassumption1}) is satisfied and then $\sigma$ is determined by the condition $f(\sigma) = M$. 

In summary, the positive constants $M$, $\beta_0$, $k_1$ and $\sigma$ are given by
\begin{equation}
\left\{
\begin{array}{l}
\medskip
M = \sup_\Omega \abs{u - u_0}\leq 2C_1 \\
\medskip
\alpha_1 = 2C_1 + \sup_{[0, D/2]} \abs{a_1(x)} \\
\medskip
\alpha_2 = 2C_1^2 \tan(D/2) + C_1 \sup_{[0,D/2]} \abs{a_2(x)} \\
\medskip
\beta_0 = 4 \max\set{\alpha_1, \alpha_2} + \sup_{[0, D/2]}\abs{u''_0}
\end{array}
\right.
\end{equation}
and
\begin{equation}\label{eqn:k1sigmaupperbarrier}
\left\{
\begin{array}{l}
k_1 = \min \set{\frac{1}{4}e^{-M \beta_0}, \dfrac{e^{-M \beta_0}}{2\sup_{[0, D/2]}\abs{u_0'}}}, \medskip\\
\sigma = \min\set{\dfrac{k_1}{\beta_0}\pset{e^{M\beta_0}-1}, D/2}. \medskip
\end{array}
\right.
\end{equation}
For $w(x) = u_0(x) + f(x)$ with 
\[
f(x) = \frac{1}{\beta_0} \log \pset{1+ \frac{\beta_0}{k_1}x} \quad \text{for} \quad x\in [0, \sigma],
\]
we have
\begin{enumerate}
\item $\bar P w < 0$ and $\bar P(u) = 0$ on $(0,\sigma)\times (0, T)$,
\item $w(x) \geq u_0(x)$ for $x\in [0, \sigma]$,
\item $w\geq u$ on the parabolic boundary of $(0, \sigma) \times(0, T)$,
\item $w(0) = u_0(0)$.
\end{enumerate}
The weak maximum principle of the operator $\bar P$ implies that $w(x) \geq u(x, t)$ for any $(x, t)\in (0, \sigma) \times(0, T)$. This gives the upper barrier $w^+ = w$ as needed for the boundary gradient estimate at $x = 0$. Note that $w^-(x) = u_0(x) - f(x)$ gives the lower barrier. Then the rest of the proof for the boundary $x=0$ follows from \cite[pp. 232-233]{Lieberman}, i.e., we have
\[
\sup_{(y,s)\in \Omega} \frac{\abs{u(0, t) - u(y,s)}}{\abs{(0,t) - (y,s)}} \leq \max\set{L^+, L^-, \frac{M}{\sigma}}
\]
where $L^{\pm}$ are Lipschitz constants of $w^{\pm}$ respectively.

The argument for the boundary $x = D/2$ is similar, and thus we finishes the proof. 
\end{proof}

\begin{rem}\label{rem:boundarygradient}
Lemma \ref{lem:boundarygradestimate} also holds when the initial data $u_0$ is given by the piecewise smooth function $\psi_{k,0}-\tilde\psi$. In fact we choose $\sigma$ in (\ref{eqn:k1sigmaupperbarrier}) not exceeding $\kappa$, where $u_0$ is smooth on $[0, \kappa]$ and $[D/2 - \kappa, D/2]$. Then we have
\begin{equation}\label{eqn:boundarygradientcont}
\sup_{\begin{array}{c}(x,t) \in S\Omega \\ (y,s)\in \Omega \end{array}} \frac{\abs{u(x, t) - u(y,s)}}{\abs{(x,t)-(y,s)}} \leq \tilde C_2
\end{equation}
where $\tilde C_2$ depends on $C_1$ in Lemma \ref{lem:Maxestimate}, the operator $P$, $\sup_{[0, \kappa]}\abs{u_0'}$, $\sup_{[0, \kappa]} \abs{u_0''}$, $\sup_{[D/2-\kappa,D/2]}\abs{u_0'}$, $\sup_{[D/2 - \kappa, D/2]} \abs{u_0''}$, and $\kappa$. 
\end{rem}

Now we give the proof when the initial condition $u_0$ is in the H\"older space $C^{2+\alpha}(\POmega)$.

\begin{proof}[Proof of Theorem \ref{thm:uPDEHolder}] 
The existence of a solution in $C^{2+\alpha}(\Omega)$ follows from Theorems 8.2 and 8.3 in \cite{Lieberman}. First we show the compatibility condition at $X=(0,0)$ and $(D/2,0)$. Since $u(0, t) = 0$ and $u(D/2,t) = 0$ are constants, we have $u_t = 0$ at $X = (0, 0)$ and $(D/2, 0)$. So $P(u) = 0$ at these two points is equivalent to the condition that $u_0(x)$ is a stationary solution near $x = 0$ and $D/2$, which is satisfied by our choice of $u_0$.

Next we derived the a prior estimate $\abs{u}_{1+\gamma}$ for some $\gamma > 0$. Suppose $u$ is a classical solution. Let $\beta_0, \beta_1$ be positive constants  such that 
\begin{equation*}
\abs{a(X, q, p)} \leq \beta_0 p^2 \quad \text{for } \abs{p} \geq \beta_1 \text{ and } (X, q)\in \Omega \times [-C_1, C_1].
\end{equation*}
Then from \cite[Theorem 11.16]{Lieberman}, we have the following global gradient estimate:
\begin{equation}\label{eqn:globalgradestimate}
\sup_\Omega \abs{u_x} \leq \pset{C_2 + \beta_1} \exp\pset{2\beta_0 C_1}
\end{equation}
where $C_1$ and $C_2$ are given in Lemmas \ref{lem:Maxestimate} and \ref{lem:boundarygradestimate}. Now for the global H\"older gradient estimate. Let $\mu_K>0$ be a constant such that 
\begin{equation*}
\abs{a(X, q, p)}\leq \mu_K, \text{ for } X\in \Omega \text{ and } \abs{q}+\abs{p} \leq C_1 + (C_2 + \beta_1)\exp(2\beta_0 C_1).
\end{equation*}
Theorem 12.10 in \cite{Lieberman} implies that, there are positive constants $\gamma$ and $C_3$ determined only by $\alpha$ and $\diam \Omega$, such that 
\begin{equation}\label{eqn:globalHoldergradestimate}
[u_x]_\gamma \leq C_3 \pset{C_1 + (C_2 + \beta_1)\exp(2\beta_0 C_1) +\mu_K + \abs{u_0}_{1+\alpha}}
\end{equation}
These two estimates in (\ref{eqn:globalgradestimate}) and (\ref{eqn:globalHoldergradestimate}) with the maximum estimate in Lemma \ref{lem:Maxestimate} yield
\[
\abs{u}_{1+\gamma} =\sup \abs{u} + \sup \abs{u_x} + [u_x]_{\gamma} \leq M_\gamma
\] 
where 
\[
M_\gamma = C_1 + \pset{C_2 +\beta_1}\exp(2\beta_0 C_1) + C_3 \pset{C_1 + (C_2 + \beta_1)\exp(2\beta_0 C_1) +\mu_K + \abs{u_0}_{1+\alpha}}.
\]
So Theorems 8.2 and 8.3 in \cite{Lieberman} implies that, there exists a solution $u \in C^{2+\alpha}(\Omega)$. Note that $u$ satisfies the linear equation $L u = 0$ with
\begin{equation}\label{eqn:Lbc}
L = - \pd_t + \pd^2_{x} + b(x, u) \pd_x + c(x, u)
\end{equation}
and
\begin{eqnarray*}
b(x, u) & = & 2\tilde\psi(x) - (n+1)\tan(x) + 2u(x, t) \\
c(x, u) & = & 2\tilde\psi'(x) - 4\tan(x) \tilde\psi(x) - (n-1)(1-\tan^2(x)) -2 \tan(x) u(x,t).
\end{eqnarray*}
The maximum estimate in Lemma \ref{lem:Maxestimate} and the global gradient estimate in (\ref{eqn:globalgradestimate}) show that $\abs{u}$ and $\abs{u_x}$ are bounded on $\Omega$. Then both $b(x,u)$ and $c(x,u)$ have bounded norm in $C^\alpha(\Omega)$. So the linear theory, for example, \cite[Theorem 5.14]{Lieberman} implies that $u \in C^{2,1}(\Omega) \cap C(\bar\Omega)$. This finishes the proof.
\end{proof}

\medskip

\section{A heat kernel proof of Lemma \ref{lem:psikt<0} }
\begin{center} by Qi S. Zhang 
	\end{center}

Department of Mathematics, University of California, Riverside, CA, 92521

\medskip

We follow the notions in Section 4 and 5. 

\begin{proof}
Let $G$ be the fundamental solution with Dirichlet boundary condition of the linear operator $L$ in (\ref{eqn:Lw}) on $[0, D/2]$. For $t > s > 0$, 
\begin{equation}\label{eqn:wintegralts}
w(z, t) = \int_0^{D/2} G(z, t, \xi, s) w(\xi, s) d\xi 
\end{equation}
solves the linear equation $L w = 0$ with Dirichlet boundary condition and initial data $w(z, s)$ at $t = s$. 

We claim that the statement in this lemma follows from the inequality $w\leq 0$ on $[0, D/2]\times (0, \infty)$. Assume that $z\in (0, D/2)$ and $t > 0$.  Take a time $s$ with $0<s< t$. Since $G(z, t, \xi, s) > 0$ when $\xi \in (0, D/2)$, it follows that $w(z, t) < 0$, unless $w(\cdot, s)$ is the trivial function on $[0, D/2]$. In the latter case, we have $\psi_{k}(\cdot, s) = \tilde \psi_{k,0}(\cdot)$ for any $s\in (0, t)$ which contradicts the fact the $\psi_k$ has the initial data $\psi_{k,0}$.    

To finish the proof, we need to show that $w\leq 0$. Recall the parabolic equation (\ref{eqn:uPDE}) of $u(z, t)$, and then  the integral formula (\ref{eqn:wintegralts}) yields 
\begin{eqnarray*}
w(z, t) & = & \int_0^{D/2} G(z, t, \xi, s)\big{\{}u''(\xi, s) + 2u'(\xi, s) u(\xi, s) + a_1(\xi) u'(\xi, s) \\
& & \qquad \qquad- 2\tan(\xi) u^2(\xi, s) + a_2(\xi) u(\xi, s) \big{\}} d\xi \\
& = & \int_0^{D/2}  \big{\{}- \pd_\xi G(z, t, \xi, s) u'(\xi, s) - \pd_\xi G(z, t, \xi, s) u^2(\xi, s) \\
& & \qquad \qquad - \pd_\xi \sqset{G(z, t, \xi, s)a_1(\xi)} u(\xi, s) \\
& & \qquad \qquad + G(z, t, \xi, s)\sqset{-2\tan(\xi) u^2(\xi, s) + a_2(\xi) u(\xi, s)}\big{\}} d\xi \\
& = & \int_0^{D/2} \big{\{} \pd^2_\xi G(z, t, \xi, s) u(\xi, s) - \pd_\xi G(z, t, \xi, s) u^2(\xi, s) \\
& & \qquad \qquad - \pd_\xi \sqset{G(z, t, \xi, s)a_1(\xi)} u(\xi, s) \\
& & \qquad \qquad + G(z, t, \xi, s)\sqset{-2\tan(\xi) u^2(\xi, s) + a_2(\xi) u(\xi, s)}\big{\}} d\xi.
\end{eqnarray*} 
The integration-by-parts in the second and third equalities above are justified by the facts that both $G(x, t, \xi, s)$ and $u(\xi, s)$ vanish at $\xi = 0$ and $D/2$, and that both $u(\xi, s)$ and $u'(\xi, s)$ are bounded continuous functions on $(0, D/2)$. It follows that we have
\begin{eqnarray}
w(z, t) & = & \int_0^{D/2}  \big{\{} \pd^2_\xi G(z, t, \xi, s) u(\xi, s) - \pd_\xi G(z, t, \xi, s) \sqset{u^2(\xi, s) + a_1(\xi) u(\xi, s)} \nonumber \\
& &  + G(z, t, \xi, s)\sqset{-2\tan(\xi) u^2(\xi, s) + a_2(\xi) u(\xi, s) - a_1'(\xi) u(\xi, s)}\big{\}} d\xi \label{eqn:wGukintegral}.
\end{eqnarray}
In equation (\ref{eqn:wGukintegral}), fix $t> 0$ and let $s = s_j \To 0$ as $j \To \infty$. Since $G(z, t, \xi, s)$, $\pd_\xi G(z, t, \xi, s)$, $\pd^2_\xi G(z, t, \xi, s)$ are bounded when $t - s \geq \eps_0 > 0$, see e.g. \cite[Section 7]{Aronson}, the other functions  including $u(\xi, t)$ in the integral of equation (\ref{eqn:wGukintegral}) are bounded, and $u\in C\pset{[0, D/2]\times [0, \infty)}$, the Dominated Convergence Theorem implies that 
\begin{eqnarray*}
w(z,t) & = & \int_0^{D/2} \left\{ \eta''(\xi) u(\xi, 0) - \eta'(\xi) \sqset{u^2(\xi, 0) + a_1(\xi) u(\xi, 0)} \right. \\
& & \left. + \eta(\xi) \sqset{-2\tan(\xi) u^2(\xi, 0) + a_2(\xi) u(\xi, 0) - a_1'(\xi) u(\xi, 0)} \right\} d\xi \\
& \leq & 0,
\end{eqnarray*}
where $\eta(\xi) = G(x, t, \xi, 0)\geq 0$. The last inequality above follows from the fact that the initial data $u(z, 0) = \psi_{k,0}(z) - \tilde \psi_{k,0}(z)$ is a supersolution to the parabolic equation (\ref{eqn:uPDE}), and treating $\eta(\xi)$ as a test function. This finishes the proof. 
\end{proof}

\medskip


\end{document}